\documentclass[reqno,10pt]{amsart}
\voffset=-1.75cm \hoffset=-1.75cm \textheight=24.0cm \textwidth=16.2cm
\usepackage{amsmath, amsfonts,amsthm,amssymb,amsbsy,upref,color,graphicx,amscd,hyperref,enumerate}
\usepackage[active]{srcltx}
\usepackage[latin1]{inputenc}
\usepackage{bbm}
\usepackage{mathrsfs} 
\usepackage{xfrac}

\def\ds{\displaystyle}
\def\eps{{\varepsilon}}
\def\N{\mathbb{N}}

\def\Om{\Omega}

\def\R{\mathbb{R}}

\def\HH{\mathcal{H}}

\def\Dr{D}


\def\reg{Reg(\partial\Omega_U)}
\def\sing{Sing_1(\partial\Omega_U)}
\def\2{Sing_2(\partial\Omega_U)}

\newcommand{\be}{\begin{equation}}
\newcommand{\ee}{\end{equation}}

\numberwithin{equation}{section}
\theoremstyle{plain}
\newtheorem{teo}{Theorem}[section]
\newtheorem{lemma}[teo]{Lemma}

\newtheorem{prop}[teo]{Proposition}
\newtheorem{deff}[teo]{Definition}
\newtheorem{oss}[teo]{Remark}

\newcommand{\mean}[1]{\,-\hskip-1.08em\int_{#1}} 
\newcommand{\ind}{\mathbbm{1}}

\title[Regularity for the vectorial Bernoulli problem]{Regularity of the free boundary for the vectorial Bernoulli problem}

\author{Dario Mazzoleni, Susanna Terracini, Bozhidar Velichkov}

\address{Dario Mazzoleni: \newline \indent
Dipartimento di Matematica e Fisica ``N. Tartaglia'',
Universit\`a Cattolica,\newline \indent
Via dei Musei, 41,
25121 Brescia, Italy,} 
\email{dariocesare.mazzoleni@unicatt.it}

\address{Susanna Terracini: \newline \indent
 Dipartimento di Matematica ``Giuseppe Peano'', 
Universit\`a di Torino, \newline \indent
Via Carlo Alberto, 10,
10123 Torino, Italy,} 
\email{susanna.terracini@unito.it}

\address {Bozhidar Velichkov: \newline \indent
Laboratoire Jean Kuntzmann, Universit\'e Grenoble Alpes
\newline \indent
B\^atiment IMAG, 700 Avenue Centrale, 38401 Saint-Martin-d'H\`eres, France 
}
\email{bozhidar.velichkov@univ-grenoble-alpes.fr}

\date{\today}


\begin{document}

\begin{abstract}
In this paper we study the regularity of the free boundary for a vector-valued Bernoulli problem, with no sign assumptions on the boundary data. More precisely, given an open, smooth set of finite measure $D\subset \R^d$, $\Lambda>0$ and $\varphi_i\in H^{\sfrac12}(\partial D)$, we deal with 
\[
\min{\left\{\sum_{i=1}^k\int_D|\nabla v_i|^2+\Lambda\Big|\bigcup_{i=1}^k\{v_i\not=0\}\Big|\;:\;v_i=\varphi_i\;\mbox{on }\partial D\right\}}.
\]
We prove that, for any optimal vector $U=(u_1,\dots, u_k)$, the free boundary $\partial (\cup_{i=1}^k\{u_i\not=0\})\cap D$ is made of a regular part, which is relatively open and locally the graph of a $C^\infty$ function, a (one-phase) singular part, of Hausdorff dimension at most $d-d^*$, for a $d^*\in\{5,6,7\}$, and by a set of branching (two-phase) points, which is relatively closed and of finite $\HH^{d-1}$ measure.
Our arguments are based on the NTA structure of the regular part of the free boundary.
\end{abstract}

\thanks{{\bf Acknowledgments.} 
D.~Mazzoleni and S.~Terracini are partially  supported ERC Advanced Grant 2013 n. 339958
{\it Complex Patterns for Strongly Interacting Dynamical Systems - COMPAT}, by the PRIN-2012-74FYK7 Grant {\it Variational and perturbative aspects of nonlinear differential problems}. B.Velichkov has been partially supported by the LabEx PERSYVAL-Lab (ANR-11-LABX-0025-01) project GeoSpec and the project ANR CoMeDiC}

\maketitle


%

\section{Introduction} 
Free boundary problems arise in models describing several physical phenomena, as for example thermal insulation, and have been an important topic of mathematical study in the last four decades starting from the seminal work~\cite{altcaf}. The huge literature on this topics has provided many new tools, which have been employed also in very different fields.
In two recent papers~\cite{csy,mtv} the authors consider a vector-valued Bernoulli problem, under the assumption that at least one of the components does not change sign. In this paper we give an answer to the main open question from \cite{csy,mtv}, proving the regularity of the free boundary without any assumption on the sign of the components. Our main result is that in a neighborhood of a flat point (that is, a point of Lebesgue density $\sfrac12$) at least one of the components has constant sign. Our analysis strongly relies on the approach and the results from~\cite{mtv}.

\medskip

Given a smooth open set $D\subset \R^d$, $\Lambda>0$ and $\Phi=(\varphi_1,\dots,\varphi_k)\in H^{\sfrac12}(\partial D; \R^k)$, that is $\varphi_i\in H^{\sfrac12}(\partial D)$, for $i=1,\dots,k$, we consider the vectorial free boundary problem  
\begin{equation}\label{vectfb}
\min{\left\{\int_D|\nabla U|^2\,dx+\Lambda\left|\Omega_U\right|\;:\;U\in H^{1}(D; \R^k),\  U=\Phi\;\mbox{on }\partial D\right\}},
\end{equation}
where, for a vector-valued function $U=(u_1,\dots,u_k):D\to\R^k$, we use the notations 
$$|U|:=\sqrt{u_1^2+\dots+u_k^2}\ ,\qquad |\nabla U|^2:=\sum_{i=1}^k|\nabla u_i|^2\qquad\text{and}\qquad \Omega_U:=\{|U|>0\}=\bigcup_{i=1}^k\{u_i\not=0\}\subset D.$$
We will refer to the set $\partial\Omega_U\cap D$ as to the \emph{free boundary} given by $U$.
Our main result is the following:
\begin{teo}\label{regvectfb}
There exists a solution to problem~\eqref{vectfb}.
Any solution $U\in H^1(D;\R^k)$ is Lipschitz continuous in $D\subset\R^d$ and the set $\Omega_U$ has a locally finite perimeter in $D$. The free boundary $\partial\Omega_U\cap D$ is a disjoint union of a regular part $\reg$, a (one-phase) singular set $\sing$ and a set of branching points $\2$. 
\begin{enumerate}[(1)]
\item The regular part $\reg$ is an open subset of $\partial\Omega_U$ and is locally the graph of a $C^\infty$ function.
\item The one-phase singular set $\sing$ consists only of points in which the Lebesgue density of $\Omega_U$ is strictly between $\sfrac12$ and $1$.
Moreover, there is $d^*\in\{5,6,7\}$ such that: 
\begin{itemize}
\item if $d<d^*$, then $\sing$ is empty;
\item if $d=d^*$, then the singular set $\sing$ contains at most a finite number of isolated points;
\item if $d>d^*$, then the $(d-d^\ast)$-dimensional Hausdorff measure of $\sing$ is locally finite in $D$.
\end{itemize}
\item The set of branching points $\2$ is a closed set 
of locally finite $(d-1)$-Hausdorff measure in $D$
and consists only of points in which the Lebesgue density of $\Omega_U$ is $1$ and the blow-up limits are linear functions.
\end{enumerate}
\end{teo}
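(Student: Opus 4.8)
The plan is to follow the by-now classical strategy for Bernoulli-type free boundary problems, adapted to the vectorial setting, and to reduce the genuinely new (two-phase/branching) part to the one-phase analysis of \cite{mtv}. First I would establish \emph{existence} by the direct method: the functional is the sum of a weakly lower semicontinuous Dirichlet term and the measure $|\Omega_U|$, which is lower semicontinuous along sequences converging strongly in $L^1_{loc}$ once one knows a uniform density estimate; alternatively one passes to the relaxed/$\gamma$-convergence formulation as in \cite{mtv}. Next comes \emph{Lipschitz regularity} of any solution $U$: one shows that $|U|$ is (almost) subharmonic where it is positive, derives a nondegeneracy estimate from below (if $\mean{\partial B_r(x_0)}|U|$ is too small then replacing $U$ by its harmonic extension in $B_r$ decreases the energy), and combines the two to get the linear growth $|U(x)|\le C\,\mathrm{dist}(x,\partial\Omega_U)$, hence $U\in C^{0,1}_{loc}$. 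Finite perimeter of $\Omega_U$ follows from the density estimates together with a standard measure-theoretic argument (the reduced boundary has positive lower density and the topological boundary has upper density bounded away from the full one).

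The heart of the matter is the \emph{blow-up analysis and the trichotomy of boundary points}. At a free boundary point $x_0$ one rescales $U_{x_0,r}(x):=\frac1r U(x_0+rx)$; by Lipschitz bounds and nondegeneracy these converge (up to subsequence) to a nonzero global minimizer $U_0$ of the same type, and a Weiss-type monotonicity formula guarantees $U_0$ is one-homogeneous. Classifying $x_0$ by the Lebesgue density of $\Omega_U$: density $\tfrac12$ points form $\reg$; on these one invokes the key new observation announced in the introduction — near a flat point at least one component $u_i$ has a sign — which reduces the situation locally to the \emph{scalar one-phase} Bernoulli problem, so the half-plane solution is the only blow-up and the $\eps$-regularity / improvement-of-flatness theorem of \cite{altcaf, mtv} gives that $\reg$ is locally a $C^{1,\alpha}$, hence (by Schauder bootstrap, the free boundary condition $|\nabla U|=\sqrt\Lambda$ being an analytic overdetermined system) $C^\infty$ graph. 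Points of density strictly between $\tfrac12$ and $1$ with a one-phase (half-plane-type) blow-up constitute $\sing$; by the dimension reduction / Federer argument applied to the one-phase problem, together with the nonexistence of singular one-homogeneous one-phase minimizers below a critical dimension $d^*\in\{5,6,7\}$ (Caffarelli–Jerison–Kenig / De Silva–Jerison), one gets the stated Hausdorff-dimension bound. The remaining points, where $\Omega_U$ has density $1$, are the branching set $\2$; there the blow-up $U_0$ has full support, so each component is harmonic across the hyperplane and one-homogeneous, forcing $U_0$ to be \emph{linear}; the set of such points is closed, and its $\mathcal H^{d-1}$-finiteness is obtained by showing it is contained in the reduced boundary of $\Omega_U$ (equivalently, near a branching point two of the phases behave like the positive and negative parts of a single harmonic function, so one is in the \emph{two-phase} Bernoulli regime of Alt–Caffarelli–Friedman), whose $(d-1)$-measure is locally finite because $\Omega_U$ has locally finite perimeter.

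I expect the main obstacle to be exactly the analysis at the branching points $\2$: proving that the density-one blow-ups are linear (and not, say, a pair of non-opposite half-plane solutions glued along a common boundary), and then upgrading this infinitesimal information to the \emph{quantitative} statement that $\2$ is closed with locally finite $\mathcal H^{d-1}$-measure. This is where the NTA structure of $\reg$, advertised in the abstract, should enter: one uses that $\Omega_U$ is, near the non-branching free boundary, an NTA domain, applies boundary Harnack to ratios of components $u_i/u_j$ to show they extend continuously up to $\reg$, and thereby controls how the free boundary can close up on itself; combined with a clean-up argument separating the "one-phase part" $\reg\cup\sing$ (an open-in-$\partial\Omega_U$ set carrying locally finite $\mathcal H^{d-1}$) from its complement, this yields the perimeter bound on $\2$. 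The secondary technical difficulty is the "one component has a sign near a flat point" claim, which requires ruling out blow-ups that are vectorial in an essential way at density-$\tfrac12$ points — presumably via a Weiss-monotonicity plus Federer-dimension argument showing any one-homogeneous density-$\tfrac12$ minimizer is a half-plane solution in a single direction.
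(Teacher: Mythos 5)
Your overall skeleton (existence by direct method, Lipschitz via quasi-minimality/nondegeneracy, Weiss monotonicity and blow-up trichotomy by Lebesgue density, NTA plus boundary Harnack to extract a constant-sign component near a flat point, Kinderlehrer--Nirenberg bootstrap to $C^\infty$, Federer dimension reduction for $\sing$) matches the paper's strategy, and you correctly identify the constant-sign observation and the branching analysis as the novel points. However, there is a genuine gap in how you handle the $\mathcal H^{d-1}$-finiteness of both $\partial\Omega_U$ and of the branching set $\2$.

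You claim that $\2$ is contained in the reduced boundary $\partial^*\Omega_U$, so that its $\mathcal H^{d-1}$-measure is controlled by the perimeter. This is false: branching points have Lebesgue density $1$ for $\Omega_U$, i.e.\ $|B_r(x_0)\setminus\Omega_U|/|B_r|\to 0$, so they lie in the \emph{measure-theoretic interior} of $\Omega_U$, which is disjoint from $\partial^*\Omega_U$ (a density-$\tfrac12$ set). Finite perimeter therefore gives no information at all on $\mathcal H^{d-1}(\2)$. For the same reason your finite-perimeter argument is broken: you appeal to an upper density bound ("topological boundary has upper density bounded away from the full one"), which is exactly what fails at branching points; the paper's Remark~\ref{finper} (and the remark after \eqref{e:nondeg}) explicitly stresses that no density estimate from above is available here. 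The paper's fix is to prove directly the stronger statement \eqref{e:Hd-1}, $\mathcal H^{d-1}(\partial\Omega_U\cap K)<\infty$, which covers the topological boundary including all density-one points. This is done via Lemma~\ref{l:finite_perimeter_lemma}: one constructs the competitor $V=(1-\phi)U+\phi\,\frac{(|U|-\eps)_+}{|U|}U$ to obtain the energy estimate $\int_{\{0\le |U|\le\eps\}}|\nabla U|^2+\Lambda|\{0\le |U|\le\eps\}|\le C\eps$ for $|U|$ near its zero set, then the coarea inequality yields a uniform $\mathcal H^{d-1}$-bound on a sequence of level sets $\{|U|>\eps_n\}$, and finally a covering argument using non-degeneracy transfers this bound from $\partial\{|U|>\eps_n\}$ to $\partial\Omega_U$ itself. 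Without something of this type your proposal does not establish part (3) of the theorem, nor, strictly speaking, the finite-perimeter claim.

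A secondary, more minor remark: your sketch of why a component has a sign near a flat point ("via a Weiss-monotonicity plus Federer-dimension argument showing any one-homogeneous density-$\tfrac12$ minimizer is a half-plane solution in a single direction") only yields the infinitesimal statement that blow-ups at regular points are of the form $\xi(x\cdot\nu)_+$ (which the paper gets easily from one-homogeneity and Faber--Krahn on the sphere). The actual content of Lemma~\ref{costsign} is to upgrade this to a \emph{neighborhood} statement, and this genuinely requires the boundary Harnack inequality applied on the NTA domain $\Omega_U$ to the harmonic replacements of $u_1^\pm$, together with the connectedness property of Remark~\ref{rem:ntapalleconn}; it is not a consequence of blow-up classification alone.
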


\subsection{Remarks on the one-phase singular set $\sing$}
The critical dimension $d^*$ is the lowest dimension at which the free boundaries of the one-phase scalar Alt-Caffarelli problem (see \cite{altcaf}) admit singularities. Caffarelli, Jerison and Kënig proved in \cite{cjk} that $d^\ast\ge 4$, Jerison and Savin \cite{js} showed that $d^\ast\ge 5$, while De Silva and Jerison \cite{dsj} gave an example of a singular minimal cone in dimension $7$, so $d^*\in\{5,6,7\}$. The first claim of Theorem \ref{regvectfb} (2) follows by the fact that at points of the one-phase singular set $\sing$ the blow-up limits of the minimizers of \eqref{vectfb} are multiples of a solution of the one-phase scalar Alt-Caffarelli problem (Subsection \ref{sub:BUclassification}). The second claim of Theorem \ref{regvectfb} (2) was proved in~\cite[Section~5.5]{mtv} together with the Hausdorff dimension bound 
$$\dim_{\mathcal H}(\sing)\le d-d^\ast,\quad\text{for}\quad d>d^*,$$
which follows by a dimension reduction argument based on the Weiss' monotonicity formula. The claim of Theorem \ref{regvectfb} (2) was proved by Edelen and Engelstein in ~\cite[Theorem~1.15]{ee} by a finer argument based on the quantitative dimension reduction of Naber and Valtorta \cite{nv1,nv2}. We notice that \cite{ee} contains also a stratification result on $\sing$.  

\subsection{Further results on the set of branching points $\2$}
Under the assumption that one of the components of the optimal vector has constant sign \cite{csy,mtv}, all one-homogeneous singular solutions are multiples of global solutions for the one-phase scalar problem. In this case, the singular set of $\partial\Omega_U$ is given precisely by $\sing$. Without the constant sign assumption, the structure of the singular set changes drastically. A set $\2$ of branching points, in which the free boundary may form cusps pointing inwards, might appear. This is natural since the scalar case corresponds to the two-phase Bernoulli problem, for which this is a well-known, though non completely understood, phenomenon. In particular, the dimension of this set of branching points can be as big as the dimension of the regular free boundary. This is somehow natural since for the two-phase case the branching points are contact points of the two level sets $\{u>0\}$ and $\{u<0\}$. 

The free boundaries around branching points for the vectorial problem have more complex structure. Indeed, even in dimension two, true cusps may appear on the free boundary that is, around a branching point $x_0$, the set $B_r(x_0)\cap \Omega_U$ might stay connected, while the Lebesgue density $\frac{|B_r\setminus\Omega_U|}{|B_r|}$ might decay as $r$ goes to zero (see \cite{sv} for an example of such a free boundary).  On the other hand, the nodal set may also degenerate into linear subspace of codimension higher than one (see Lemma \ref{l:lin_sol} for an example of homogeneou solution with a thin nodal set). In Section \ref{s:branch}, using a Federer Reduction Principle, we prove a stratification result, Theorem \ref{t:stratification}, for the branching points, which in particular shows that  the only significant (in terms of Hausdorff measure) set of branching points is the one for which the nodal set degenerates into a $d-1$ dimensional plane. 

\subsection{Relation with shape optimization problems for the eigenvalues of the Dirichlet Laplacian}
The vectorial Bernoulli problem is strictly related to a whole class of shape optimization problems involving the eigenvalues of the Dirichet Laplacian. In particular, suppose that $U^\ast=(u_1^\ast,\dots,u_k^\ast)$ is the vector whose components are the Dirichlet eigenfunctions on the set $\Omega^\ast$, solution of the shape optimization problem 
$$\min\Big\{\sum_{j=1}^k\lambda_j(\Omega)\ :\ \Omega\subset\R^d,\ \Omega\ \text{open},\ |\Omega|=1\Big\}.$$
It was proved in \cite{mtv} that $U^\ast$ is a quasi-minimizer of \eqref{vectfb}. Thus, the regularity of the optimal set $\Omega^\ast$ is strongly related to (not to say a consequence of) the regularity of the free boundaries of the solutions of \eqref{vectfb}. A result for more general functionals was proved by Kriventsov and Lin~\cite{kl}, still under some structural assumption on the free boundary. It was then extended by the same authors to general spectral functionals in ~\cite{kl1}. The shape optimization problem considered in ~\cite{kl1} corresponds to \eqref{vectfb} with sign changing components. On the other hand the nature of the spectral functionals forces the authors to take a very different road and use an approximation with functionals for which the constant sign assumption is automatically satisfied. In particular, they do select a special representative of the optimal set, which roughly speaking corresponds to the biggest {\it quasi-open} set which solves the problem.  The problem \eqref{vectfb} allows a more direct approach and in particular our regularity result holds for the free boundary of \emph{any} optimal vector. 

\subsection{Plan of the paper and sketch of the proof of Theorem \ref{regvectfb}}

Since the existence of an optimal vector is nowadays standard, we start Subsection \ref{sub:lip} by proving the Lipschitz continuity of $U$, which follows by the fact that each component is quasi-minimizer for the scalar Alt-Caffarelli functional and so, by \cite{bmpv}, is Lipschitz continuous. 
In Subsection \ref{finper} we prove that the positivity set $\Omega_U$ has finite perimeter in $D$ and that the $(d-1)$-Hausdorff measure of $\partial\Omega_U$ is finite. Our argument is different from the classical approach of Alt and Caffarelli and is based on a comparison of the energy of the different level sets of $|U|$. 

In Subsection \ref{sub:blowupexistence} we summarize the convergence results on the blow-up sequences and Subsection \ref{sub:BUclassification} is dedicated to the classification of the blow-up limits, which are one-homogeneous global minimizers (that is, globally defined local minimizers) of \eqref{vectfb} (see Remark \ref{r:hm}). In Lemma \ref{l:lin_sol} we show that a new class of global minimizers appears with respect to the problem considered in \cite{csy,mtv}. In Lemma \ref{l:bw} we classify the possible blow-up limits according to the Lebesgue density; this is the main result of the section. Finally, in Definition \ref{def:reg_sing}, we define the sets $\reg$, $\sing$ and $\2$. 

\medskip

In Section \ref{s:reg} we prove the smoothness of $\reg$. In Subsection \ref{sub:visc} we prove that on the one-phase free boundary $\reg\cup\sing$, $U$ satisfies the extremality condition $|\nabla|U||=\sqrt{\Lambda}$ in a viscosity sense.  In Subsection \ref{sub:NTA} we prove that $\reg$ is Reifenberg flat and NTA domain.

Subsection \ref{sub:main} deal with the proof that in a neighborhood of a point $x_0\in\reg$ at least one of the components of $U$ remains strictly positive and (up to a multiplicative constant) controls $|U|$ (see Lemma \ref{costsign}). This is the main result of this Section and the proof is based on the geometric properties of NTA domains and on the Boundary Harnack Principle. 
 In Subsection \ref{sub:C1alpha} and Subsection \ref{sub:Cinfty} we prove that $\reg$ is respectively $C^{1,\alpha}$ and $C^\infty$. The result of  Lemma \ref{costsign} allows us to apply the results from \cite{mtv}. We give the main steps of the proof for the sake of completeness. 

\medskip

Section \ref{s:branch} is dedicated to the study of the set $\2$ of points $x_0\in\partial\Omega_U$ in which all the blow-up limits $U_0\in \mathcal{BU}_U(x_0)$ are linear functions of the form $U_0(x)=Ax$. In Subsection \ref{sub:rank} we prove that the rank of the linear map $U_0$ depends only on the point $x_0$ and we define the $j$th stratum $\mathcal S_j$ as the set of points for which this rank is precisely $j$. In Subsection \ref{sub:dim} we use a dimension reduction argument in the spirit of Federer to prove that the Hausdorff dimension of each stratum $\mathcal S_j$ is $d-j$. Finally, in Subsection \ref{sub:crit} we give a criterion for the uniqueness of the blow-up limits in terms of the Lebesgue density of $\Omega_U$.

\section{Boundary behavior of the solutions}
The existence of an optimal vector $U=(u_1,\dots, u_k)$ is standard and follows by the direct method of the calculus of variations (for more details we refer to \cite{altcaf}). 

\subsection{Lipschitz continuity and non-degeneracy}\label{sub:lip}
{\it Any minimizer $U$ has the following properties:
\begin{enumerate}[(i)]
\item The vector-valued function $U:D\to\R^k$ is locally Lipschitz continuous in $D$. 
\item The real-valued function $|U|$ is non-degenerate, i.e. there are constants $c_0>0$ and $r_0>0$ such that for every $x_0\in \partial\Omega_U\cap D$ and $r\in(0,r_0]$ we have
\begin{equation}\label{e:nondeg}
\Big(\mean{B_r(x_0)}{|U|\,dx}<c_0 r\Big)\Rightarrow \Big(U\equiv 0\ \ \text{in}\ \ B_{r/2}(x_0)\Big).
\end{equation}
\item There are constants $\eps_0,\;r_0$ such that the \emph{lower density estimate} holds: 
\begin{equation}\label{densestbel}
\eps_0|B_r|\le \big|\Omega_U\cap B_r(x_0)\big|,\quad\text{for every}\quad x_0\in \partial\Omega_U\cap D\quad\text{and}\quad r\le r_0.
\end{equation}
\end{enumerate}}

\begin{oss}
\rm Claim {(i)} in particular implies that, {\it for every} minimizer $U$ of \eqref{vectfb}, the set $\Omega_U$ is open. 
\end{oss}

\begin{oss}
\rm It is important to highlight that, unlike the case treated in~\cite{mtv,csy} where it was assumed at least one component $u_i$ to be positive, we cannot hope to have a density estimate \emph{from above} on $\partial\Om_U\cap D$. Actually, we expect a set of branching points (cusps) will come out. Indeed, the case $k=1$ corresponds to a scalar {\it two-phase} problem for which (at least in dimension two) the set $\Omega_U$ is composed of two $C^{1,\alpha}$ sets (see \cite{sv}). At the points of the common boundary of these two sets, the Lebesgue density of $\Omega_U$ is $1$. 
\end{oss}

\begin{proof}[Proof of (i)]
The Lipschitz continuity of each component $u_i$, $i=1,\dots,k$, descends from a quasi-minimality property. Indeed, reasoning as in~\cite[Section~6.2]{mtv}, for every $\tilde u_i:D\to\R$ such that $\tilde u_i-u_i\in H^1_0(D)$ we consider the competitor $\tilde U:=(u_1,\dots,\tilde u_i,\dots,u_k)$. By the optimality of $U$ we have 
$$\int_D |\nabla u_i|^2\,dx+\Lambda\big|\{|U|>0\}\big|\le \int_D |\nabla \tilde u_i|^2\,dx+\Lambda\big|\{|\tilde U|>0\}\big|, $$ 
which implies that each component $u_i$ is a quasi-minimizer of the Dirichlet energy, that is
\begin{equation}\label{quasimindiren}
\int |\nabla u_i|^2\,dx\le \int |\nabla \tilde u_i|^2\,dx+\Lambda |B_r|\quad\text{for every}\quad \tilde u_i\quad \text{such that}\quad \tilde u_i-u_i\in H^1_0(B_r).
\end{equation}
Applying \cite[Theorem~3.3]{bmpv} we get that $u_i$ is Lipschitz continuous in $\Dr$, and since $i=1,\dots,k$ is arbitrary, so is $U$. This concludes the proof of {(i)}.\end{proof} 

\begin{proof}[Proof of (ii) and (iii)]
The non-degeneracy of $|U|$ follows by~\cite[Lemma~2.6]{mtv}, which can be applied since $U$ satisfies the condition~(2.9) therein with $K=0$ and every $\eps>0$. Finally, we notice that the density estimate \emph{from below}  \eqref{densestbel} holds for every Lipschitz function satisfying the non-degeneracy condition \eqref{e:nondeg} (see for example ~\cite[Lemma~2.11]{mtv} or \cite{altcaf}).
\end{proof}

\subsection{Finiteness of the perimeter}\label{finper} {\it For any optimal vector $U\in H^1(D;\R^k)$, solution of~\eqref{vectfb}, the set $\Om_U$ has locally finite perimeter in $D$ and, moreover, }
\begin{equation}\label{e:Hd-1}
\HH^{d-1}(\partial\Omega_U\cap K)<\infty\quad\text{for every compact set}\quad K\subset D.
\end{equation}
\begin{oss}
We notice that the condition \eqref{e:Hd-1} is more general than the finiteness of the perimeter since $\partial\Omega_U$ may contain points $x_0$ which are in the measure theoretic interior of $\Omega_U$ that is, $|B_r(x_0)\setminus\Omega_U|=0$.
\end{oss}
\noindent In order to prove the claim of this Subsection, we will use the following lemma, which holds in general.
\begin{lemma}\label{l:finite_perimeter_lemma}
Suppose that $D\subset\R^d$ is an open set and that $\phi:D\to[0,+\infty]$ is a function in $H^1(D)$ for which there exist $\overline \eps>0$ and $C>0$ such that
\begin{equation}\label{epsest}
\int_{\{0\leq \phi\leq \eps\}\cap D}|\nabla \phi|^2\,dx+\Lambda\big|\{0\leq \phi\leq \eps\}\cap D\big|\leq C \eps\,,\quad\text{for every}\quad 0<\eps\leq \overline\eps.
\end{equation}
Then $P(\{\phi>0\};D)\le C\sqrt\Lambda$.  
\end{lemma}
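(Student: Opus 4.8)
The plan is to use the coarea formula to convert the hypothesis \eqref{epsest} into an estimate on the perimeters of the superlevel sets $\{\phi > t\}$, and then pass to the limit $t \to 0$ using lower semicontinuity of the perimeter. First, I would recall the coarea formula for the Sobolev function $\phi \in H^1(D)$: for a.e.\ $t > 0$ the level set $\{\phi = t\}$ is $(d-1)$-rectifiable, $P(\{\phi > t\}; D) = \HH^{d-1}(\{\phi = t\} \cap D)$, and
\[
\int_{\{0 < \phi < \eps\} \cap D} |\nabla \phi|\, dx = \int_0^\eps P(\{\phi > t\}; D)\, dt .
\]
The strategy is then to bound the left-hand side above by combining the two terms in \eqref{epsest} via the arithmetic-geometric (Young) inequality: pointwise on $\{0 < \phi < \eps\}$ one has $|\nabla \phi| \le \tfrac{1}{2\sqrt\Lambda}|\nabla \phi|^2 + \tfrac{\sqrt\Lambda}{2}$, so integrating and using \eqref{epsest} gives
\[
\int_0^\eps P(\{\phi > t\}; D)\, dt = \int_{\{0 < \phi < \eps\} \cap D} |\nabla \phi|\, dx \le \frac{1}{2\sqrt\Lambda}\Big(\int_{\{0 \le \phi \le \eps\}\cap D} |\nabla \phi|^2\, dx + \Lambda\, |\{0 \le \phi \le \eps\}\cap D|\Big) \le \frac{C\eps}{2\sqrt\Lambda}.
\]

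Dividing by $\eps$ and letting $\eps \to 0$ along a suitable sequence, the mean value of $t \mapsto P(\{\phi > t\}; D)$ over $(0,\eps)$ is at most $\tfrac{C}{2\sqrt\Lambda}$; hence there exists a sequence $t_n \downarrow 0$ with $P(\{\phi > t_n\}; D) \le \tfrac{C}{2\sqrt\Lambda} + o(1)$, and in particular $\liminf_{t \to 0^+} P(\{\phi > t\}; D) \le \tfrac{C}{2\sqrt\Lambda}$. Wait — I should double-check the constant: this actually gives the sharper bound $P(\{\phi>0\};D) \le \tfrac{C}{2\sqrt\Lambda}$, so the claimed bound $C\sqrt\Lambda$ certainly follows once we also control the possible discrepancy between $C\eps$ and the more naive splitting; in any case the proof produces a finite bound, which is what matters, and the stated constant $C\sqrt\Lambda$ is not optimized. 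To finish, I would observe that $\ind_{\{\phi > t_n\}} \to \ind_{\{\phi > 0\}}$ in $L^1_{loc}(D)$ as $t_n \downarrow 0$ (by monotone convergence, since $\{\phi > 0\} = \bigcup_n \{\phi > t_n\}$), and then invoke the lower semicontinuity of the perimeter under $L^1_{loc}$ convergence to conclude
\[
P(\{\phi > 0\}; D) \le \liminf_{n \to \infty} P(\{\phi > t_n\}; D) \le \frac{C}{2\sqrt\Lambda} \le C\sqrt\Lambda ,
\]
where the last inequality holds whenever $\Lambda \ge \tfrac12$; more robustly, one simply keeps the explicit finite constant coming out of the estimate.

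The only genuinely delicate point is the application of the coarea formula: one must ensure that the good representative of $\phi$ is used so that the level sets $\{\phi = t\}$ and the sets of finite perimeter $\{\phi > t\}$ are well defined for a.e.\ $t$, and that $P(\{\phi > t\}; D) = \HH^{d-1}(\partial^*\{\phi > t\} \cap D)$ coincides with $\int_{\{\phi = t\}} \mathcal{H}^{d-1}$ for a.e.\ $t$; this is standard (see e.g.\ the coarea formula for BV/Sobolev functions in Ambrosio--Fusco--Pallara or Evans--Gariepy) but should be stated carefully. A secondary point is that the hypothesis is only assumed for $\eps \le \overline\eps$, which is harmless since we only let $\eps \to 0$. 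Everything else — Young's inequality, monotone convergence, and lower semicontinuity of the perimeter — is routine.
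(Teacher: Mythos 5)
Your proof is correct and follows essentially the same route as the paper: both use the coarea formula to rewrite $\int_{\{0\le\phi\le\eps\}}|\nabla\phi|\,dx$ as $\int_0^\eps \mathcal H^{d-1}(\{\phi=t\})\,dt$, bound that integral in terms of the two quantities appearing in \eqref{epsest}, extract a sequence $t_n\downarrow 0$ of good levels via an averaging argument, and then conclude by lower semicontinuity of the perimeter under $L^1_{loc}$ convergence. The only difference is that you combine the Dirichlet term and the measure term via Young's inequality pointwise, while the paper uses Cauchy--Schwarz on the integrals; this is a cosmetic variation, and both in fact produce a bound of the form $C/\sqrt\Lambda$ (so the constant $C\sqrt\Lambda$ in the statement appears to be a harmless typo).
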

\begin{proof}
By the co-area formula, the Cauchy-Schwarz inequality and \eqref{epsest}, we have that, for every $\eps\le\overline\eps$, 
\begin{align*}
\int_0^\eps \mathcal H^{d-1}\big(\{\phi=t\}\cap D\big)\,dt&=\int_{\{0\leq \phi\leq \eps\}}|\nabla \phi|\,dx\leq \Big(\int_{\{0\leq \phi\leq \eps\}}|\nabla\phi|^2\,dx\Big)^{\sfrac12}\big|\{0\leq \phi\leq \eps\}\big|^{\sfrac12}\leq \eps C\sqrt\Lambda.
\end{align*}
Taking $\eps=1/n$, we get that there is $\delta_n\in[0,1/n]$ such that 
$$\mathcal H^{d-1}\big(\partial^*\{\phi>\delta_n\}\cap D\big)\leq n\int_0^{1/n} \mathcal H^{d-1}\big(\{\phi=t\}\cap D\big)\,dt\le C\sqrt\Lambda.$$
Passing to the limit as $n\to\infty$, we obtain $\mathcal H^{d-1}\big(\partial^*\{\phi>0\}\cap D\big)\leq C\sqrt\Lambda$, which concludes the proof of the lemma.
\end{proof}

\begin{proof}[Proof of the claim of Subsection \ref{finper}.]
We aim to prove an estimate of the form \eqref{epsest} for $\phi=|U|$ by constructing a suitable competitor.
Since we want to prove a local result, we take $x_0\in \partial \Om\cap D$ and $B_r(x_0)\subset D$; moreover we can assume without loss of generality that $x_0=0$ and $r=1$.
Setting $\rho:=|U|$, for every $\eps>0$, we define 
\[
\begin{split}
\widetilde U=(\tilde u_1,\dots,\tilde u_k)&:=\frac{(\rho-\eps)_{+}}{\rho}U\ ,\qquad\text{where}\qquad \widetilde u_i=\left(1-\frac{\eps}{\rho}\right)_{+}u_i\quad\text{for every}\quad i=1,\dots, k,
\end{split}
\]
and, for a smooth function $\phi\in C^{\infty}(D)$ such that $0\le \phi\le 1$ in $D$, $\phi=1$ in $B_{\sfrac12}$ and $\phi=0$ on $D\setminus B_1$,
\[
V=(v_1,\dots, v_k):=(1-\phi)U+\phi\widetilde U=\left\{
\begin{split}
(1-\phi)U,\quad \mbox{if }|U|=\rho<\eps,\\
\left(1-\eps\frac{\phi}{\rho}\right)U,\quad \mbox{if }|U|=\rho\geq \eps.
\end{split}
\right.
\]
Thus, clearly $V$ is an admissible competitor in problem~\eqref{vectfb}.

We observe that the following relations, which we will use in the rest of the proof, hold true:\[
|\nabla \rho\,|\leq |\nabla U|\quad\text{and}\quad \sum_{i=1}^ku_i\nabla u_i=\rho\nabla \rho\quad\text{in}\quad D\,;\quad \frac{\eps\phi}{\rho}\leq 1\quad\mbox{in}\quad \{|U|\geq \eps\}.
\]
We can now compute on $\{|U|\geq \eps\}$ 
\[
\begin{split}
|\nabla V|^2-|\nabla U|^2&=\sum_{i}\Big|\nabla \Big(1-\frac{\eps\phi}{\rho}\Big)u_i\Big|^2-|\nabla u_i|^2\\
&= \Big(-\frac{2\eps\phi}{\rho}+\frac{\eps^2\phi^2}{\rho^2}\Big)|\nabla U|^2+\rho^2\Big|\nabla\frac{\eps\phi}{\rho}\Big|^2-2(\rho-\eps\phi)\nabla\rho\cdot \nabla\frac{\eps\phi}{\rho}\\
&= \eps^2|\nabla \phi|^2-2\eps\nabla \phi\cdot\nabla \rho+\Big(|\nabla \rho|^2-|\nabla U|^2\Big)\Big(2\eps\frac{\phi}{\rho}-\eps^2\frac{\phi^2}{\rho^2}\Big)\leq \eps^2|\nabla \phi|^2-2\eps\nabla \phi\cdot\nabla \rho\leq C_1\,\eps,
\end{split}
\]
where $C_1$ depends only on $\|\nabla\phi\|_{L^\infty}$ and $\|\nabla U\|_{L^\infty}$.
Next, on the set $\{|U|<\eps\},$ we compute
\[
\begin{split}
|\nabla U|^2-|\nabla V|^2&=|\nabla U|^2-|\nabla(1-\phi)U|^2\\
&=(2\phi-\phi^2)|\nabla U|^2+2(1-\phi)U\nabla \phi\cdot\nabla U+|U|^2|\nabla \phi|^2\geq |\nabla U|^2\ind_{B_{\sfrac12}}-C_2\,\eps, 
\end{split}
\]
where again $C_2$  depends only on $\|\nabla\phi\|_{L^\infty}$ and $\|\nabla U\|_{L^\infty}$.
By testing the optimality of $U$ with $V$ we get
\[
\int_{B_1}|\nabla U|^2+\Lambda\big|\{0\leq |U|\leq \eps\}\cap B_1\big|\leq \int_{B_1}|\nabla V|^2+\Lambda\big|\{|V|>0\}\cap B_1\big|,
\]
so we deduce\[
\int_{\{0\leq |U|\leq \eps\}}\Big(|\nabla U|^2-|\nabla V|^2\Big)+\Lambda\big|\{0\leq |U|\leq \eps\}\cap B_{\sfrac12}\big|\leq \int_{\{|U|\geq \eps\}}\Big(|\nabla V|^2-|\nabla U|^2\Big)\leq C_1\eps,
\]
and finally, since $V=0$ on the set $\{0\leq |U|\leq \eps\}\cap\{\phi=1\}$, we get 
\[
\qquad\qquad\int_{\{0\leq |U|\leq \eps\}\cap B_{\sfrac12}}|\nabla U|^2+\Lambda\big|\{0\leq |U|\leq \eps\}\cap B_{\sfrac12}\big|\leq (C_1+C_2)\eps,\qquad\qquad
\]
and, since $|\nabla\rho|\le |\nabla U|$ we obtain the estimate \eqref{epsest} for $\rho=|U|$ in the ball $B_{\sfrac12}$. This proves that $\Omega_U$ has locally finite perimeter in $D$. In order to prove \eqref{e:Hd-1} we notice that Lemma \ref{l:finite_perimeter_lemma} gives the following stronger result: 
{\it There is a sequence $\eps_n\to0$ such that the set $\Omega_n:=\{|U|>\eps_n\}$ is such that $\HH^{d-1}(\partial\Omega_n\cap K)<C$ for some universal constant $C$.} 
In particular, for every $n$ we have that there is a cover $\{B_{\eps_n}(x_i)\}_i$ of $\partial\Omega_n\cap K$ such that
$$C\ge \HH^{d-1}(\partial\Omega_n\cap K)\ge C_d\sum_i \eps_n^{d-1}.$$ 
Now, by the non-degeneracy of $U$ there is another universal constant $C$ such that   the family of balls $\{B_{C\eps_n}(x_i)\}_i$ is a cover also for $\partial \Omega_U\cap K$. Since $n$ is arbitrary and the constants are universal, we get the claim.
\end{proof}

\subsection{Compactness and convergence of the blow-up sequences}\label{sub:blowupexistence}

Let $U:D\to\R^k$ be a solution of \eqref{vectfb} or, more generally, a Lipschitz function. For $r\in(0,1)$ and $x\in\R^d$ such that $U(x)=0$, we define 
$$\ds U_{r,x}(y):=\frac1r U(x+ry).$$
When $x=0$ we will use the notation $U_r:=U_{r,0}$.

Suppose now that $(r_n)_{n\ge0}\subset\R^+$ and $(x_n)_{n\ge0}\subset D$ are two sequences such that
\begin{equation}\label{rnxn}
\lim_{n\to\infty}r_n=0,\qquad \lim_{n\to\infty}x_n=x_0\in D,\qquad B_{r_n}(x_n)\subset D\quad\text{and}\quad x_n\in\partial\{|U|>0\}\quad\text{for every}\quad n\ge0.
\end{equation}
Then the sequence $\{U_{r_n,x_n}\}_{n\in\N}$ is uniformly Lipschitz and locally uniformly bounded in $\R^d$. Thus, up to a subsequence, $U_{r_n,x_n}$ converges, as $n\to\infty$, locally uniformly to a Lipschitz continuous function $U_0:\R^d\to\R^k$. Moreover, if $U$ is a minimizer of~\eqref{vectfb}, then for every $R>0$ the following properties hold (see~\cite[Proposition~4.5]{mtv}): 
{\it\begin{enumerate}[(i)]
\item $\ds U_{r_n,x_n}$ converges to $U_0$ strongly in $H^1(B_R;\R^k)$. 
\item The sequence of characteristic functions $\ind_{\Omega_n}$ converges in $L^1(B_R)$ to $\ind_{\Omega_0}$, where 
$$\Omega_n:=\{|U_{r_n}|>0\}\quad\text{and}\quad \Omega_0:=\{|U_0|>0\}.$$
\item The sequences of closed sets $\overline\Omega_n$ and $\Omega_n^c$ converge Hausdorff in $B_R$ respectively to $\overline\Omega_0$ and $\Omega_0^c$.
\item $U_0$ is non-degenerate at zero, that is, there is a dimensional constant $c_d>0$ such that 
$$\|U_0\|_{L^\infty(B_r)}\ge c_d\,r\quad\text{for every}\quad r>0.$$
\end{enumerate}}

\begin{deff}\rm
Let $U:\R^d\to\R^k$ be a Lipschitz function, $r_n$ and $x_n$ be two sequences satisfying \eqref{rnxn}. 
We say that the sequence $U_{r_n,x_n}$ is a {\it blow-up sequence with variable center} (or a {\it pseudo-blow-up}).  
If the sequence $x_n$ is constant, $x_n=x_0$ for every $n\ge 0$, we say that $U_{r_n,x_0}$ is a {\it blow-up sequence with fixed center}. We denote by $\mathcal{BU}_U(x_0)$ the space of all the limits of blow-up sequences with fixed center $x_0$.
\end{deff}

\subsection{Classification of the blow-up limits}\label{sub:BUclassification} In this section we prove that for any $x_0\in\partial\Omega_U\cap D$ the blow-up limits $U_0\in \mathcal{BU}_U(x_0)$ have one of the following forms: 
\begin{itemize}
\item {\it Multiples of a scalar solution of the one-phase problem}, that is there is a one-homogeneous non-negative global minimizer $u:\R^d\to\R^+$ of the one-phase Alt-Caffarelli functional $$\ds\mathcal F(u)=\int|\nabla u|^2\,dx+\Lambda\big|\{u>0\}\big|,$$ such that
\begin{equation}\label{e:one_phase_bw}
U_0(x)=\xi\, u(x),\quad\text{where}\quad \xi\in\R^k\quad\text{and}\quad |\xi|=1.
\end{equation}
\item {\it Linear functions}, that is there is a matrix $A=(a_{ij})_{ij}\in \mathcal{M}_{d\times k}(\R)$ such that
\begin{equation}\label{e:harmonic_bw}
U_0(x)=Ax.
\end{equation}
\end{itemize}
It was shown in \cite{mtv} that every function of the form \eqref{e:one_phase_bw} is a global solution of \eqref{vectfb}. 
In the following lemma we classify the linear solutions. 
\begin{lemma}\label{l:lin_sol}
Let $u:\R^d\to\R^k$ be a linear function, $u(x)=Ax$ with $A=(a_{ij})_{ij}\in M_{d\times k}(\R)$. If $$\ \|A\|:=\ds\sum_{i=1}^k\sum_{j=1}^d a_{ij}^2\ge \Lambda,$$ 
then $u$ is a solution of \eqref{vectfb} in the unit ball $B_1$. Moreover, if $\text{rank}\,A=1$, then the condition $\|A\|\ge \Lambda$ is also necessary.
\end{lemma}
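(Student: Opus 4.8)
The plan is to test the optimality of $u(x) = Ax$ against an arbitrary competitor $v$ with $v = u$ on $\partial B_1$, and to exploit the fact that a linear function is harmonic, hence energy-minimizing for the Dirichlet part among all functions with the same boundary data. First I would observe that, since each component $u_i(x) = \sum_j a_{ij}x_j$ is harmonic in $B_1$, for any competitor $V = (v_1,\dots,v_k) \in H^1(B_1;\R^k)$ with $V = u$ on $\partial B_1$ we have $\int_{B_1}|\nabla u_i|^2 \le \int_{B_1}|\nabla v_i|^2$, and summing over $i$ gives $\int_{B_1}|\nabla u|^2 \le \int_{B_1}|\nabla V|^2$. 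On the other hand, since $A \neq 0$ (as $\|A\| \ge \Lambda > 0$), the zero set $\{u = 0\} = \ker A^T$ viewed in $\R^d$ has Lebesgue measure zero, so $\Omega_u = \{|U|>0\}$ is $B_1$ up to a null set and $|\Omega_u \cap B_1| = |B_1|$. Therefore the energy of $u$ is $\int_{B_1}|\nabla u|^2 + \Lambda|B_1|$, and for any competitor the energy is at least $\int_{B_1}|\nabla V|^2 + \Lambda|\Omega_V \cap B_1| \ge \int_{B_1}|\nabla u|^2 + \Lambda|\Omega_V\cap B_1|$; so to conclude optimality it would suffice to show that the possible loss in measure term, namely $\Lambda(|B_1| - |\Omega_V \cap B_1|)$, is always compensated by the gain $\int_{B_1}(|\nabla V|^2 - |\nabla u|^2)$ in Dirichlet energy. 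This is the heart of the matter.

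To quantify this, I would use the set $\omega := \{V = 0\} \cap B_1 \subset B_1 \setminus \Omega_V$ and write the Dirichlet gain using the harmonicity of $u$: testing the equation $\Delta u_i = 0$ against $v_i - u_i$ (which vanishes on $\partial B_1$) gives $\int_{B_1}\nabla u_i \cdot \nabla(v_i - u_i) = 0$, hence $\int_{B_1}|\nabla v_i|^2 - |\nabla u_i|^2 = \int_{B_1}|\nabla(v_i - u_i)|^2$. Summing, $\int_{B_1}|\nabla V|^2 - |\nabla u|^2 = \int_{B_1}|\nabla(V - u)|^2$. Now on $\omega$ we have $V = 0$, so $\nabla(V-u) = -\nabla u = -A^T$ there (in the sense that the gradient of $V-u$ equals $-\nabla u$ a.e. on the interior of $\omega$, and more carefully one uses that $V-u = -u$ on $\omega$), so $\int_{B_1}|\nabla(V-u)|^2 \ge \int_{\omega}|\nabla u|^2 = \|A\|\,|\omega|$ — here I use that $|\nabla u|^2 = \sum_{i,j}a_{ij}^2 = \|A\|$ is constant. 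Since $|B_1| - |\Omega_V \cap B_1| \le |\omega|$ is not quite right (the complement of $\Omega_V$ may be larger than $\{V=0\}$ only up to the boundary; in fact $B_1 \setminus \Omega_V = \{|V| = 0\} = \omega$ exactly), we get $\Lambda(|B_1| - |\Omega_V\cap B_1|) = \Lambda|\omega| \le \|A\|\,|\omega| \le \int_{B_1}(|\nabla V|^2 - |\nabla u|^2)$ precisely when $\|A\| \ge \Lambda$. This closes the minimality direction.

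For the necessity when $\mathrm{rank}\,A = 1$, write $A^T x = (b\cdot x)\,\xi$ with $b \in \R^d$, $\xi \in \R^k$, $|\xi| = 1$, and $|b|^2 = \|A\|$; the competitor idea is to push the function to zero on a thin slab around the hyperplane $\{b \cdot x = 0\}$, i.e. replace $u$ by $\max(|b\cdot x| - \delta, 0)$ in the $b$-direction (times $\xi$, suitably cut off near $\partial B_1$), compute that the Dirichlet energy decreases by roughly $|b|^2 \cdot (\text{slab volume})$ while the measure term increases by $\Lambda \cdot(\text{slab volume})$, and send $\delta \to 0$: if $\|A\| = |b|^2 < \Lambda$ this strictly lowers the energy, contradicting minimality. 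The main obstacle I anticipate is the careful bookkeeping in this necessity argument — in particular handling the cutoff near $\partial B_1$ so that the competitor still matches the boundary data while keeping the leading-order energy computation clean — but since only the leading order in $\delta$ matters, the error terms from the cutoff region are lower order and the comparison goes through. The minimality direction, by contrast, is essentially the short harmonic-comparison computation above and should present no real difficulty.
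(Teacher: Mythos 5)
Your proposal is correct, and for the minimality direction it takes a genuinely different route from the paper. The paper's proof decomposes each component as $u_j=\alpha_j v_j$ with $v_j(x)=x\cdot\nu_j$, $|\nu_j|=1$, invokes the (nontrivial, known) fact that the half-plane solutions $(v_j)_\pm$ are global minimizers of the one-phase scalar Alt--Caffarelli functional, applies that minimality to $(\tilde v_j)_+$ and $(\tilde v_j)_-$ separately for each $j$, and sums the resulting inequalities weighted by $\alpha_j^2=\sum_i a_{ji}^2$. Your argument is purely variational and more self-contained: harmonicity of $u$ plus $V=u$ on $\partial B_1$ gives $\int_{B_1}|\nabla V|^2-|\nabla u|^2=\int_{B_1}|\nabla(V-u)|^2$, and the standard Sobolev fact $\nabla V=0$ a.e.\ on $\{V=0\}$ yields $\int_{B_1}|\nabla(V-u)|^2\ge\int_{\{V=0\}}|\nabla u|^2=\|A\|\,|B_1\setminus\Omega_V|$, which dominates the measure gain $\Lambda\,|B_1\setminus\Omega_V|$ exactly when $\|A\|\ge\Lambda$. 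This avoids the per-component splitting and does not require the one-phase minimality of half-planes as an external input, so it is arguably the cleaner argument. On the necessity side, the paper's proof actually breaks off after restating the setup and never gives the construction, so there is nothing to compare against; your thin-slab competitor is the natural way to finish it. Two small points of hygiene there: use the signed truncation $\mathrm{sign}(b\cdot x)\max(|b\cdot x|-\delta,0)\,\xi$ rather than $\max(|b\cdot x|-\delta,0)\,\xi$ so that the competitor is a genuine perturbation of $u$ before the cutoff, and take the cutoff annulus to have width comparable to $\delta$ (say $\eta=\delta$) so that the annular error is $O(\delta^2)$ and genuinely subleading against the order-$\delta$ bulk gain $(\Lambda-\|A\|)\cdot 2\omega_{d-1}\delta$. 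Also a minor slip: $\{u=0\}=\ker A$, not $\ker A^T$, though of course either is a proper subspace of $\R^d$ once $A\neq 0$.
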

\begin{proof}
Let us first show that if $\ \ds\|A\|\ge \Lambda$, then $u=:(u_1,\dots,u_k)$ is as solution of \eqref{vectfb}. Let $\tilde u=(\tilde u_1,\dots,\tilde u_k):B_1\to\R^d$ be such that $\tilde u=u$ on $\partial B_1$. We will show that $\tilde u$ has a higher energy than $u$. Notice that each component $u_j$, $j=1,\dots,k$, can be written as $u_j(x)=\alpha_j\,v_j(x)$, where $\alpha_j\in\R$ and $v_j(x)=x\cdot\nu_j$ for some $\nu_j\in\partial B_1$. We will also write $\tilde u_j(x)=\alpha_j \tilde v_j(x)$ and we notice that $\tilde v_j=v_j$ on $\partial B_1$. Now since $(v_j)_+$ and $(v_j)_-$ are solutions of the one-phase scalar Alt-Caffarelli problem we have that 
\begin{align*}
\int_{B_1}|\nabla v_j|^2\,dx+|B_1|&=\int_{B_1}|\nabla (v_j)_+|^2\,dx+|\{v_j>0\}\cap B_1|+\int_{B_1}|\nabla (v_j)_-|^2\,dx+|\{v_j<0\}\cap B_1|\\
&\le\int_{B_1}|\nabla (\tilde v_j)_+|^2\,dx+|\{\tilde v_j>0\}\cap B_1|+\int_{B_1}|\nabla (\tilde v_j)_-|^2\,dx+|\{\tilde v_j<0\}\cap B_1|\\
&\le \int_{B_1}|\nabla \tilde v_j|^2\,dx+|\Omega_{\tilde u}\cap B_1|.
\end{align*}  
Multiplying by $\alpha_j^2$, taking the sum over $j$, and using that $\ds \|A\|=\sum_{j=1}^k\alpha_j^2$, we obtain 
\begin{align*}
\int_{B_1}|\nabla u|^2\,dx+\|A\|\,|B_1|&=\sum_{j=1}^k\alpha_j^2\left(\int_{B_1}|\nabla v_j|^2\,dx+|B_1|\right)\\
&\le \sum_{j=1}^k\alpha_j^2\left(\int_{B_1}|\nabla \tilde v_j|^2\,dx+|\Omega_{\tilde u}\cap B_1|\right)= \int_{B_1}|\nabla \tilde u|^2\,dx+\|A\|\,|\Omega_{\tilde u}\cap B_1|.
\end{align*}  
Now since $\Lambda\le \|A\|$, we have
\begin{align*}
\int_{B_1}|\nabla u|^2\,dx+\Lambda\,|B_1|\le  \int_{B_1}|\nabla \tilde u|^2\,dx+\Lambda\,|\Omega_{\tilde u}\cap B_1|.
\end{align*}  

We will now prove that if $\text{rank}\,A=1$ and $\|A\|< \Lambda$, then $u$ is not a solution of \eqref{vectfb}. Indeed, let $u=(u_1,\dots,u_k)$ be as above: $u_j(x)=\alpha_j\, x\cdot\nu_j$ for some $\nu_j\in\partial B_1$. \end{proof}
The classification of the blow-up limits strongly relies on the monotonicity of the vectorial Weiss' boundary adjusted energy introduced in \cite{mtv} 
\begin{equation}\label{weiss_fun}
W(U,x_0,r):=\frac{1}{r^{d}}\left(\int_{B_r(x_0)}|\nabla U|^2\,dx+\Lambda\big|\{|U|>0\}\cap B_r(x_0)\big|\right)-\frac{1}{r^{d+1}}\int_{\partial B_r(x_0)}|U|^2\,d\HH^{d-1},
\end{equation}
which turns out to be monotone in $r$. Precisely, by \cite[Proposition~3.1]{mtv} we have the following estimate. 
\begin{lemma}[Weiss monotonicity formula]\label{mono_weiss} Let $U=(u_1,\dots,u_k)$ be a minimizer for problem~\eqref{vectfb} and $x_0\in \partial\Omega_U\cap D$. Then, the function $r\mapsto W(U,x_0,r)$ is non-decreasing and 
\begin{equation}\label{e:derivataW}
\frac{d}{dr}W(U,x_0,r)\geq\frac{1}{r^{d+2}}\sum_{i=1}^k\int_{\partial B_r(x_0)} |(x-x_0)\cdot\nabla u_i-u_i|^2\,d\HH^{d-1}(x),
\end{equation}
in particular, the limit $\ds \lim_{r\to 0^+}W(U,x_0,r)$ exists and is finite. 
\end{lemma}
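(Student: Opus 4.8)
The plan is to run the classical Weiss-type computation, whose only non-algebraic ingredient is a comparison of $U$ inside each ball with its one-homogeneous extension. I would normalize $x_0=0$, abbreviate $W(r):=W(U,0,r)$, and introduce the ``bulk'' and ``boundary'' quantities
\[
E(r):=\int_{B_r}|\nabla U|^2\,dx+\Lambda\big|\Omega_U\cap B_r\big|,\qquad F(r):=\int_{\partial B_r}|U|^2\,d\HH^{d-1},
\]
so that $W(r)=r^{-d}E(r)-r^{-d-1}F(r)$. Since $U$ is Lipschitz by Subsection~\ref{sub:lip}, $\nabla U\in L^\infty_{loc}$ and, by the co-area formula, for a.e.\ $r<\mathrm{dist}(0,\partial D)$ the restrictions of $U$ and $\nabla U$ to $\partial B_r$ lie in $L^2(\partial B_r;\R^k)$. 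For such $r$ I would compute $E'(r)=\int_{\partial B_r}\big(|\nabla U|^2+\Lambda\ind_{\Omega_U}\big)\,d\HH^{d-1}$ and, decomposing $\nabla U$ on $\partial B_r$ into its radial part $\partial_\nu U$ and tangential part $\nabla_\theta U$ (so that $|\nabla U|^2=|\partial_\nu U|^2+|\nabla_\theta U|^2$), $F'(r)=\tfrac{d-1}{r}F(r)+2\int_{\partial B_r}U\cdot\partial_\nu U\,d\HH^{d-1}$.

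The crucial step is to extract from minimality an inequality bounding $E(r)$ by boundary quantities. For a.e.\ $r$ — namely those with $U|_{\partial B_r}\in H^1(\partial B_r;\R^k)$ — I would take as competitor the function $V$ that agrees with $U$ on $D\setminus B_r$ and equals the one-homogeneous extension $V(x)=\tfrac{|x|}{r}U(r x/|x|)$ in $B_r$; this $V$ is admissible in~\eqref{vectfb}. Since for a one-homogeneous function $|\nabla V|^2=|\partial_\nu V|^2+|\nabla_\theta V|^2$ is constant along each ray, and $\Omega_V\cap B_r$ is the cone over $\Omega_U\cap\partial B_r$, a short computation gives
\[
\int_{B_r}|\nabla V|^2\,dx=\frac1d\Big(\frac1rF(r)+r\int_{\partial B_r}|\nabla_\theta U|^2\,d\HH^{d-1}\Big),\qquad \Lambda\big|\Omega_V\cap B_r\big|=\frac{\Lambda r}{d}\HH^{d-1}\big(\Omega_U\cap\partial B_r\big).
\]
Inserting these into $E(r)\le\int_{B_r}|\nabla V|^2+\Lambda|\Omega_V\cap B_r|$ and rewriting $\int_{\partial B_r}|\nabla_\theta U|^2+\Lambda\HH^{d-1}(\Omega_U\cap\partial B_r)=E'(r)-\int_{\partial B_r}|\partial_\nu U|^2$ yields the basic inequality $d\,E(r)\le\tfrac1rF(r)+r\,E'(r)-r\int_{\partial B_r}|\partial_\nu U|^2\,d\HH^{d-1}$.

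With this in hand the rest is algebra: I differentiate $W(r)=r^{-d}E(r)-r^{-d-1}F(r)$, substitute first the expression for $F'(r)$ and then the basic inequality into the term $-d\,r^{-d-1}E(r)$ (whose coefficient is negative); the contributions involving $E$, $E'$ and $F$ cancel and one is left with
\[
W'(r)\ \ge\ r^{-d-2}\!\int_{\partial B_r}|U|^2-2r^{-d-1}\!\int_{\partial B_r}U\cdot\partial_\nu U+r^{-d}\!\int_{\partial B_r}|\partial_\nu U|^2=r^{-d}\!\int_{\partial B_r}\Big|\partial_\nu U-\frac Ur\Big|^2\,d\HH^{d-1}.
\]
Using $r\,\partial_\nu u_i=(x-x_0)\cdot\nabla u_i$ on $\partial B_r(x_0)$, the right-hand side is exactly $r^{-d-2}\sum_i\int_{\partial B_r(x_0)}|(x-x_0)\cdot\nabla u_i-u_i|^2\,d\HH^{d-1}$, i.e.~\eqref{e:derivataW}, so $W$ is non-decreasing. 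Boundedness from below is immediate: the bulk part of $W$ is non-negative, while the Lipschitz bound $|U(x)|\le L|x-x_0|$ (valid since $U(x_0)=0$) gives $r^{-d-1}F(r)\le L^2\HH^{d-1}(\partial B_1)$; hence $\lim_{r\to0^+}W(r)=\inf_{r>0}W(r)$ exists and is finite. I expect the only genuine care to be needed in the ``for a.e.\ $r$'' justifications — differentiability of $E$ and $F$, the $H^1$-regularity of the traces on $\partial B_r$, and the admissibility and the two energy identities for $V$ — all of which are routine once one invokes the Lipschitz regularity of $U$ from Subsection~\ref{sub:lip}, so that the co-area formula applies and $U|_{\partial B_r}\in H^1$ for a.e.\ $r$.
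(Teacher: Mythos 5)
Your proof is correct, and it is precisely the classical Weiss computation with the one-homogeneous competitor that underlies \cite[Proposition~3.1]{mtv}, which is the result the paper simply cites rather than reproving. Every step checks out: the formulas for $E'$ and $F'$, the energy of the one-homogeneous extension $V$ (both the Dirichlet part $\tfrac{1}{d}(\tfrac1r F(r)+r\int_{\partial B_r}|\nabla_\theta U|^2)$ and the volume part $\tfrac{\Lambda r}{d}\HH^{d-1}(\Omega_U\cap\partial B_r)$), the resulting basic inequality $dE(r)\le \tfrac1rF(r)+rE'(r)-r\int_{\partial B_r}|\partial_\nu U|^2$, the algebraic cancellation leading to $W'(r)\ge r^{-d}\int_{\partial B_r}|\partial_\nu U-U/r|^2$, and the two-sided bound $0\le r^{-d}E(r)$ and $r^{-d-1}F(r)\le L^2\HH^{d-1}(\partial B_1)$ that gives finiteness of the limit. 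The only technical point worth making explicit — which you already flag — is that the Lipschitz continuity of $U$ established in Subsection~\ref{sub:lip} guarantees $U|_{\partial B_r}\in H^1(\partial B_r;\R^k)$ (indeed for every $r$, not just a.e.~$r$), so that the competitor $V$ is admissible in $H^1(D;\R^k)$ and the co-area differentiation of $E$ and $F$ is justified.
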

\begin{oss}[Homogeneity and minimality of the blow-up limits]\label{r:hm} \rm
As a consequence of the monotonicity formula, we obtain that if $U$ is a solution of \eqref{vectfb}, $x_0\in\partial\Omega_U\cap D$ and $U_0\in\mathcal{BU}_U(x_0)$, then $U_0$ is a one-homogeneous global solution of the vectorial Bernoulli problem. Precisely, the fact that $U_0$ is a global solution  follows by \cite[Proposition 4.2]{mtv}, while for the homogeneity of $U_0$ we use the fact that $U_0$ is a blow-up limit,  $U_0=\lim_{n\to\infty}U_{r_n,x_0}$, and the scaling property of the Weiss energy
$$W(U,x_0,rs)=W(U_{r,x_0},s,0)\quad\text{for every}\quad r,s>0,$$ 
which gives that the function $s\mapsto W(U_0,s,0)$ is constant. In fact, for every $s>0$, we have
$$W(U_0,s,0)=\lim_{n\to \infty}W(U_{r_n,x_0},s,0)=\lim_{n\to \infty}W(U,r_ns,x_0)=\lim_{r\to 0}W(U,r,x_0).$$
Now,  the homogeneity of $U_0$ follows by  \eqref{e:derivataW} applied to $U_0$ and its components.
\end{oss}
\begin{oss}[Lebesgue and energy density]\label{rem:density} \rm Keeping the notation from Remark \ref{r:hm}, we notice that the homogeneity of the blow-up limits and the strong convergence of the blow-up sequences gives  
$$W(U_0,1,0)=\Lambda\big|\{|U_0|>0\}\cap B_1\big|=\lim_{r\to 0}W(U,r,x_0)=\Lambda\omega_d \,\lim_{r\to 0}\frac{|\Omega_U\cap B_r(x_0)|}{|B_r|},$$
for every $U_0\in \mathcal{BU}_U(x_0)$. That is, the energy density $\lim_{r\to 0}W(U,r,x_0)$ coincides, up to a multiplicative constant, with the Lebesgue density, which (as a consequence) exists in every point $x_0$ of the free boundary. In particular, we get 
$$\Omega_U^{(\gamma)}=\left\{x\in\partial\Omega_U\;:\;\lim_{r\rightarrow 0}\frac{|\Omega_U\cap B_r(x)|}{|B_r|}=\gamma\right\}=\Big\{x_0\in\partial\Omega_U\ :\ \lim_{r\to0}W(U,x_0,r)=\Lambda\omega_d \gamma\Big\}.$$
\end{oss}

\begin{lemma}[Structure of the blow-up limits]\label{l:bw}
Let $U$ be a solution of~\eqref{vectfb}, $x_0\in \partial\Omega_U\cap D$. Then, there is a dimensional constant $0<\delta<\sfrac12$ such that precisely one of the following holds: 
\begin{enumerate}[(i)]
\item The Lebesgue density of $\Omega_U$ at $x_0$ is $\sfrac12$ and every blow-up $U_0\in \mathcal{BU}_U(x_0)$ is of the form 
\begin{equation}\label{e:flat_bw}
U_0(x)=\xi (x\cdot\nu)_+\quad\text{where}\quad\xi\in\R^k,\ |\xi|=\sqrt\Lambda,\ \nu\in\R^d,\  |\nu|=1.
\end{equation}
\item The Lebesgue density of $\Omega_U$ at $x_0$ satisfies 
$$\sfrac12+\delta\le \lim_{r\to0}\frac{|\Omega_U\cap B_r(x_0)|}{|B_r|}\le 1-\delta,$$
and every blow-up in $\mathcal{BU}_U(x_0)$ is a one-phase blow-up of the form \eqref{e:one_phase_bw} with singularity in zero. 
\item The Lebesgue density of $\Omega_U$ at $x_0$ is $1$ and every blow-up  in $\mathcal{BU}_U(x_0)$ is of the form \eqref{e:harmonic_bw}. 
\end{enumerate}
\end{lemma}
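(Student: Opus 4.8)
The plan is to reduce the statement to a fact about one-homogeneous global minimizers, to extract the lower density bound and the rigidity at density $\sfrac12$ from the subharmonicity of $|U_0|$, and to obtain the spectral gap $\delta$ from a compactness argument for the scalar one-phase problem. First I fix $U_0\in\mathcal{BU}_U(x_0)$; by Remark~\ref{r:hm} it is a one-homogeneous global minimizer of~\eqref{vectfb}, non-degenerate at the origin by the results recalled in Subsection~\ref{sub:blowupexistence}, and by Remark~\ref{rem:density} one has $\Lambda\,|\{|U_0|>0\}\cap B_1|=W(U_0,1,0)=\Lambda\,\omega_d\gamma$, with $\gamma:=\lim_{r\to0}|\Omega_U\cap B_r(x_0)|/|B_r|$ depending only on $x_0$. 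Hence it is enough to determine the shape of $U_0$ as a function of $\gamma$: because $\gamma$ is attached to $x_0$, this forces all blow-ups at $x_0$ into the same alternative and makes the three cases mutually exclusive. Next I would note that on $\Omega_0=\{|U_0|>0\}$ each component $u_i$ is harmonic (interior minimality), whence a Cauchy--Schwarz computation shows $|U_0|$ is non-negative and subharmonic on $\R^d$; writing $|U_0|(r\theta)=r\,g(\theta)$, subharmonicity is equivalent to $-\Delta_{S^{d-1}}g\le(d-1)g$ on $\{g>0\}$, so the first Dirichlet eigenvalue of the spherical set $\{g>0\}$ is $\le d-1$, and the Faber--Krahn inequality on the sphere yields $\mathcal H^{d-1}(\{g>0\})\ge\tfrac12\mathcal H^{d-1}(S^{d-1})$, i.e. $\gamma\ge\sfrac12$, with equality only if $\{g>0\}$ is a half-sphere. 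In the equality case $\Omega_0$ is a half-space $\{x\cdot\nu>0\}$, each $u_i$ is a one-homogeneous harmonic function vanishing on $\partial\Omega_0$, hence $u_i=\alpha_i(x\cdot\nu)_+$, and the viscosity extremality condition $|\nabla|U_0||=\sqrt\Lambda$ on the free boundary (Subsection~\ref{sub:visc}) gives $\sum_i\alpha_i^2=\Lambda$: this is precisely~\eqref{e:flat_bw}, case (i).

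For the other values of $\gamma$ I would either import, or reprove along the lines of~\cite{mtv,csy} and Lemma~\ref{l:lin_sol}, the dichotomy that a one-homogeneous global minimizer is a multiple $\xi\,u$ of a scalar one-phase minimizer or a linear map. If $\gamma=1$, then $\Omega_0^c$ is a closed cone of zero Lebesgue measure; excluding (via~\eqref{densestbel} for $U_0$ together with a secondary blow-up at a point of $\partial\Omega_0$) that $\partial\Omega_0$ carries positive $\mathcal H^{d-1}$-measure, $\Omega_0^c$ has empty interior and vanishing $W^{1,2}$-capacity, so each Lipschitz $u_i$ is harmonic across $\partial\Omega_0$, hence harmonic on $\R^d$, hence linear by one-homogeneity, $U_0(x)=Ax$ with $A\neq 0$: this is~\eqref{e:harmonic_bw}, case (iii). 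If $\gamma<1$, then $\Omega_0$ has a non-empty reduced boundary and, by the density bound of the first paragraph applied to each connected component, cannot split, so $\Omega_0$ is connected; at a reduced-boundary point a blow-up of $U_0$ is a half-plane vectorial solution $\xi(x\cdot\nu)_+$ with $|\xi|=\sqrt\Lambda$, and combining the boundary regularity on the regular part of $\partial\Omega_0$ with a unique continuation argument across the connected set $\Omega_0$ one obtains $U_0=\xi\,|U_0|$ for a fixed unit vector $\xi$, with $|U_0|$ a one-homogeneous global minimizer of the scalar Alt--Caffarelli functional, i.e.~\eqref{e:one_phase_bw}; the classical upper density estimate for that problem then gives a dimensional $c_d\in(0,\sfrac12)$ with $\gamma\le 1-c_d<1$, and $\gamma=\sfrac12$ happens exactly when $|U_0|$ is a half-plane solution, already covered by case (i), so when $\gamma>\sfrac12$ the point $0$ is a genuine one-phase singularity of $|U_0|$.

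It remains to manufacture the gap $\delta$, which I expect to be, together with the local-to-global factorization of the previous paragraph, the main technical obstacle. Arguing by contradiction, suppose there are one-homogeneous scalar one-phase minimizers $u_n$ with $|\{u_n>0\}\cap B_1|\searrow\tfrac12\omega_d$; by the uniform Lipschitz bound and non-degeneracy a subsequence converges strongly in $H^1_{loc}$ and locally uniformly to a one-homogeneous scalar one-phase minimizer $u_\infty$ with $|\{u_\infty>0\}\cap B_1|=\tfrac12\omega_d$, which by the rigidity of the scalar Weiss energy at the level $\Lambda\omega_d/2$ is a half-plane solution; hence $\partial\{u_n>0\}$ becomes arbitrarily flat in $B_{1/2}$, the $\eps$-regularity (flatness) theorem for the scalar one-phase problem makes it $C^{1,\alpha}$ near the origin, and a one-homogeneous function with smooth free boundary through $0$ is itself a half-plane solution, contradicting $|\{u_n>0\}\cap B_1|>\tfrac12\omega_d$. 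Taking $\delta$ to be the minimum of the resulting dimensional constant and of $c_d$, and collecting the cases $\gamma=\sfrac12$, $\sfrac12+\delta\le\gamma\le 1-\delta$, and $\gamma=1$, yields the trichotomy as stated. The subtle points are the passage from the local half-plane behavior at reduced-boundary points to the global identity $U_0=\xi\,|U_0|$ (and, symmetrically, the exclusion of density-one non-linear cones in the case $\gamma=1$), and the compactness/rigidity input producing the gap.
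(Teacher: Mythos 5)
Your case (i) and the final compactness argument for the gap $\delta$ run along essentially the same lines as the paper: one-homogeneity makes $|U_0|$ a subsolution of the eigenvalue problem at level $d-1$ on the sphere, Faber--Krahn gives $\gamma\ge\sfrac12$ with half-sphere rigidity at equality, and the quantitative gap is extracted by a compactness/improvement-of-flatness argument for the scalar one-phase problem. (The paper phrases the spherical step through the eigenfunction property of the individual components $u_i^\pm$ rather than through the subharmonicity of $|U_0|$, but this is the same mechanism.) The real issues are in cases (ii) and (iii).

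In case (iii) your removability step is not justified. That $|\Omega_{U_0}^c|=0$ does not give zero $W^{1,2}$-capacity (a hyperplane already has zero Lebesgue measure and positive capacity); the lower density estimate \eqref{densestbel} bounds $\Omega_{U_0}$ from below and says nothing about the thinness of its complement or of $\partial\Omega_{U_0}$; and even $\mathcal H^{d-1}(\partial\Omega_{U_0})=0$ is far from zero capacity, which requires control at the $\mathcal H^{d-2}$ scale. None of this machinery is needed: since $|\Omega_{U_0}^c\cap B_1|=0$, the measure term $\Lambda|\Omega_V\cap B_1|$ is already maximal and cannot decrease for any competitor $V$, so minimality of $U_0$ reduces to minimality of the Dirichlet energy alone; thus $U_0$ is harmonic in $B_1$ and is linear by one-homogeneity, which is exactly the paper's Step~2. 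In case (ii) your chain ``flat blow-up at a reduced-boundary point $\Rightarrow$ regularity of the regular part of $\partial\Omega_{U_0}$ $\Rightarrow$ unique continuation $\Rightarrow U_0=\xi|U_0|$'' is circular: the regularity of the regular part, and in particular the constant-sign component needed for the boundary Harnack step, are established in Section~\ref{s:reg} precisely as consequences of the present lemma. Moreover the boundary Harnack only yields that $u_i/u_1$ is H\"older up to $\partial\Omega_{U_0}$, not constant, so by itself it does not produce $U_0=\xi|U_0|$. The paper's Step~3 avoids both problems by staying on the sphere: if some $u_i$ changed sign, both $\{u_i^\pm>0\}\cap S^{d-1}$ would have first Dirichlet eigenvalue $d-1$ and hence each occupy at least half the sphere by Faber--Krahn, forcing $\gamma=1$; so for $\gamma<1$ no component changes sign, and the nontrivial components are first eigenfunctions on $\Omega_{U_0}\cap S^{d-1}$ and therefore mutually proportional, which gives $U_0=\xi|U_0|$ directly without invoking any free boundary regularity for $U_0$.
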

\begin{proof} Let $x_0\in\partial\Omega_U\cap D$. \\
{\it Step 1.} The following claim holds true: 
$$x_0\in \Omega_U^{(\sfrac12)}\Leftrightarrow \text{there is $U_0\in \mathcal{BU}_U(x_0)$ of the form \eqref{e:flat_bw}} \Leftrightarrow \text{every $U_0\in \mathcal{BU}_U(x_0)$ is of the form \eqref{e:flat_bw}}.$$
Indeed, if one blow-up is of the form \eqref{e:flat_bw}, then by Remark \ref{rem:density} $x_0\in\Omega^{(\sfrac12)}$. On the other, hand, if $x_0\in \Omega^{(\sfrac12)}$ and $U_0\in \mathcal{BU}_U(x_0)$, then again by Remark \ref{rem:density} $|\Omega_{U_0}\cap B_1|=\frac12|B_1|$. The homogeneity of $U_0$ and the fact that $\Delta U_0=0$ on $\Omega_{U_0}$ imply that each component of $U_0$ is an eigenfunction on the sphere corresponding to the eigenvalue $(d-1)$. By the Faber-Krahn inequality on the sphere we get that, up to a rotation, $\Omega_{U_0}=\{x_d>0\}$ and all the components of $U_0$ are multiples of $x_d^+$, that is $U_0(x)=\xi\,x_d^+$ for some $\xi \in\R^k$. Let $\phi$ be a compactly supported function and let $\widetilde U_0=\xi\, (x_d^++\phi)$. Testing the optimality of $U_0$ against $\widetilde U_0$, it is immediate to check (see \cite{mtv}) that $|\xi|\, x_d^+$ is a global minimizer of the one-phase Alt-Caffarelli functional. Thus, an internal perturbation (see \cite{altcaf}) gives $|\xi|=\sqrt{\Lambda}$ and concludes {\it Step 1}. \\
{\it Step 2.} The following claim holds true: 
$$x_0\in \Omega_U^{(1)}\Leftrightarrow \text{there is $U_0\in \mathcal{BU}_U(x_0)$ is of the form \eqref{e:harmonic_bw}}\Leftrightarrow \text{every $U_0\in \mathcal{BU}_U(x_0)$ is of the form \eqref{e:harmonic_bw}}.$$
Indeed, if one blow-up $U_0\in \mathcal{BU}_U(x_0)$ is of the form \eqref{e:harmonic_bw}, then by Remark \ref{rem:density} $x_0\in \Omega_U^{(1)}$. On the other hand, if $x_0\in \Omega_U^{(1)}$, then still by Remark \ref{rem:density} $|U_0\cap B_1|=|B_1|$ and so, the minimality of $U_0$ implies that $U_0$ is harmonic in $B_1$. Now the homogeneity of $U_0$ implies that it is a linear function, $U_0(x)=Ax$, for some matrix $A=(a_{ij})_{ij}$. \\
{\it Step 3.} Finally, suppose that $x_0\in (\partial\Omega_U\cap D)\setminus (\Omega_U^{(\sfrac12)}\cup\Omega_U^{(1)})$ and let $x_0\in\Omega_U^{(\gamma)}$ for some $\gamma\in (0,\sfrac12)\cup(\sfrac12,1)$. Let $U_0=(u_1,\dots,u_k)\in\mathcal{BU}_U(x_0)$. Then each component $u_i$ is $1$-homogeneous and the functions $u_i^+$ and $u_i^-$ are eigenfunctions corresponding to the eigenvalue $d-1$ on the spherical sets $\{u_i>0\}\cap \partial B_1$ and $\{u_i<0\}\cap \partial B_1$. Now since the density $\gamma<1$, we get that at least one of the sets is empty. Thus, none of the components $u_i$ change sign and they are all multiples of the first eigenfunction on the set $\Omega_{U_0}\cap \partial B_1$, that is $U_0=\xi |U_0|$ for some $\xi\in\R^k$. Now, reasoning as in~\cite[Section~5.2]{mtv}, we get that $|\xi|=\Lambda$ and that $|U_0|$ is a global solution of the one-phase scalar functional $\ \ds u\mapsto\int|\nabla u|^2\,dx+|\{u>0\}|$. In particular, the density estimate for the one-phase Alt-Caffarelli functional implies that $\gamma<1-\delta$ for some dimensional constant $\delta>0$. Now, the fact that the first eigenvalue on $\Omega_{U_0}\cap \partial B_1$ is $(d-1)$ implies that $\gamma\ge \sfrac12$. As in \cite[Section~5.2]{mtv}, the improvement of flatness for the scalar problem now implies that $\gamma>\sfrac12+\delta$, which concludes the proof. 
\end{proof}

\begin{deff}\label{def:reg_sing}
Let $x_0\in\partial\Omega_U$. We say that: 

$\bullet$ $x_0$ is a regular point, $x_0\in \reg$, if (i) holds;

$\bullet$ $x_0$ is a (one-phase) singular point, $x_0\in \sing$, if (ii) holds;

$\bullet$ $x_0$ is a branching point, $x_0\in \2$, if (iii) holds.

\end{deff}
\noindent In view of Lemma \ref{l:bw} we have that 
$$\reg=\Om_U^{(\sfrac12)} \cap D,\quad \2=\Om_U^{(1)}\cap \partial \Om_U\cap D,$$
$$\quad \sing=(\partial \Om_U\cap D)\setminus ( \2 \cup \reg).$$

\begin{lemma}\label{l:open_closed}
$\2$ is a closed set and $\reg$ is an open subset of $\partial\Omega_U$.
\end{lemma}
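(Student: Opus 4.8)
The plan is to prove the two statements separately, using the characterization of the three sets in terms of the Lebesgue density of $\Omega_U$ together with the fact (from Remark \ref{rem:density}) that this density equals, up to the constant $\Lambda\omega_d$, the limit $\lim_{r\to0}W(U,x_0,r)$, which exists at every free boundary point.

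\textbf{Closedness of $\2$.} Recall $\2=\Omega_U^{(1)}\cap\partial\Omega_U\cap D$. Suppose $x_n\in\2$ and $x_n\to x_0\in D$; I must show $x_0\in\2$. First, $x_0\in\partial\Omega_U$: since $x_0\in\overline{\partial\Omega_U}$ and $\partial\Omega_U$ is closed in $D$, this is automatic. Next I want the density of $\Omega_U$ at $x_0$ to be $1$. The natural tool is the upper semicontinuity of the map $x\mapsto\lim_{r\to0}W(U,x,r)$ with respect to $x$ on the free boundary, which follows from the Weiss monotonicity formula (Lemma \ref{mono_weiss}): the map $x\mapsto W(U,x,r)$ is continuous for each fixed $r$ (the integrals over $B_r(x)$ and $\partial B_r(x)$ depend continuously on the center, using that $U\in H^1$ and that $|\Omega_U\cap\partial B_r(x)|=0$ for a.e.\ $r$), and $\lim_{r\to0}W(U,x,r)=\inf_{r>0}W(U,x,r)$ by monotonicity, so the limit is an infimum of continuous functions, hence upper semicontinuous in $x$. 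Therefore
\[
\Lambda\omega_d\cdot(\text{density at }x_0)=\lim_{r\to0}W(U,x_0,r)\ge\limsup_{n}\lim_{r\to0}W(U,x_n,r)=\Lambda\omega_d\cdot 1.
\]
Wait — upper semicontinuity gives the reversed inequality; let me be careful: u.s.c.\ means $\lim_{r\to0}W(U,x_0,r)\ge\limsup_n\lim_{r\to0}W(U,x_n,r)$ is \emph{false} in general; rather $\limsup_n\le$ value at $x_0$ fails and instead $\liminf$... Actually the correct statement is: if $g$ is u.s.c.\ and $x_n\to x_0$ then $\limsup_n g(x_n)\le g(x_0)$. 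Here $g(x):=\lim_{r\to0}W(U,x,r)$, so $g(x_0)\ge\limsup_n g(x_n)=\Lambda\omega_d$. On the other hand the density of $\Omega_U$ is always $\le 1$, so $g(x_0)\le\Lambda\omega_d$. Hence $g(x_0)=\Lambda\omega_d$, i.e.\ $x_0\in\Omega_U^{(1)}$, so $x_0\in\2$. Thus $\2$ is closed in $D$.

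\textbf{Openness of $\reg$.} Recall $\reg=\Omega_U^{(\sfrac12)}\cap D$, and by Lemma \ref{l:bw} the density at free boundary points takes values in $\{\sfrac12\}\cup[\sfrac12+\delta,1-\delta]\cup\{1\}$, a set whose only isolated point from below is $\sfrac12$. Let $x_0\in\reg$, so $g(x_0)=\tfrac12\Lambda\omega_d$. By the same upper semicontinuity, for $x$ near $x_0$ on the free boundary, $g(x)\le g(x_0)+\eta$ for any $\eta>0$; choosing $\eta<\delta\Lambda\omega_d$ forces $g(x)<(\tfrac12+\delta)\Lambda\omega_d$, and since the density cannot lie strictly between $\sfrac12$ and $\sfrac12+\delta$, it must equal $\sfrac12$. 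Hence a whole neighborhood of $x_0$ in $\partial\Omega_U$ consists of regular points, i.e.\ $\reg$ is relatively open in $\partial\Omega_U$. (Equivalently, one can argue by the gap in the attainable Weiss densities together with the Hausdorff convergence of $\Omega_n^c$ and $\overline{\Omega}_n$ in item (iii) of Subsection \ref{sub:blowupexistence} to rule out a sequence of non-regular points converging to $x_0$.)

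\textbf{Main obstacle.} The only non-formal point is establishing the upper semicontinuity of $x\mapsto\lim_{r\to0}W(U,x,r)$, i.e.\ the joint continuity of $(x,r)\mapsto W(U,x,r)$ for $r>0$ fixed and the interchange with the infimum. Continuity of the bulk term $\frac1{r^d}\int_{B_r(x)}|\nabla U|^2$ in $x$ is clear; continuity of $\frac1{r^d}\Lambda|\Omega_U\cap B_r(x)|$ uses that $\partial\Omega_U$ has Lebesgue measure zero (which follows from the finite-perimeter/Hausdorff bound in Subsection \ref{finper}); and continuity of the boundary term $\frac1{r^{d+1}}\int_{\partial B_r(x)}|U|^2$ in $x$ follows from $U\in C(D)$. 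Once this is in hand, both claims of the lemma are immediate consequences of the density gap in Lemma \ref{l:bw}.
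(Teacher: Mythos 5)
Your argument is correct and follows essentially the same route as the paper: you establish upper semicontinuity of $x\mapsto\lim_{r\to0}W(U,x,r)$ from the Weiss monotonicity (limit equals infimum of the continuous-in-$x$ functions $W(U,\cdot,r)$), deduce closedness of $\2$ from the maximality of the Weiss density there, and deduce openness of $\reg$ from the density gap in Lemma~\ref{l:bw}, exactly as the paper does by invoking the argument of~\cite[Proposition~5.6]{mtv}. One cosmetic remark: your mid-proof ``wait'' and the claim that u.s.c. ``gives the reversed inequality'' is a false alarm — the displayed inequality $g(x_0)\ge\limsup_n g(x_n)$ is precisely what upper semicontinuity says, so your original line was already right and the self-correction merely circles back to it.
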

\begin{proof}
We first notice that the function $W(U,x_0,0):=\lim_{r\to0^+}W(U,x_0,r)$ is upper semi-continuous in $x_0$. This follows by the fact that $(x_0,r)\mapsto W(U,x_0,r)$ is increasing in $r>0$ and continuous in $x_0$. Thus, the first part of the claim follows since in the points $x_0\in\2$ the density $W(U,x_0,0)$ is maximal. The second part of the claim follows by the lower density gap from Lemma \ref{l:bw} (2) and the  argument of \cite[Proposition 5.6]{mtv}. 
\end{proof}
\section{Regularity of the one-phase free boundary} \label{s:reg}
Following the argument from \cite{mtv}, we first deduce the optimality condition on the free boundary in a viscosity sense, then we notice that $\reg$ is open and Reifenberg flat. Next we show that around every point of $\reg$ at least one of the components of the optimal vector $U$ has a constant sign. Thus we fall into the framework of \cite{mtv} and can concude the proof by using the boundary Harnack principle in NTA domains and the regularity of the one-phase free boundaries for the scalar problem.
Finally, thanks to Lemma~\ref{costsign}, we can apply the arguments of~\cite[Section~5]{mtv} in order to obtain the $C^\infty$ regularity of $\reg$, using the component of locally constant sign provided by Lemma~\ref{costsign} instead of $u_1$ in the boundary Harnack principle~\cite[Lemma~5.12]{mtv}. We recall here the updated statements for the reader's sake.
\subsection{The stationarity condition on the free boundary}\label{sub:visc}
It is well-known (see for example \cite{altcaf}) that if $u$ is a local minimizer of the Alt-Caffarelli functional 
$$H^1_{loc}(\R^d)\ni u\mapsto\mathcal F(u)=\int|\nabla u|^2\,dx+\Lambda|\{u>0\}|,$$
and the boundary $\partial\{u>0\}$ is smooth, then 
$|\nabla u|=\sqrt{\Lambda}$ on $\partial \{u>0\}. $
There are various ways to state this optimality for free boundaries that are not a priori smooth (see for example \cite{altcaf}, \cite{desilva} and the references therein). In the case of vector-valued functionals, we use the notion of viscosity solution from ~\cite{mtv}. 

\begin{deff}\label{viscoptimality}\rm
Let $\Omega\subset\R^d$ be an open set. We say that the continuous function $U=(u_1,\dots,u_k):\overline\Omega\to\R^k$ is a viscosity solution of the problem 
$$-\Delta U=0\quad\text{in}\quad\Omega,\qquad U=0\quad\text{on}\quad\partial\Omega\cap D,\qquad |\nabla |U||=\sqrt \Lambda\quad\text{on}\quad\partial\Omega\cap D,$$
if for every $i=1,\dots,k$ the component $u_i$ is a solution of the PDE
$$-\Delta u_i=0\quad\text{in}\quad\Omega,\qquad u_i=0\quad\text{on}\quad\partial\Omega\cap D,$$
and the boundary condition
$|\nabla |U||=\sqrt \Lambda\quad\text{on}\quad\partial\Omega\cap D,$
holds in viscosity sense, that is

$\bullet$ for every continuous $\varphi:\R^d\to\R$, differentiable in $x_0\in\partial\Omega\cap D$ and such that ``$\varphi$ touches $|U|$ from below in $x_0$'' (that is $|U|-\varphi:\overline\Omega\to\R$ has a local minimum equal to zero in $x_0$), we have $|\nabla \varphi|(x_0)\le\sqrt\Lambda$.

$\bullet$ for every function $\varphi:\R^d\to\R$, differentiable in $x_0\in\partial\Omega\cap D$ and such that ``$\varphi$ touches $|U|$ from above in $x_0$'' (that is $|U|-\varphi :\overline\Omega\to\R$ has a local maximum equal to zero in $x_0$), we have $|\nabla \varphi|(x_0)\ge\sqrt\Lambda$.
\end{deff}
\begin{lemma}\label{optvisc}
Let $U$ be a minimizer for~\eqref{vectfb} and $x_0\in\reg\cup\sing$. Then, there is $r>0$ such that $U$ is a viscosity solution of 
\begin{equation}\label{viscsolUtion}
-\Delta U=0\quad\text{in}\quad\Omega_U\cap B_r(x_0),\quad U=0\quad\text{on}\quad\partial\Omega_U\cap B_r(x_0),\quad  |\nabla |U||=\sqrt{\Lambda}\quad\text{on}\quad\partial\Omega_U\cap B_r(x_0).
\end{equation}
\end{lemma}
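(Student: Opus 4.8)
The plan is to treat separately the two ``interior'' statements in \eqref{viscsolUtion} and the free-boundary condition $|\nabla|U||=\sqrt\Lambda$, and to reduce the latter -- by a blow-up -- to the classical fact that minimizers of the \emph{scalar} one-phase Alt-Caffarelli functional are viscosity solutions. The interior part is immediate: since $\Omega_U$ is open and $\Omega_U=\{|U|>0\}$, continuity of $U$ gives $U=0$ on $\partial\Omega_U\cap B_r(x_0)$, and for the harmonicity, given $\psi\in C^\infty_c(\Omega_U;\R^k)$ and $|t|$ small the competitor $U+t\psi$ has $\{|U+t\psi|>0\}\subseteq\Omega_U$, so its measure term does not exceed $\Lambda|\Omega_U|$; testing the minimality of $U$ against it leaves $\int|\nabla U|^2\le\int|\nabla(U+t\psi)|^2$, and differentiating at $t=0$ yields $\int_{\Omega_U}\nabla U\cdot\nabla\psi\,dx=0$, i.e. $\Delta u_i=0$ in $\Omega_U$ for every $i$. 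Finally, the radius $r$ is chosen so that $\partial\Omega_U\cap B_r(x_0)\subseteq\reg\cup\sing$, which is possible because $\2$ is closed (Lemma \ref{l:open_closed}) and $\reg\cup\sing=(\partial\Omega_U\cap D)\setminus\2$.

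\noindent\textbf{Reduction to the scalar one-phase condition.} The heart of the matter is a pointwise statement at an arbitrary $\bar x\in\partial\Omega_U\cap B_r(x_0)$; by the choice of $r$ we have $\bar x\in\reg\cup\sing$, so Lemma \ref{l:bw} applies: the Lebesgue density of $\Omega_U$ at $\bar x$ is $<1$, and every $U_0\in\mathcal{BU}_U(\bar x)$ can be written $U_0=\xi\,\rho_0$ with $\rho_0:=|U_0|$ a one-homogeneous global minimizer of $\mathcal F(v)=\int|\nabla v|^2+\Lambda|\{v>0\}|$ -- this is exactly \eqref{e:one_phase_bw} in case (ii), while in case (i) $\rho_0=\sqrt\Lambda\,(x\cdot\nu)_+$ is the half-space minimizer of $\mathcal F$ by the normalization in \eqref{e:flat_bw}; moreover $\rho_0$ vanishes at $0$ and is non-degenerate there, by Subsection \ref{sub:blowupexistence}. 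Now suppose $\varphi$ is differentiable at $\bar x$ and touches $|U|$ from below at $\bar x$ in the sense of Definition \ref{viscoptimality}, and assume for contradiction $\nabla\varphi(\bar x)=a\,e$ with $a>\sqrt\Lambda$, $|e|=1$. The standard perturbation step (cf. \cite{desilva} and \cite[Section~5]{mtv}) reduces to the case in which the half-space function $a\,((x-\bar x)\cdot e)_+$ lies below $|U|$ near $\bar x$ with $\{(x-\bar x)\cdot e>0\}\subseteq\Omega_U$; rescaling by $r_n\to0$ and passing to a blow-up $|U_{r_n,\bar x}|\to\rho_0$, this survives in the limit and gives $\rho_0(y)\ge a\,(y\cdot e)_+$ on $\R^d$, so $a\,(y\cdot e)_+$ touches $\rho_0$ from below at the free-boundary point $0$. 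Since $\rho_0$ minimizes the scalar one-phase functional, hence is a viscosity solution of $|\nabla\rho_0|=\sqrt\Lambda$ on $\partial\{\rho_0>0\}$ (see \cite{altcaf,desilva}), this forces $a\le\sqrt\Lambda$, a contradiction. When $\bar x\in\reg$ this can also be seen by hand: $\Omega_{U_0}$ has density exactly $\sfrac12$ and contains $\{y\cdot e>0\}$, hence equals it up to a null set, so $\rho_0=c\,(y\cdot e)_+$; the internal variation of \cite{altcaf} gives $c=\sqrt\Lambda$, and $\rho_0\ge a\,(y\cdot e)_+$ forces $a\le c$. The case of $\varphi$ touching $|U|$ from above is symmetric: such a $\varphi$ must have $\nabla\varphi(\bar x)\neq0$, for otherwise $|U|$ would vanish faster than linearly at $\bar x$, against the non-degeneracy \eqref{e:nondeg}; writing $\nabla\varphi(\bar x)=b\,e$ and assuming $b<\sqrt\Lambda$, the same blow-up produces a half-space function $b\,(y\cdot e)_+\ge\rho_0$ touching $\rho_0$ from above at $0$, again contradicting that $\rho_0$ is a scalar viscosity solution. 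Hence the viscosity conditions hold at $\bar x$, and since $\bar x$ was arbitrary in $\partial\Omega_U\cap B_r(x_0)$, the lemma follows.

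\noindent\textbf{Main obstacle.} The genuinely delicate point is not the scalar input, which is classical, but the blow-up step: one must justify replacing a test function that is merely differentiable at $\bar x$ by a bona fide half-space comparison function with the correct inner-normal direction, and check that the one-sided contact persists under the locally uniform convergence $|U_{r_n,\bar x}|\to|U_0|$ together with the Hausdorff convergence of the positivity sets from Subsection \ref{sub:blowupexistence}. This is carried out in detail in \cite[Section~5]{mtv}; the only new ingredient needed here is the classification of Lemma \ref{l:bw}, which guarantees that the blow-ups are of one-phase type -- and hence that this argument goes through -- at \emph{every} point of the enlarged one-phase free boundary $\reg\cup\sing$, not just at $\reg$ as in \cite{mtv}. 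The branching set $\2$ must be excluded precisely because there the blow-ups are linear and $\Omega_{U_0}$ may be all of $\R^d$, so that $0$ is not a free-boundary point of $U_0$ and no slope condition can be read off.
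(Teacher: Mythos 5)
Your proposal follows essentially the same route as the paper: exclude $\2$ by choosing $r$ small (using that $\2$ is closed), then at a point $\bar x\in\reg\cup\sing$ blow up, invoke the classification of Lemma \ref{l:bw} (the density being $<1$ forces $U_0=\xi\rho_0$ with $\rho_0$ a scalar one-phase minimizer), and read the viscosity inequalities off the scalar blow-up exactly as in \cite[Lemma~5.2]{mtv}. You flesh out the interior harmonicity, the choice of $r$, the non-degeneracy argument ruling out $\nabla\varphi(\bar x)=0$ in the from-above case, and the half-space comparison more explicitly than the paper does, but the decomposition, the key lemma, and the appeal to the scalar viscosity theory are identical.
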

\begin{proof}
Suppose that $\varphi$ touches $|U|$ from above in $y_0\in B_r(x_0)$. Then $|\varphi(y_0)|\ge \Lambda$ precisely as in~\cite[Lemma~5.2]{mtv}. If $\varphi$ touches $|U|$ from below in $y_0$, then every blow-up $U_0\in \mathcal{BU}_{U}(y_0)$ is a one-homogeneous global minimizer of \eqref{vectfb} such that $\Omega_{U_0}$ contains the half-space $\{x\,:\,\nabla\varphi(y_0)\cdot x<0\}$. Now since the Lebesgue density of $\Omega_{U_0}$ is strictly smaller than one, the argument of~\cite[Lemma~5.2]{mtv} gives that all the components of $U_0$ must be multiples of the same global minimizer of the scalar one-phase Alt-Caffarelli problem. Thus $\Omega_{U_0}=\{x\,:\,\nabla\varphi(y_0)\cdot x<0\}$ and the conclusion follows as in~\cite[Lemma~5.2]{mtv}. 
\end{proof}
\subsection{Reifenberg flat and NTA domains}\label{sub:NTA}
In this section we briefly recall the basic geometric properties of the Reifenberg flat and NTA domains. The Reifenberg flatness of $Reg(\partial\Omega_U)$ follows preciesly as in \cite{mtv}. Then a result by Kenig and Toro~\cite{kt1} shows that it is also NTA. In the next section we will use the NTA property to prove regularity. For more details on the properties and the structure of the Reifenberg flat domains we refer to~\cite{kt1}, while NTA domains were studied in~\cite{kt1,jk}.

\begin{deff}[Reifenberg flat domains]
Let $\Omega\subset\R^d$ be an open set and let $0<\delta<\sfrac12$, $R>0$. We say that $\Omega$ is a $(\delta,R)$-Reifenberg flat domain if:
\begin{enumerate}
\item For every $x\in\partial\Omega$ and every $0<r\le R$ there is a hyperplane $H=H_{x,r}$ containing $x$ such that 
$$\text{dist}_{\mathcal H}(B_r(x)\cap H,B_r(x)\cap\partial\Omega)<r \delta. $$
\item For every $x\in\partial\Omega$, one of the connected components of the open set $B_R(x)\cap\{x\ :\ \text{dist}(x,H_{x,R})>2\delta R\}$ is contained in $\Omega$, while the other one is contained in $\R^d\setminus\overline\Omega$.
\end{enumerate}
\end{deff}

\begin{teo}[Reifenberg flat implies NTA, {\cite[Theorem 3.1]{kt1}}]\label{reifimplnta}
There exists a $\delta_0>0$ such that if $\Om\subset \R^d$ is a $(\delta, R)$-Reifenberg flat 
domain for $\delta <\delta_0$, then it is NTA, that is there exist constants  $M>0$ and $r_0>0$ (called NTA constants) such that 
\begin{enumerate}
\item $\Omega$ satisfies the \emph{corkscrew condition}, that is, given $x\in \partial \Om$ and $r\in(0,r_0)$, there exists $x_0\in\Om$ s.t. \[
M^{-1}r<dist(x_0,\partial \Om)<|x-x_0|<r,
\]
\item $\R^d\setminus \Omega$ satisfies the corkscrew condition,
\item If $w\in \partial \Om$ and $w_1,w_2\in B(w,r_0)\cap \Om$, then there is a rectifiable curve $\gamma\colon [0,1]\rightarrow \Om$ with $\gamma(0)=w_1$ and $\gamma(1)=w_2$ such that
$\HH^1(\gamma([0,1]))\leq M|w_1-w_2|$ and 
$$\min{\{\HH^1(\gamma([0,t])),\HH^1(\gamma([t,1]))\}}\leq M dist(\gamma(t),\partial \Om)\quad \text{for every}\quad t\in[0,1].$$
\end{enumerate} 
\end{teo}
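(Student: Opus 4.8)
This is the Kenig--Toro theorem, but here is the route I would take. The plan is to establish the three NTA properties separately, the two corkscrew conditions being immediate consequences of the definition and the Harnack chain condition carrying all the weight. I would fix $r_0:=R/100$. Given $x\in\partial\Om$ and $r\in(0,r_0)$, apply the definition of $(\delta,R)$-Reifenberg flatness at scale $r$: there is a hyperplane $H_{x,r}\ni x$ with $\mathrm{dist}(B_r(x)\cap H_{x,r},\,B_r(x)\cap\partial\Om)<\delta r$ in the Hausdorff sense, and one of the two caps $B_r(x)\cap\{\,y:\mathrm{dist}(y,H_{x,r})>2\delta r\,\}$ lies inside $\Om$, the other inside $\R^d\setminus\overline\Om$. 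Letting $\nu$ be the unit normal of $H_{x,r}$ pointing to the $\Om$-side and $x_0:=x+t\nu$ with $t$ just below $r/2$, one gets $\mathrm{dist}(x_0,H_{x,r})=t>2\delta r$ as soon as $\delta_0\le 1/8$, hence $x_0\in\Om$; since boundary points in $B_r(x)$ stay within $\delta r$ of $H_{x,r}$ and boundary points outside $B_r(x)$ are at distance $\ge r/2$ from $x_0$, one finds $M^{-1}r<\mathrm{dist}(x_0,\partial\Om)<|x-x_0|<r$ with $M=4$. The same argument on the opposite cap gives (2).

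Before the chains I would record the elementary fact that, at a fixed $x\in\partial\Om$, the approximating hyperplanes at consecutive dyadic scales almost coincide: with $n_{x,r}$ the unit normal of $H_{x,r}$ oriented toward $\Om$, one has $|n_{x,r}-n_{x,2r}|\le C\delta$ whenever $2r\le R$. Indeed both $H_{x,r}$ and $H_{x,2r}$ pass through $x$, and inside $B_r(x)$ each is within $2\delta r$ (Hausdorff) of $\partial\Om\cap B_r(x)$, hence within $4\delta r$ of each other on $B_r(x)$; two hyperplanes through a common point that are $4\delta r$-close on $B_r(x)$ have normals $O(\delta)$-apart, by elementary trigonometry.

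The Harnack chain condition is the core. Given $w\in\partial\Om$ and $w_1,w_2\in B(w,r_0)\cap\Om$, set $d_i:=\mathrm{dist}(w_i,\partial\Om)$, $\rho:=|w_1-w_2|$, and assume $d_1\le d_2$. If $\rho\le\tfrac1{10}d_1$, the straight segment $[w_1,w_2]$ stays in $\Om$ and is a cigar curve trivially; so suppose $d_1\le 10\rho$. I would build $\gamma$ from three pieces. \emph{Vertical ascent from $w_1$:} with $\bar w_1\in\partial\Om$ a nearest point and $r_j:=2^{-j}$ ranging from $r_{j_0}\sim d_1$ up to $r_{j_1}\sim\rho$, one passes from the point at height $\sim r_{j}$ over $\partial\Om$ to the point at height $\sim r_{j-1}$ by moving a distance $\sim r_j$ in direction $n_{\bar w_1,r_j}$; using flatness at scale $r_j$ each such segment stays in the $\Om$-cap of $B_{r_j}(\bar w_1)$, the total length is $\sum_j Cr_j\lesssim\rho$, the total horizontal drift off the normal ray through $\bar w_1$ is $\sum_j C\delta r_j\lesssim\delta\rho\ll\rho$ by the drift lemma, and at every point $p$ of the ascent the arclength from $w_1$ to $p$ is comparable to $\mathrm{dist}(p,\partial\Om)$, which is precisely the cigar bound. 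This reaches $z_1\in\Om$ with $\mathrm{dist}(z_1,\partial\Om)\sim\rho$, lying essentially above $\bar w_1$. \emph{Ascent from $w_2$:} symmetric, producing $z_2$; unless $d_2\gg\rho$, in which case $z_1\in B_{d_2}(w_2)\subset\Om$ and one uses the segment $[z_1,w_2]$ directly. \emph{Horizontal bridge:} since $|\bar w_1-\bar w_2|\le d_1+\rho+d_2\lesssim\rho$ and $\partial\Om$ is $\delta$-flat at scale $\sim\rho$ near $\bar w_1$, the points $z_1,z_2$ both sit on the $\Om$-side at height $\gtrsim\rho\gg\delta\rho$, so $[z_1,z_2]\subset\Om$, has length $\lesssim\rho$, and every point of it is at distance $\gtrsim\rho$ from $\partial\Om$. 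Concatenating ascent, bridge, and the reverse ascent gives a curve from $w_1$ to $w_2$ of length $\lesssim\rho=|w_1-w_2|$, and the cigar estimate holds piece by piece, which is (3) with universal $M$.

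The delicate point, and the one I expect to cost the most, is the vertical ascent: one must verify that each elementary segment really stays inside $\Om$ even though the approximating hyperplanes rotate from scale to scale, and that the accumulated horizontal drift never leaves the region where the flatness at the current scale applies. Both are tamed by the drift lemma (the drift is summable because it decays geometrically in the scale), but fitting the constants together so that $M$ and $\delta_0$ come out universal is the part demanding care; the remainder is soft.
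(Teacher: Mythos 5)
The paper does not prove this theorem: it is imported verbatim from Kenig and Toro, \cite[Theorem 3.1]{kt1}, and is used later as a black box (via Lemma~\ref{reiflatprop} and the boundary Harnack machinery). There is therefore no internal proof to compare against, and citing it is the intended reading.

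As a reconstruction of the Kenig--Toro argument, your sketch is on the right track. The corkscrew conditions do follow directly from the two-sided cap condition in the definition of Reifenberg flatness, and the Harnack chain is assembled essentially as you describe: dyadic ascent from each endpoint to a common scale $\sim|w_1-w_2|$, then a flat horizontal bridge, with the cigar estimate verified piecewise. The part you flag as delicate is indeed where all the work sits, and there are three things a complete write-up would have to nail down that your sketch leaves implicit. First, the ascent must actually begin at $w_1$, not at a fictitious point sitting at height $d_1$ above the nearest boundary point $\bar w_1$; one needs a preliminary segment from $w_1$ into the cap at scale comparable to $d_1$, and must check that segment stays in $\Omega$. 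Second, the lower bound $\mathrm{dist}(x_0,\partial\Omega)>M^{-1}r$ in the corkscrew argument uses both the separation from $H_{x,r}$ inside $B_r(x)$ and the fact that $|x-x_0|<r/2$ to rule out nearby boundary points outside $B_r(x)$; the resulting $M$ is tied to the choice $t\approx r/2$ and to $\delta_0$, and making it genuinely universal takes care. Third, the drift lemma gives $|n_{x,r}-n_{x,2r}|\le C\delta$, but the ascent compares hyperplanes based at $\bar w_1$ against the boundary near the current point, which has drifted horizontally by $O(\delta r_j)$; one has to check the flatness estimate still applies at the shifted center, which costs a factor in the constants. None of these is a missing idea --- they are exactly the quantitative bookkeeping that constitutes the Kenig--Toro proof --- but as written yours is a plan rather than a proof, and for the purposes of this paper the correct move is to cite \cite{kt1} as the authors do.
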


\begin{oss}\label{rem:ntapalleconn}\rm
We note that an NTA domain $\Om\subset \R^d$ is obviously connected, while its intersection with a ball is not necessarily so. This is due to the fact that an arc, contained in $\Omega$ and connecting two point inside the ball, may go out and then back in. On the other hand the NTA condition implies that the two points can be connected with an arc of length comparable to the length of the radius of the ball. Precisely, there exists a constant $M>0$ such that the following property holds: 
\begin{center}{\it For every $x\in \partial\Omega$ and every $r>0$, there is exactly\\ one connected component of $B_r(x)\cap \Om$ that intersects $B_{r/M}(x)\cap \Om$.}
\end{center}	
\end{oss}

\begin{lemma}\label{reiflatprop}
Let $U$ be a solution of~\eqref{vectfb} and $x_0\in  \reg$. Then $\Omega_U$ is Reifenberg flat and NTA in a neighborhood of $x_0$. 
\end{lemma}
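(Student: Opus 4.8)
The plan is to show that $\partial\Om_U$ is Reifenberg flat near $x_0$ by a compactness argument based on the classification of Lemma~\ref{l:bw}, and then to quote Theorem~\ref{reifimplnta} to upgrade this to the NTA property; this is the scheme of~\cite{mtv}, which I follow, stressing only the points where Lemma~\ref{l:bw} enters. Set $L:=\tfrac12\Lambda\om_d$. Since $x_0\in\reg=\Om_U^{(\sfrac12)}\cap D$, Remark~\ref{rem:density} gives $W(U,x_0,0)=L$, and since by Lemma~\ref{l:bw} the Lebesgue density of $\Om_U$ is always at least $\sfrac12$ at free boundary points, Remark~\ref{rem:density} also gives $W(U,x,0)\ge L$ for every $x\in\partial\Om_U\cap D$: thus $L$ is the minimal energy density and it is attained at $x_0$. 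The first step is a \emph{uniform} Weiss-gap estimate: for every $\eps>0$ there are $r_\eps,\rho_\eps>0$ with
\[
L\le W(U,x,\rho)\le L+\eps\qquad\text{for all}\quad x\in\partial\Om_U\cap B_{r_\eps}(x_0),\quad 0<\rho\le\rho_\eps.
\]
The lower bound follows from the remark above and the monotonicity of $\rho\mapsto W(U,x,\rho)$ (Lemma~\ref{mono_weiss}); for the upper bound one fixes $\rho_\eps$ with $W(U,x_0,\rho_\eps)<L+\tfrac\eps2$, uses the continuity of $x\mapsto W(U,x,\rho_\eps)$ (every term of \eqref{weiss_fun} depends continuously on the center, $U$ being continuous) to get $W(U,x,\rho_\eps)<L+\eps$ on a small ball, and concludes by monotonicity in $\rho$.

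Next I argue by contradiction. If $\Om_U$ were not $(\delta,R)$-Reifenberg flat in any neighborhood of $x_0$, for some fixed $\delta<\delta_0$, there would be points $x_n\to x_0$, $x_n\in\partial\Om_U$, and radii $\rho_n\to0$ such that the rescalings $U_n:=U_{\rho_n,x_n}$, with $\Om_n:=\{|U_n|>0\}$, violate $\delta$-flatness at scale $1$ about the origin. By the compactness results of Subsection~\ref{sub:blowupexistence}, valid for variable centers (cf.~\cite[Proposition~4.5]{mtv}), along a subsequence $U_n\to U_0$ strongly in $H^1_{loc}$, $\ind_{\Om_n}\to\ind_{\Om_0}$ in $L^1_{loc}$, $\overline{\Om}_n\to\overline{\Om}_0$ and $\Om_n^c\to\Om_0^c$ locally in the Hausdorff sense, and $U_0$ is a global minimizer of~\eqref{vectfb}, non-degenerate at $0$, with $0\in\partial\Om_0$. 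The scaling identity $W(U_n,0,s)=W(U,x_n,\rho_n s)$ together with Step~1 gives $W(U_0,0,s)=L$ for every $s>0$, so $s\mapsto W(U_0,0,s)$ is constant and \eqref{e:derivataW} (applied to $U_0$ and its components) forces $U_0$ to be one-homogeneous. Then $W(U_0,0,1)=L$ and Remark~\ref{rem:density} give $|\Om_{U_0}\cap B_1|=\tfrac12|B_1|$, and the argument of Step~1 in the proof of Lemma~\ref{l:bw} (Faber--Krahn on the sphere plus an internal perturbation) applies verbatim to the one-homogeneous global minimizer $U_0$ and yields, up to rotation, $U_0(x)=\xi\,x_d^+$ with $|\xi|=\sqrt\Lambda$.

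Thus $\overline{\Om}_0=\{x_d\ge0\}$ and $\Om_0^c=\{x_d\le0\}$, and from the Hausdorff convergences above one deduces that $\partial\Om_n\cap B_1$ converges in Hausdorff distance to $\{x_d=0\}\cap B_1$ and that, for $n$ large, $\{x_d>\delta\}\cap B_1\subset\Om_n$ while $\{x_d<-\delta\}\cap B_1\subset\R^d\setminus\overline{\Om}_n$. Hence $U_n$ \emph{does} satisfy $\delta$-flatness at scale $1$ for $n$ large, a contradiction. Therefore $\Om_U$ is $(\delta,R)$-Reifenberg flat in a neighborhood of $x_0$ for every $\delta>0$ and a suitable $R=R(\delta)$; choosing $\delta<\delta_0$, Theorem~\ref{reifimplnta} gives that $\Om_U$ is NTA near $x_0$.

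The step I expect to be the main obstacle is obtaining a \emph{one-homogeneous} limit from a \emph{variable-center} blow-up: unlike fixed-center blow-ups, where homogeneity is immediate from the monotonicity formula (Remark~\ref{r:hm}), here it relies on the uniform Weiss-gap bound of Step~1, which combines the monotonicity formula, the continuity (hence upper semicontinuity) of the energy density $x\mapsto W(U,x,0)$, and the density gap of Lemma~\ref{l:bw}. Once homogeneity is secured the classification identifies the limit as a half-plane solution and the remaining passage — from the Hausdorff convergence of $\overline{\Om}_n,\Om_n^c$ to Reifenberg flatness — is routine, exactly as in~\cite{mtv}.
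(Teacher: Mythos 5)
Your proposal is correct and follows essentially the same contradiction-plus-compactness argument as the paper (which in turn refers to \cite[Proposition~5.9]{mtv}): variable-center blow-ups, the Weiss monotonicity and continuity in the center to pin the limiting energy density at $\tfrac12\Lambda\omega_d$, homogeneity of the limit, classification as a half-plane solution, and then Kenig--Toro to pass from Reifenberg flatness to NTA. You are slightly more explicit than the paper in stating the lower bound $W(U,x,0)\ge L$ coming from the density classification of Lemma~\ref{l:bw}, but this is an elaboration of, not a departure from, the paper's argument.
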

\begin{proof}
The proof follows by the same contradiction argument as in \cite[Proposition 5.9]{mtv}. Indeed, suppose that $\reg\ni x_n\to x_0$ and $r_n\to 0$ be such that $\partial \Omega_U$ is NOT $(\delta,r_n)$ flat in $B_{r_n}(x_n)$. Let $U_n:=U_{2r_n,x_n}$. Up to a subsequence $U_n$ converges to $U_0\in H^1(B_1;\R^k)$ which is a solution of \eqref{vectfb} in $B_1$. We will prove that $U_0$ is of the form \eqref{e:one_phase_bw}, then the conclusion will follow by the Hausdorff convergence of $\partial\Omega_{U_n}$ to $\partial\Omega_{U_0}$. Now, for fixed $0<r<1$ we have $W(U_n,0,r)=W(U,x_n,rr_n)\to W(U_0,x,r)$ as $n\to\infty$. Let now $\eps>0$ be fixed. Since $x_0\in\reg$, there is some $R>0$ such that $W(U,x_0,R)-\frac{\Lambda \omega_d}{2}\le \eps/2$. By the continuity of $W$ in $x$ we get that for $n$ large enough, $W(U,x_n,R)-\frac{\Lambda \omega_d}{2}\le \eps$ and, by the monotonicity of $W$, $W(U,x_n,rr_n)-\frac{\Lambda \omega_d}{2}\le \eps$. Passing to the limit in $n$ we obtain $W(U_0,x,r)-\frac{\Lambda \omega_d}{2}\le\eps$. Since $\eps$ is arbitrary, we get $W(U_0,x,r)=\frac{\Lambda \omega_d}{2}$. Finally, Lemma \ref{mono_weiss} implies that $U_0$ is one-homogeneous and $|B_1\cap\Omega_{U_0}|=\frac{\omega_d}2$. Thus, $U_0$ is necessarily of the form \eqref{e:one_phase_bw}, which concludes the proof.
\end{proof}
\subsection{Existence of a constant sign component}\label{sub:main}
After showing in the previous Section that the regular part of the free boundary is an NTA domain, we aim now to apply a boundary Harnack principle on it.
It was proved in \cite{jk} that in any NTA domain $\Omega\subset\R^d$ the Boundary Harnack Principle does hold, that is, if $u$ and $v$ are positive harmonic functions in $\Omega$, vanishing on the boundary $\partial\Omega\cap B_r$, then  
\[
\frac{v}{u}\ \mbox{ is H\"older continuous on }\ \overline{\Om}\cap B_r.
\]
The precise statement of the boundary Harnack property for harmonic functions which we will use in Lemma~\ref{costsign} is the following~\cite[Theorem~5.1 and Theorem~7.9]{jk}.
\begin{teo}[Boundary Harnack Principle for NTA Domains]\label{bdryharnack}
Let $\Omega\subset\R^d$ be an NTA domain and $A\subset \R^d$ an open set. For any compact $K\subset A$ there exists a constant $C>0$ such that for all positive harmonic functions $u,v$ vanishing continuously on $\partial \Om\cap A$, we have 
\begin{equation*}
C^{-1}\frac{v(y)}{u(y)}\leq \frac{v(x)}{u(x)}\leq C\frac{v(y)}{u(y)}\ ,\qquad\text{for all}\qquad x,y\in K\cap \overline \Om.
\end{equation*}
Moreover, there exists $\beta>0$, depending only on the NTA constants, such that the function $v/u$ is H\"older continuous of order $\beta$ in $K\cap \overline \Om$. In particular, for any $y\in \partial \Om \cap K$, the limit $\ds\lim_{\tiny\begin{array}{c} x\rightarrow y\\ \tiny x\in\Omega\end{array}}\frac{v(x)}{u(x)}$ exists. 
%
%
\end{teo}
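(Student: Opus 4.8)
The plan is to follow the classical potential-theoretic route of Jerison--Kenig, reducing the statement to quantitative estimates on harmonic measure. Fix notation: for $Q\in\partial\Omega$ and $r>0$ small write $\Delta_r(Q):=\partial\Omega\cap B_r(Q)$, let $A_r(Q)\in\Omega$ be a corkscrew point at scale $r$ (so $\mathrm{dist}(A_r(Q),\partial\Omega)\ge M^{-1}r$ and $|A_r(Q)-Q|<r$), and let $\omega^x$ and $G(x,y)$ denote the harmonic measure and Green function of $\Omega$. The two structural facts that drive everything are, first, the \emph{Harnack chain} consequence of Theorem \ref{reifimplnta}: two interior points with comparable distance to $\partial\Omega$ and comparable mutual distance are joined by a chain of balls of bounded cardinality, so any positive harmonic function takes comparable values at them (with constant depending only on the NTA constants); and second, the \emph{Carleson estimate}: if $w\ge0$ is harmonic in $\Omega\cap B_{2r}(Q)$ and vanishes continuously on $\Delta_{2r}(Q)$, then $w\le C\,w(A_r(Q))$ throughout $\Omega\cap B_r(Q)$. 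The Carleson estimate is proved by comparing $w$, via the maximum principle, with the harmonic measure of a Whitney-type subregion; the corkscrew condition for $\R^d\setminus\Omega$ guarantees a definite amount of complementary boundary inside every surface ball, which is what makes this comparison quantitative.

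From these two tools one derives the standard package: the equivalence $\omega^x(\Delta_r(Q))\approx r^{d-2}G(x,A_r(Q))$ for $x\in\Omega\setminus B_{2r}(Q)$ (apply the Carleson estimate and the maximum principle to $y\mapsto G(x,y)$), and the doubling property $\omega^x(\Delta_{2r}(Q))\le C\,\omega^x(\Delta_r(Q))$ (the same, applied to $y\mapsto\omega^y(\Delta_r(Q))$). Now let $u,v\ge0$ be harmonic in $\Omega\cap A$ and vanish on $\partial\Omega\cap A$, fix $Q\in\partial\Omega\cap K$ and $r\le c\,\mathrm{dist}(K,\partial A)$. Combining the Carleson estimate (upper bound), the corkscrew interior point together with interior Harnack (lower bound), and the maximum principle on $\Omega\cap B_{2r}(Q)$ gives the fundamental two-sided comparison
$$u(x)\ \approx\ u(A_r(Q))\;\omega^x\!\big(\Delta_{2r}(Q)\big)\qquad\text{for every }x\in\Omega\cap B_r(Q),$$
and likewise for $v$. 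Dividing, the harmonic-measure factor cancels and we obtain $v(x)/u(x)\approx v(A_r(Q))/u(A_r(Q))$ on $\Omega\cap B_r(Q)$, with constant depending only on the NTA constants. Covering $K\cap\overline\Omega$ by finitely many such balls and joining the corresponding corkscrew points by interior Harnack chains, one patches these local estimates into the displayed inequality $C^{-1}v(y)/u(y)\le v(x)/u(x)\le C\,v(y)/u(y)$ for all $x,y\in K\cap\overline\Omega$.

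For the H\"older continuity, fix $Q\in\partial\Omega\cap K$ and for $j\ge0$ set $M_j:=\sup_{\Omega\cap B_{2^{-j}r}(Q)}(v/u)$ and $m_j:=\inf_{\Omega\cap B_{2^{-j}r}(Q)}(v/u)$. The functions $v-m_j u\ge0$ and $M_j u-v\ge0$ are harmonic in $\Omega\cap B_{2^{-j}r}(Q)$ and vanish on the boundary there, so applying the fundamental comparison to each of them against $u$ at the next dyadic scale produces a geometric decay estimate
$$M_{j+1}-m_{j+1}\ \le\ \theta\,(M_j-m_j)$$
for some $\theta\in(0,1)$ depending only on the NTA constants; the gain $\theta<1$ is exactly the statement that the comparison constant above is strictly bigger than $1$, i.e. that in a surface ball both $\Omega$ and its complement carry a fixed fraction of the harmonic measure. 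Iterating yields $\mathrm{osc}_{\Omega\cap B_\rho(Q)}(v/u)\le C\,(\rho/r)^{\beta}$ with $\beta=\log_2(1/\theta)>0$, which is the asserted H\"older bound of order $\beta$ on $K\cap\overline\Omega$; in particular $v/u$ has a limit at every $Q\in\partial\Omega\cap K$.

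The main obstacle is concentrated in the Carleson estimate and the fundamental comparison $u\approx u(A_r)\,\omega^x(\Delta_{2r})$: these must be extracted purely from the corkscrew and Harnack-chain axioms, with no regularity of $\partial\Omega$ available, and this is precisely where non-tangential accessibility of \emph{both} interior and exterior points enters in an essential way. Once this local boundary behaviour is in hand, the two-sided ratio bound and the oscillation iteration are routine. All of this is carried out in \cite{jk}, to which we refer for the details.
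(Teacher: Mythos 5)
The paper does not prove this theorem: it is imported verbatim from Jerison--Kenig, with the citation \cite[Theorems~5.1 and 7.9]{jk}, and used as a black box. Your sketch correctly reproduces the Jerison--Kenig argument (Carleson estimate from the corkscrew condition, comparison $u(x)\approx u(A_r(Q))\,\omega^x(\Delta_{2r}(Q))$, cancellation of the harmonic measure factor in the ratio, and the dyadic oscillation iteration for the H\"older bound), so it is consistent with the paper's approach, merely more detailed; the only minor imprecision is in your explanation of why $\theta<1$ in the oscillation lemma, where the gain comes from the finiteness of the two-sided ratio constant $C$ (one of $v/u-m_j$ or $M_j-v/u$ is at least $\tfrac12(M_j-m_j)$ at the corkscrew point, giving $\theta=1-\tfrac1{2C}$), not literally from ``$C>1$'' or from a harmonic-measure dichotomy.
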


\begin{oss}[Boundary Harnack principle for sign-changing $v$]
Theorem \ref{bdryharnack} still holds in the case when $u>0$ on the NTA domain $\Omega$ and $v$ is a harmonic function on $\Omega$ that may change sign. Indeed, if $v:B_1\cap\Omega\to\R$ is a harmonic function that changes sign in  $B_1\cap\Omega$ and vanishes on $\partial\Omega\cap B_1$, then we consider the harmonic extensions $h_+$ and $h_-$ solutions of the positive and negative parts of $v$:
$$\Delta h_{\pm}=0\quad\text{in}\quad\Omega\cap B_1,\qquad h_{\pm}=0\quad\text{on}\quad \partial\Omega\cap B_1,\qquad h_{\pm}=v_{\pm}\quad\text{on}\quad \partial B_1\cap\Omega.$$
Now, by Remark \ref{rem:ntapalleconn}, each of the functions $h_{\pm}$ is strictly positive or vanishes identically in $\Omega\cap B_{1/M}$. Thus, the claim follows by the boundary Harnack principle for positive functions applied to $h_+$ and $u$ (and $h_-$ and $u$), the fact that $v=h_+-h_-$ and a standard covering argument. 
\end{oss}

\begin{oss}
The constants $C$ and $\beta$ in the boundary Harnack principle do not change under blow-up. That is, given $x_0=0\in\partial\Omega$, there is $r_0>0$ such that for all harmonic functions $u,v$, solutions of 
$$\Delta u=\Delta v=0\quad\text{in}\quad \Omega_r\cap B_1\ ,\qquad u=v=0\quad\text{on}\quad \partial\Omega_r\cap B_1\ ,\qquad\Omega_r:=\frac1r\Omega\ ,\qquad 0<r<r_0\ ,$$
we have 
\begin{equation}\label{e:BH}
C^{-1}\frac{v(y)}{u(y)}\leq \frac{v(x)}{u(x)}\leq C\frac{v(y)}{u(y)}\ ,\qquad\text{for all}\qquad x,y\in B_{\sfrac12}\cap \overline \Om_r.
\end{equation}
\end{oss}
Following \cite{mtv} we aim to apply the boundary Harnack principle to the components of the vector $U$ in order to obtain that, for some $i\in\{1,\dots,k\}$, $|\nabla u_i|$ is Hölder continuous on $\partial\Omega_U$ and to apply the known regularity results for the one-phase Bernoulli problem to deduce that $\partial\Omega_U$ is $C^{1,\alpha}$. In our setting the functions $u_i$, $i=1,\dots,k$, may change sign, which is a major obstruction since \eqref{e:BH} can be applied only in the case when the denominator $u$ is strictly positive. In order to overcome this issue, we first show that, at every point $x_0$ of the regular free boundary $\reg$, there is a neighborhood of $x_0$ and a component $u_i$ which has constant sign in it.

\begin{lemma}\label{costsign}
Let $U=(u_1,\dots, u_k)$ be a solution for~\eqref{vectfb}. For all $x_0\in \reg$, there is $r>0$ and $i\in\{1,\dots,k\}$  such that the component $u_i$ has constant sign in $B_r(x_0)\cap\Omega_U$.
Moreover, there is a constant $C_{sign}>0$ such that $C_{sign}u_i\geq |U|$ in $B_r(x_0)\cap \Omega_U$. 
\end{lemma}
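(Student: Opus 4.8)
\emph{Strategy.} Fix $x_0\in\reg$. By Lemma~\ref{reiflatprop} the set $\Omega_U$ is Reifenberg flat and NTA in some ball $B_{\rho_0}(x_0)$, and by Remark~\ref{rem:ntapalleconn} for every $\rho<\rho_0$ there is a single connected component $\Omega'$ of $\Omega_U\cap B_\rho(x_0)$ containing $\Omega_U\cap B_{\rho/M}(x_0)$; we work in $\Omega'$. For each $j\in\{1,\dots,k\}$ split $u_j=h_j^+-h_j^-$ in $\Omega'$, where $h_j^\pm$ is harmonic in $\Omega'$ with $h_j^\pm=(u_j)_\pm$ on $\partial B_\rho\cap\Omega'$ and $h_j^\pm=0$ on $\partial\Omega_U\cap B_\rho$ (the decomposition from the Remark after Theorem~\ref{bdryharnack}); since $(u_j)_\pm$ is subharmonic, $h_j^\pm\ge(u_j)_\pm\ge0$, and by Remark~\ref{rem:ntapalleconn} each $h_j^\pm$ is either identically $0$ or strictly positive on $\Omega_U\cap B_{\rho/M}(x_0)$. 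Choose a blow-up $U_0(x)=\xi\,(x\cdot\nu)_+\in\mathcal{BU}_U(x_0)$ (of this form because $x_0\in\reg$) and an index $i$ with $\xi_i\neq0$, which exists since $|\xi|=\sqrt\Lambda$; replacing $u_i$ by $-u_i$ if necessary (this changes neither $\Omega_U$ nor the problem) we may assume $\xi_i>0$, and we set $h:=h_i^+$, a strictly positive harmonic function in $\Omega'$ vanishing on $\partial\Omega_U\cap B_\rho$, with $h\ge(u_i)_+$.

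By the boundary Harnack principle, Theorem~\ref{bdryharnack} together with the Remark permitting a sign-changing numerator, the quotient $u_j/h$ extends to a H\"older continuous function on $\overline{\Omega_U}\cap B_{\rho'}(x_0)$ for some $0<\rho'<\rho/M$; in particular the limit $m_j:=\lim_{x\to x_0,\,x\in\Omega_U}u_j(x)/h(x)$ exists for every $j$. Applying the same principle to the positive harmonic functions $h_j^\pm$ and $h$ shows that every $h_j^\pm$ not identically zero is comparable to $h$ near $x_0$; hence $|u_j|\le h_j^++h_j^-\le C\,h$ and therefore $|U|\le C\sqrt k\,h$ in a neighbourhood of $x_0$.

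The heart of the proof is the converse bound, \emph{$|U|\ge c\,h$ near $x_0$}, i.e. the non-degeneracy of the reference function $h$ at $x_0$. Granting it, from $u_j=(m_j+o(1))h$ we obtain $|U|^2=\big(\textstyle\sum_j m_j^2+o(1)\big)h^2$, so $|U|=\big(|(m_j)_j|+o(1)\big)h$ near $x_0$, and comparability with $h$ forces $(m_j)_j\neq0$. Pick $l$ with $m_l\neq0$ and, flipping the sign of $u_l$ if needed, assume $m_l>0$. Since $u_l/h$ is continuous up to $\partial\Omega_U$ near $x_0$ with value $m_l$ there, there is $r\in(0,\rho')$ with $u_l\ge\tfrac{m_l}{2}h>0$ on $\Omega_U\cap B_r(x_0)$, so $u_l$ has constant sign in $\Omega_U\cap B_r(x_0)$; shrinking $r$ so that $|u_j/h|\le|m_j|+1$ on $\overline{\Omega_U}\cap B_r(x_0)$ for all $j$ gives $|U|\le\big(\sum_j(|m_j|+1)^2\big)^{1/2}h\le\tfrac{2}{m_l}\big(\sum_j(|m_j|+1)^2\big)^{1/2}u_l$ on $\Omega_U\cap B_r(x_0)$, which is the assertion, with $i:=l$ and $C_{sign}:=\tfrac{2}{m_l}\big(\sum_j(|m_j|+1)^2\big)^{1/2}$.

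It remains to prove $|U|\ge c\,h$ near $x_0$, and this is the step where the regularity of the point $x_0$ is essential. Suppose, for contradiction, that $|U|/h\to0$ at $x_0$. Since $h$ is comparable near $x_0$ to a fixed positive harmonic function vanishing on $\partial\Omega_U\cap B_\rho$ (for instance the harmonic extension of $|U|$, by Theorem~\ref{bdryharnack} again), and such a function is in turn two-sided comparable to $\mathrm{dist}(\cdot,\partial\Omega_U)$ near the Reifenberg-flat point $x_0$ (combine the corkscrew condition with elementary harmonic barriers in the $\delta$-flat neighbourhood, using that $|U|$ is Lipschitz), we would get $|U(x)|=o\big(\mathrm{dist}(x,\partial\Omega_U)\big)$, hence $\sup_{|x-x_0|=t}|U(x)|=\omega(t)\,t$ with $\omega(t)\to0$. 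Then the least concave majorant $\Psi$ of $\omega(t)\,t$ with $\Psi(0)=0$ satisfies $\Psi'(0^+)=0$, so $\varphi(x):=\Psi(|x-x_0|)$ is differentiable at $x_0$, touches $|U|$ from above there, and has $|\nabla\varphi(x_0)|=0$; this contradicts the viscosity optimality condition $|\nabla|U||=\sqrt\Lambda$ on $\partial\Omega_U$ near $x_0$ from Lemma~\ref{optvisc}. This comparison between the harmonic function $h$ and the norm $|U|$ is the main obstacle; the boundary Harnack steps above are routine once Lemma~\ref{reiflatprop} is available.
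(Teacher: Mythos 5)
Your setup (work in the single "good" connected component supplied by Remark~\ref{rem:ntapalleconn}, decompose $u_j=h_j^+-h_j^-$ with $h_j^\pm\ge(u_j)_\pm$ via harmonic replacement, and feed everything through the boundary Harnack inequality) matches the paper's broad strategy, and your reduction to the two-sided comparability $c\,h\le|U|\le C\,h$ is a sensible way to organize the proof. The gap is in the step you yourself flag as "the heart of the proof": both links in your chain $|U|\gtrsim h\gtrsim\mathrm{dist}$ are unjustified. First, in a merely $(\delta,R)$-Reifenberg-flat domain a positive harmonic function vanishing on the boundary is in general only comparable to $\mathrm{dist}^{1\pm\varepsilon(\delta)}$, not to $\mathrm{dist}$; neither the corkscrew condition nor planar barriers nor the Lipschitz bound on $|U|$ (which controls $|U|$, not its harmonic majorant $\hat u\ge|U|$) upgrades this to a genuine two-sided comparison with $\mathrm{dist}$. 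Second, the concave-majorant argument is incorrect: for a concave $\Psi$ on $[0,a]$ with $\Psi(0)=0$ the quotient $\Psi(t)/t$ is non-increasing, hence $\Psi'(0^+)=\sup_{t\in(0,a]}\Psi(t)/t\ge f(a)/a=\omega(a)>0$ unless $\Psi\equiv 0$. So $\varphi(x)=\Psi(|x-x_0|)$ has a genuine corner at $x_0$ and is not an admissible test function in Definition~\ref{viscoptimality}; you cannot invoke the viscosity condition $|\nabla|U||=\sqrt\Lambda$ with it. Both sub-claims would have to be repaired before the contradiction closes.

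The paper avoids this entirely by working at the blow-up scale instead of at a fixed scale: one picks $x_M=e_d/(2M)$, notes that $u_{n1}^+(x_M)\to\xi_1/(2M)>0$ and $\|u_{n1}^-\|_{L^\infty(B_1)}\to0$ by the uniform convergence $U_{r_n,x_0}\to\xi\,x_d^+$, and then applies the scale-invariant boundary Harnack inequality to the harmonic replacements $\widetilde u_n^\pm$ of $u_{n1}^\pm$. This gives $u_{n1}>0$ on $\Omega_n\cap B_{1/2}$ for $n$ large, i.e.\ constancy of sign at a fixed small scale, with no need to relate harmonic functions to $\mathrm{dist}$ or to touch $|U|$ with a viscosity test function. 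If you want to keep your non-degeneracy-of-$h$ framing, you should anchor it the same way: use the blow-up convergence to produce an interior point $x_0+r_ne_d/(2M)$ at which $|U|\gtrsim r_n$ while $h$ is controlled from above by $\|U\|_{L^\infty(\partial B_\rho)}$ and BHP, rather than trying to extract non-degeneracy from the pointwise viscosity condition.
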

\begin{proof}
Without loss of generality $x_0=0$. Let $U_0\in \mathcal {BU}_U(x_0)$ and $U_n:=U_{r_n}$ be a blow-up sequence converging to $U_0$. By Lemma \ref{l:bw} there is a vector $\xi=(\xi_1,\dots,\xi_k)\in\R^k$ such that $|\xi|=\sqrt\Lambda$ and $U_0(x)=\xi x_d^+$ up to a rotation of $\R^d$. Now since $|\xi|=\sqrt{\Lambda}$, there is at least one component $\xi_i$ such that $|\xi_i|\ge\sqrt{\Lambda/k}$. Without loss of generality we can assume that $i=1$ and $\xi_1\ge\sqrt{\Lambda/k}$.

Let $\Omega_n=\Omega_{U_n}$ and $U_n=(u_{n1},\dots,u_{nk}):\Omega_{n}\cap B_1\to\R^k$; $u_{n1}^+$ and $u_{n1}^-$ be the positive and the negative parts of $u_{n1}$; $\widetilde u_{n}^+$ and $\widetilde u_{n}^-$ be the solutions of 
$$\Delta \widetilde u_{n}^{\pm} =0\quad\text{in}\quad \Omega_{n}\cap B_1,\qquad  \widetilde u_{n1}^{\pm}=0 \quad\text{on}\quad \partial\Omega_{n}\cap B_1,\qquad  \widetilde u_{n}^{\pm}=u_{n1}^{\pm} \quad\text{on}\quad \Omega_{n}\cap\partial B_1.$$ 
Now, notice that both  $u_{n1}^+$ and $u_{n1}^-$ are subharmonic on $\Omega_{n}\cap B_1$. Thus,  
$$\widetilde u_n^+-\widetilde u_n^-=u_{n1}^+-u_{n1}^-=u_{n1}\ ,\quad
\widetilde u_n^{+}\geq u_{n1}^{+},\quad\text{and}\quad
\widetilde u_n^{-}\geq u_{n1}^{-}\quad \mbox{in}\quad\Omega_n\cap B_1.$$
Let $M$ be the constant from Remark \ref{rem:ntapalleconn}. By the fact that the blow-up limit $U_0$ has a positive first component, for a fixed $n$, in the ball $B_{\sfrac1{M}}$ can happen exactly one of the following situations: 
$$(i)\quad \widetilde u_{n}^+>0\quad\text{and}\quad \widetilde u_n^->0\quad\text{in}\quad\Omega_n\cap B_{\sfrac1{M}}\;;\qquad (ii)\quad \widetilde u_n^+>0\quad\text{and}\quad \widetilde u_n^-=0\quad\text{in}\quad\Omega_n\cap B_{1/M}.$$
Moreover, again by Remark \ref{rem:ntapalleconn} we obtain that in both cases we have that $\Omega_n\cap B_{\sfrac1{M}}=\{\widetilde u_n^+>0\}\cap B_{\sfrac1{M}}$, while if $(i)$ holds, then also $\Omega_n\cap B_{\sfrac1{M}}=\{\widetilde u_n^->0\}\cap B_{\sfrac1{M}}$.
Now, notice that in the case $(ii)$ the first part of the claim of the Lemma is trivial, so we concentrate our attention at the case $(i)$. Let $\ds x_M:=\frac{e_d}{2M}$ and $\ds r_M:=\frac1{4M}$.  Recall that $U_{n}$ converges uniformly to $U_0$ and $\partial \Omega_{n}$ converges to $\partial \Omega_{U_{0}}=\{x_d=0\}$ in the Hausdorff distance. Then, for every $\eps>0$, there is $n_0>0$ such that for every $n\ge n_0$ we have 
$$B_{r_M}(x_M)\subset \Omega_n\ ,\quad u_{n1}^+(x_M)\ge \sqrt{\frac{\Lambda}{k}}\frac{r_M}2\ ,\quad\text{and}\quad |u_{n1}^-|\le \eps\quad\text{in}\quad B_1.$$
Now, by the definition of $\widetilde u_n^+$ and $\widetilde u_n^-$ and the maximum principle (applied to $\widetilde u_n^-$), we have 
$$\widetilde u_n^+(x_M)\ge \sqrt{\frac{\Lambda}{k}}\frac{r_M}2\qquad\text{and}\qquad \widetilde u_n^-(x_M)\le \eps.$$
Finally, by \eqref{e:BH}, we obtain 
$$\frac{\widetilde u_n^-(x)}{\widetilde u_n^+(x)}\le C\frac{\widetilde u_n^-(x_M)}{\widetilde u_n^+(x_M)}\le \eps C\sqrt{\frac{\Lambda}{k}}\frac{r_M}2 \qquad\text{for every}\qquad x\in \Omega_n\cap B_{\sfrac12}.$$  
Choosing $\eps$ such that the right-hand side is smaller than one, we get  
$$u_{n1}(x)=\widetilde u_n^+(x)-\widetilde u_n^-(x)>0\qquad\text{for every}\qquad x\in \Omega_n\cap B_{\sfrac12},$$  
which proves the first claim. The second part of the statement follows by the boundary Harnack principle applied to $u_{n1}$ and every component $u_{ni}$, for $i=2,\dots,k$. 
\end{proof}

\subsection{The regular part of the free boundary is $C^{1,\alpha}$}\label{sub:C1alpha}

In the following lemma we show that the positive optimal component is locally a solution of a one-phase scalar free boundary problem with Hölder condition on the free boundary. The $C^{1,\alpha}$ regularity of $\reg$ then follows by known results on the regularity of the one-phase free boundaries (see \cite[Theorem 1.1]{desilva}). 

\begin{lemma}\label{locoptu1}
Let $U=(u_1,\dots,u_k)$ be a minimizer for~\eqref{vectfb} and $0\in \reg$ and let the first component be of constant sign in a neighborhood of $0$, that is $u_1>0$ in $B_{r_0}\cap \Omega_U$. Then there is a constant $0<c_0\le 1$, $0<r\leq r_0$ and a H\"older continuous function $g:B_{r}\cap\partial\Omega_U\to [c_0,1]$ such that $u_1$ is a viscosity solution to the problem
\begin{equation*}
-\Delta u_1=0 \quad\mbox{in}\quad\Omega_U\cap B_r\, \qquad u_1=0\quad \mbox{on}\quad\partial \Omega_U\cap B_r\ ,\qquad |\nabla u_1|=g\sqrt\Lambda \quad \mbox{on}\quad \partial\Omega_U\cap B_r.
\end{equation*}
\end{lemma}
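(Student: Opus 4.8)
The plan is to turn the boundary condition $|\nabla|U||=\sqrt\Lambda$ on $\partial\Omega_U$ into a condition on $|\nabla u_1|$ alone, using the component of constant sign provided by Lemma~\ref{costsign}. First I would note that, since $0\in\reg$, by Lemma~\ref{l:bw} every blow-up at a point $y_0\in\partial\Omega_U\cap B_r$ (for $r$ small) is of the form $U_0(x)=\xi(x\cdot\nu)_+$ with $|\xi|=\sqrt\Lambda$; moreover $\reg$ is open, so we may shrink $r$ so that all points of $\partial\Omega_U\cap B_r$ are regular and $u_1>0$ on $B_r\cap\Omega_U$ (and $C_{sign}u_1\ge|U|$ there, by Lemma~\ref{costsign}, which in particular shows $u_1$ is non-degenerate since $|U|$ is). The idea is that the ``direction'' $\xi/\sqrt\Lambda\in S^{k-1}$ and hence the ratio $u_1(x)/|U(x)|$ has a limit at each free boundary point, and this limit is exactly the value $g(y_0)\in(0,1]$ we want: then $|\nabla u_1|(y_0)=g(y_0)|\nabla|U||(y_0)=g(y_0)\sqrt\Lambda$.

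Next I would make this rigorous via the boundary Harnack principle. For each $i=2,\dots,k$, split $u_i=u_i^+-u_i^-$ and, exactly as in the proof of Lemma~\ref{costsign} and the Remark after Theorem~\ref{bdryharnack}, apply the boundary Harnack principle on the NTA domain $\Omega_U\cap B_r$ (Lemma~\ref{reiflatprop}) to the pairs $(u_i^\pm,u_1)$: the ratios $u_i^\pm/u_1$ extend to $\beta$-Hölder functions on $\overline{\Omega_U}\cap B_{r'}$, so $u_i/u_1$ does too, and hence so does $|U|/u_1=\sqrt{1+\sum_{i\ge2}(u_i/u_1)^2}$ (the quantity under the square root is bounded below by $1$ and above by $C_{sign}^2$, so the square root is Hölder with the same exponent). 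Set $g:=u_1/|U|$ extended to $\partial\Omega_U\cap B_{r'}$; it is Hölder continuous with values in $[c_0,1]$ where $c_0:=C_{sign}^{-1}$. It remains to convert this into the claimed viscosity statement.

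The conversion of the viscosity condition is the technical heart of the argument and I expect it to be the main obstacle. Suppose $\varphi$ touches $u_1$ from below at $y_0\in\partial\Omega_U\cap B_{r'}$ with $|\nabla\varphi(y_0)|>0$ (the degenerate case $\nabla\varphi(y_0)=0$ is automatic). Since $u_1>0$ in $\Omega_U\cap B_{r'}$, writing $|U|=u_1/g$ with $g$ continuous and $g(y_0)>0$, the function $\psi:=\varphi/g(y_0)$ is, up to the correct first-order behaviour, a test function touching $|U|$ from below at $y_0$: more precisely one checks that $\psi$ (suitably corrected by the Hölder modulus of $g$, which is lower-order) touches $|U|$ from below at $y_0$ in the sense of Definition~\ref{viscoptimality}, and applying Lemma~\ref{optvisc} to $|U|$ gives $|\nabla\psi(y_0)|\le\sqrt\Lambda$, i.e. $|\nabla\varphi(y_0)|\le g(y_0)\sqrt\Lambda$; the touching-from-above case is symmetric and yields the reverse inequality. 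The one subtlety is that $g$ is only Hölder, not differentiable, so $\varphi/g$ is not literally a legitimate test function for $|U|$; this is handled by the standard trick of replacing $\varphi$ by $\varphi-|x-y_0|^{1+\alpha'}$ for a suitable $\alpha'$ strictly between the Hölder exponent of $g$ and $1$, which leaves the gradient at $y_0$ unchanged while absorbing the oscillation of $g$ — exactly the mechanism used in~\cite{mtv} and in~\cite[Theorem~1.1]{desilva}. Finally I would record that $u_1$ is harmonic in $\Omega_U\cap B_{r'}$ and vanishes on $\partial\Omega_U\cap B_{r'}$ (immediate from minimality of $U$ and $0\in\reg$), and relabel $r'$ as $r$, $\alpha'$-business aside, to obtain the statement; the $C^{1,\alpha}$ regularity of $\reg$ then follows from~\cite[Theorem~1.1]{desilva} applied to $u_1$.
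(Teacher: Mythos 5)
Your overall strategy tracks the paper's very closely: reduce to a single positive component via Lemma~\ref{costsign}, use the boundary Harnack principle on the NTA domain to make $u_i/u_1$ H\"older, set $g=u_1/|U|$, and then transfer the viscosity condition from $|U|$ to $u_1$ by rescaling the test function. That is indeed the route the paper takes.

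There is, however, a concrete error in the one step you flag as "the technical heart." To absorb the H\"older oscillation of $g$, your additive correction $-|x-y_0|^{1+\alpha'}$ must \emph{dominate} the error coming from replacing $g(x)$ by $g(y_0)$, and that error is of size $|g(x)-g(y_0)|\cdot|U|(x)\lesssim |x-y_0|^{\beta}\cdot |x-y_0|=|x-y_0|^{1+\beta}$, where $\beta$ is the H\"older exponent of $g$ and the last factor uses the Lipschitz bound on $|U|$. So you need $|x-y_0|^{1+\alpha'}\gtrsim |x-y_0|^{1+\beta}$ near $y_0$, i.e.\ $\alpha'<\beta$ (together with a constant). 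You wrote "$\alpha'$ strictly between the H\"older exponent of $g$ and $1$," which goes the wrong way: for $\alpha'>\beta$ the correction $|x-y_0|^{1+\alpha'}$ is \emph{too small} near $y_0$ and does not absorb the oscillation. The paper avoids this bookkeeping entirely with a multiplicative perturbation: it sets $\psi(x)=\varphi(x)\bigl(1/g(x_0)-C|x-x_0|^{\gamma}\bigr)$ with $\gamma=\beta$, which automatically carries a factor $\varphi(x)=O(|x-x_0|)$ (since $\varphi(x_0)=0$), is differentiable at $x_0$ with $\nabla\psi(x_0)=\nabla\varphi(x_0)/g(x_0)$, and satisfies $\psi\le u_1\cdot(1/g(x_0)-C|x-x_0|^\gamma)\le u_1/g=|U|$ by the H\"older bound on $1/g$; no separate appeal to the Lipschitz bound on $|U|$ is needed.

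One smaller imprecision: you propose applying the boundary Harnack principle directly to the pairs $(u_i^{\pm},u_1)$, but $u_i^{\pm}$ are only subharmonic, not harmonic. As in the remark following Theorem~\ref{bdryharnack}, one must replace them by the harmonic extensions $h_\pm$ of $u_i^{\pm}$ in $\Omega_U\cap B_1$ (which by Remark~\ref{rem:ntapalleconn} are either strictly positive or identically zero in the smaller ball), and then write $u_i=h_+-h_-$. This is exactly what the paper does implicitly by citing that remark.
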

\begin{proof}
First notice that, by Lemma \ref{reiflatprop}, $\Omega_U$ is an NTA domain in a neighborhood of $0$ and there exists $\beta>0$, depending only on the NTA constants, such that for $i=2,\dots, k$,  
$\sfrac{\ds u_i}{\ds u_1}$ is H\"older continuous of order $\beta$ on $\overline{\Om}_U\cap B_r$,
for some $r\leq r_0.$
In particular, for every $x_0\in \Om^{(\sfrac12)}\cap B_r$, the limit $\ \ds g_i(x_0):=\lim_{\Omega\ni x\to x_0}\frac{u_i(x)}{u_1(x)},\ $
exists and $g_i:B_r\cap\partial\Omega\to\R$ is an $\beta$-H\"older continuous function.
Then we have 
$$u_i=g_i u_1\quad\text{on}\quad B_r\cap\overline\Omega\qquad\text{and}\qquad u_1=g|U|\quad\text{on}\quad B_r\cap\overline\Omega,\quad\text{where}\quad g:=\left(1+g_2^2+\dots+g_k^2\right)^{-\sfrac12}.$$
We notice that $g$ is a $\beta$-H\"older continuous function on $\overline\Omega\cap B_r$ for some $\beta>0$ and is such that $c_0\le g\le 1$, where $c_0=1/C_{sign}$ and $C_{sign}$ is the constant from Lemma~\ref{costsign}. Suppose now that the function $\varphi\in C^1(\R^d)$ is touching $u_1$ from below (see Definition \ref{viscoptimality}, note that it is local) in a point $x_0\in\partial\Omega\cap B_r$. For $\rho$ small enough, there is a constant $C>0$ such that  
$$\frac1{g(x)}\ge \frac1{g(x_0)}-C|x-x_0|^\gamma\ge 0\quad\text{for every}\quad x\in \overline\Omega\cap B_{\rho}(x_0),$$
and so, setting $\psi(x)=\varphi(x)\big(\frac1{g(x_0)}-C|x-x_0|^\gamma\big)$, we get that $\psi(x_0)=|U|(x_0)$ and 
$$\psi(x)\le u_1(x)\left(\frac1{g(x_0)}-C|x-x_0|^\gamma\right)\le |U|(x)\quad\text{for every}\quad x\in \overline\Omega\cap B_{\rho}(x_0),$$
that is in the ball $B_{\rho}(x_0)$ we have that $\psi$ touches $|U|$ from below in $x_0$. On the other hand, $\psi$ is differentiable in $x_0$ and $|\nabla \psi(x_0)|=\frac1{g(x_0)}|\nabla \varphi(x_0)|$. Since $U$ is a viscosity solution of \eqref{viscsolUtion} we obtain that 
$$\sqrt\Lambda\ge |\nabla \psi(x_0)|=\frac1{g(x_0)}|\nabla \varphi(x_0)|,$$
which gives the claim, the case when $\varphi$ touches $u_1$ from below being analogous.
\end{proof}

\subsection{Higher regularity. The regular part of the free boundary is $C^\infty$}\label{sub:Cinfty}
Thanks to Lemma~\ref{costsign}, we can apply the arguments of~\cite[Section~5]{mtv} in order to obtain the $C^\infty$ regularity of $\reg$, using the component of locally constant sign provided by Lemma~\ref{costsign} instead of $u_1$ in the boundary Harnack principle~\cite[Lemma~5.12]{mtv}. We recall here the updated statements for the reader's sake.

In order to pass from $C^{1,\alpha}$ to $C^{\infty}$ we need an improved boundary Harnack principle, as it was proved by De Silva and Savin~\cite{dss} for harmonic functions. 

\begin{teo}[Improved boundary Harnack principle]\label{imprbh}
Let $U=(u_1,\dots,u_k)$ be a minimizer for~\eqref{vectfb}, $0\in \reg$ and let the first component be of constant sign in a neighborhood of $0$, that is $u_1>0$ in $B_{r_0}\cap \Omega_U$.
There exists $R_0<\sfrac12$ such that, if for $r<\min{\{R_0,r_0\}}$, $\reg\cap B_r$ is of class $C^{k,\alpha}$ for $k\geq 1$, then for all $i=2,\dots, k$ we have 
\[
\frac{u_i}{u_1}\ \mbox{is of class }C^{k,\alpha} \mbox{ on }\overline{\Om_U}\cap B_r.
\] 
In particular, for every $x_0\in \reg\cap B_r$, the limit $\ \ds g_i(x_0):=\lim_{\Omega_U\ni x\to x_0}\frac{u_i(x)}{u_1(x)},\ $
exists and $g_i:B_r\cap\partial\Omega_U\to\R$ is a $C^{k,\alpha}$ function. 
\end{teo}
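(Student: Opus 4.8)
The plan is to reduce the statement to the scalar improved boundary Harnack principle of De Silva and Savin \cite{dss}, exactly as the proof of the $C^{1,\alpha}$-regularity (Lemma \ref{locoptu1}) reduced matters to the ordinary boundary Harnack principle of \cite{jk}. The key point is that once $\reg\cap B_r$ is known to be of class $C^{k,\alpha}$, the positivity set $\Omega_U\cap B_r$ is a $C^{k,\alpha}$ domain, and each component $u_i$ is harmonic in $\Omega_U\cap B_r$ and vanishes continuously on $\partial\Omega_U\cap B_r$ (this is part of the viscosity formulation in Lemma \ref{optvisc}, and more directly follows from the quasi-minimality of each $u_i$ and interior elliptic estimates away from the free boundary). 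The distinguished component $u_1$ is moreover strictly positive in $B_{r_0}\cap\Omega_U$ by hypothesis, and by Lemma \ref{costsign} it controls the full modulus, $C_{sign}u_1\ge|U|\ge|u_i|$ in $B_r\cap\Omega_U$ after possibly shrinking $r$. Thus for every $i=2,\dots,k$ the pair $(u_1,u_i)$ consists of a positive harmonic function and a (possibly sign-changing) harmonic function, both vanishing on the portion of the $C^{k,\alpha}$ boundary $\partial\Omega_U\cap B_r$.

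First I would record that the improved boundary Harnack principle of \cite{dss}, originally stated for a positive $u_1$ and a positive $u_i$, extends to sign-changing $u_i$ by the same device already used in the paper for the ordinary version: decompose $u_i=h_+-h_-$ into the harmonic extensions of the positive and negative parts of its boundary data on $\partial B_r\cap\Omega_U$, note that by Remark \ref{rem:ntapalleconn} (the domain being NTA, indeed $C^{k,\alpha}$) each of $h_\pm$ is either strictly positive or identically zero on $\Omega_U\cap B_{r/M}$, apply the (positive) improved boundary Harnack principle to the pairs $(u_1,h_+)$ and $(u_1,h_-)$, and subtract. A standard covering argument upgrades this to all of $\overline{\Omega_U}\cap B_r$ after a further harmless shrinking of $r$. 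This yields that $u_i/u_1$ extends to a $C^{k,\alpha}$ function on $\overline{\Omega_U}\cap B_r$; in particular the boundary trace $g_i(x_0):=\lim_{\Omega_U\ni x\to x_0}u_i(x)/u_1(x)$ exists at every $x_0\in\reg\cap B_r$ and $g_i$ is $C^{k,\alpha}$ on $\partial\Omega_U\cap B_r$, which is the assertion of the theorem.

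Then I would address the one subtlety: the statement is phrased for the \emph{first} component $u_1$, while Lemma \ref{costsign} only guarantees \emph{some} component $u_{i_0}$ of constant sign near a given point of $\reg$. Since the theorem explicitly assumes $u_1>0$ in $B_{r_0}\cap\Omega_U$, this is harmless — one simply relabels the components so that the distinguished one is $u_1$ — but I would remark that the $C^{k,\alpha}$ conclusion holds in the form "$u_i/u_{i_0}$ is $C^{k,\alpha}$" for the component $u_{i_0}$ provided by Lemma \ref{costsign}, which is what is actually used in the bootstrap. The existence of $R_0<\sfrac12$ uniform along $\reg$ comes, as in \cite{mtv} and \cite{dss}, from the fact that the $C^{k,\alpha}$ norm of the free boundary and the constants in \eqref{e:BH} do not degenerate under the blow-up $\Omega_r=\tfrac1r\Omega_U$, so the improved boundary Harnack estimate can be applied at a fixed scale after rescaling.

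The main obstacle is the sign-changing nature of $u_i$: the improved boundary Harnack principle of \cite{dss}, like its classical counterpart, is stated for two positive harmonic functions, and the quotient estimate genuinely fails if the numerator is allowed to vanish on an interior portion of $\partial\Omega_U$. The resolution, as indicated above, is the decomposition $u_i=h_+-h_-$ together with the topological dichotomy furnished by Remark \ref{rem:ntapalleconn}; this is exactly the mechanism already deployed for the $C^{1,\alpha}$ step, so it poses no new difficulty here beyond checking that \cite{dss} applies verbatim on the $C^{k,\alpha}$ domain $\Omega_U\cap B_r$. Everything else — harmonicity and boundary vanishing of the $u_i$, the two-sided comparison $C_{sign}^{-1}u_1\le|U|\le$ (say) $k^{1/2}\max_i|u_i|$, and the scale invariance of the constants — is already in place from Lemma \ref{costsign}, Lemma \ref{reiflatprop}, and the remarks following Theorem \ref{bdryharnack}.
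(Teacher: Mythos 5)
Your proposal is correct and follows the same route the paper takes: once $\reg\cap B_r$ is known to be $C^{k,\alpha}$, invoke the De Silva--Savin higher regularity boundary Harnack principle for each pair $(u_1,u_i)$ on the $C^{k,\alpha}$ domain $\Omega_U\cap B_r$, which is exactly what the paper's one-line proof does by citing \cite[Theorem~2.4]{dss} for $k=1$ and \cite[Theorem~3.1]{dss} for $k\ge 2$. One economy worth noting, however: the decomposition $u_i=h_+-h_-$ that you reprise from the NTA setting is superfluous at this stage. The De Silva--Savin theorems require only the \emph{denominator} $u_1$ to be positive; the numerator is allowed to change sign. Indeed on a $C^{1,\alpha}$ domain Hopf's lemma gives $u_1\gtrsim \mathrm{dist}(\cdot,\partial\Omega_U)$, while the up-to-the-boundary gradient estimate gives $|u_i|\lesssim\mathrm{dist}(\cdot,\partial\Omega_U)$ for any harmonic $u_i$ vanishing on $\partial\Omega_U\cap B_r$, so $u_i/u_1$ is already bounded with no sign hypothesis, and the Schauder bootstrap of \cite{dss} then yields the $C^{k,\alpha}$ bound on the ratio directly. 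The $h_\pm$ device was genuinely needed in the $C^{1,\alpha}$ step (Lemma~\ref{locoptu1}), where the domain was only NTA and the Jerison--Kenig theorem requires both functions positive; here it only lengthens the argument.
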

\begin{proof}
In order to show the claim, it is enough to apply~\cite[Theorem~2.4]{dss} for the case $k=1$ and~\cite[Theorem~3.1]{dss} for the case $k\geq 2$. 
\end{proof}

At this point we are in position to prove the full regularity of $\reg$.

\begin{lemma}\label{regularityfinalinfty}
Let $U=(u_1,\dots,u_k)$ be a minimizer for~\eqref{vectfb}, $0\in \reg$ and let the first component be of constant sign in a neighborhood of $0$, that is $u_1>0$ in $B_{r_0}\cap \Omega_U$. Then $\reg$ is locally a graph of a $C^{\infty}$ function.
\end{lemma}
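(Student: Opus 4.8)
The strategy is a bootstrap argument, combining the improved boundary Harnack principle (Theorem \ref{imprbh}) with the known higher-regularity theory for scalar one-phase free boundaries. By Lemma \ref{locoptu1} we already know that $\reg\cap B_r$ is locally the graph of a $C^{1,\alpha}$ function for some $\alpha\in(0,1)$, and that on this portion of the free boundary $u_1$ solves, in the viscosity sense,
\[
-\Delta u_1=0\quad\text{in}\quad\Omega_U\cap B_r,\qquad u_1=0\quad\text{on}\quad\partial\Omega_U\cap B_r,\qquad |\nabla u_1|=g\sqrt\Lambda\quad\text{on}\quad\partial\Omega_U\cap B_r,
\]
with $g=(1+g_2^2+\dots+g_k^2)^{-\sfrac12}$ and $g_i=u_i/u_1$ on the free boundary. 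The point is that $g$ is exactly as regular as the functions $g_i$, and Theorem \ref{imprbh} says that the regularity of the $g_i$ on $\overline{\Omega_U}\cap B_r$ matches the current regularity of the free boundary itself.

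\medskip

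\textbf{The iteration.} First I would fix a point, say $0\in\reg$, and pass to a small ball $B_r$ where $u_1>0$ in $\Omega_U\cap B_r$ and where $\partial\Omega_U\cap B_r$ is a $C^{1,\alpha}$ graph (this is the base case, from Lemma \ref{locoptu1}). Suppose inductively that $\reg\cap B_r$ is of class $C^{k,\alpha}$ for some $k\ge1$. Then Theorem \ref{imprbh} gives that each quotient $u_i/u_1$ extends to a $C^{k,\alpha}$ function on $\overline{\Omega_U}\cap B_r$, hence so does $g$; since moreover $c_0\le g\le1$ by Lemma \ref{costsign}, the function $g\sqrt\Lambda$ is a strictly positive $C^{k,\alpha}$ function on $\partial\Omega_U\cap B_r$. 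Now $u_1$ is a solution (viscosity, hence — being already on a $C^{1,\alpha}$ domain — a classical/weak solution) of the one-phase Bernoulli problem with $C^{k,\alpha}$ Neumann-type data $|\nabla u_1|=g\sqrt\Lambda$. By the higher-order regularity theory for the one-phase free boundary with smooth right-hand side (a hodograph/partial-hodograph transform argument, or the elliptic system satisfied by $u_1$ after flattening, as in \cite{mtv} following \cite{kn} and \cite{dss}), the free boundary $\reg\cap B_{r'}$ is of class $C^{k+1,\alpha}$ on a possibly smaller ball $B_{r'}$. This closes the induction: $\reg$ is of class $C^{k,\alpha}$ for every $k$, hence $C^\infty$. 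Iterating once more with real-analytic elliptic regularity would even give analyticity, but $C^\infty$ is all that is claimed.

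\medskip

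\textbf{The main obstacle.} The delicate point is keeping the bootstrap honest: at each step the radius of the ball on which one controls the regularity may shrink, so one must check that after countably many steps there is still a fixed neighborhood of $0$ on which $\reg$ is $C^\infty$. This is handled by the standard observation that $C^\infty$ regularity is a local and open statement — it suffices to prove that each point of $\reg$ has \emph{some} neighborhood in which the free boundary is $C^{k,\alpha}$ for every $k$, and the shrinking can be arranged to be summable (e.g.\ pass to $B_{r/2}$ at each of the first finitely many steps and then stabilize, or simply note that $C^{k+1,\alpha}$ regularity in $B_{r/2}$ follows from $C^{k,\alpha}$ regularity in $B_r$ with a radius loss bounded below by a fixed fraction). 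A second, more technical, point is that Theorem \ref{imprbh} as stated requires $\reg\cap B_r$ to be $C^{k,\alpha}$ in order to conclude $u_i/u_1\in C^{k,\alpha}$; this is exactly the inductive hypothesis, so the two ingredients — improved boundary Harnack and one-phase higher regularity — feed into each other cleanly, each gaining one derivative. Since the analogous argument was carried out in \cite[Section~5]{mtv} under the a priori sign assumption on $u_1$, and Lemma \ref{costsign} supplies precisely such a component near every regular point, the proof here amounts to invoking that argument verbatim with the constant-sign component in place of $u_1$; I would simply state this and refer to \cite{mtv}.
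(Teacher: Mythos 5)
Your proof is correct and follows essentially the same bootstrap argument as the paper: the base case $C^{1,\alpha}$ from Lemma~\ref{locoptu1}, the inductive step combining the improved boundary Harnack principle of Theorem~\ref{imprbh} (to upgrade $g$ to $C^{k,\alpha}$) with Kinderlehrer--Nirenberg higher regularity \cite{kn} (to upgrade $\reg$ to $C^{k+1,\alpha}$). Your additional remarks about controlling the loss of radius along the iteration and about analyticity are sound observations that the paper leaves implicit.
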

\begin{proof}
The smoothness of the free boundary follows by a bootstrap argument as in \cite{kn}. 
Let us assume that $\reg$ is locally $C^{k,\alpha}$ regular for some $k\geq 1$, the case $k=1$ being true thanks to Section~\ref{sub:C1alpha}. We will prove that $\reg$ is locally $C^{k+1,\alpha}$. By Lemma~\ref{locoptu1} the first component $u_1$ is locally a (classical) solution to the problem
\begin{equation*}
\Delta u_1=0\quad\mbox{in}\quad\Omega_U\ , \qquad u_1=0\quad \mbox{on}\quad  \reg\ ,\qquad |\nabla u_1|=g\sqrt\Lambda \quad \mbox{on}\quad \reg.
\end{equation*}
Now thanks to Lemma~\ref{imprbh} and the definition of $g$ we have that $g$ is a $C^{k,\alpha}$ function. Now by \cite[Theorem 2]{kn} we have that $\reg$ is locally a graph of a $C^{k+1,\alpha}$ function, and this concludes the proof.
\end{proof}


\section{Structure of the branching free boundary}\label{s:branch}
In this section we study in more detail the set of branching points $\2$. By the results of Subsection \ref{sub:BUclassification} we know that for a $x_0\in \partial\Omega_U$ we have
$$x_0\in \2\ \Leftrightarrow\  x_0\in\Omega_U^{(1)}\ \Leftrightarrow\  \text{Every blow-up $U_0\in\mathcal{BU}_U(x_0)$ is a linear function.}$$
In the following Subsection  \ref{sub:rank} we prove that the rank of $U_0$ depends only on $x_0$. Then, in Subsection \ref{sub:dim} we stratify the singular set according to the rank at each point and finally, in the last subsection, we give some measure theoretical criterion for the uniqueness of the blow-up.
\subsection{Definition of the strata and decomposition of $\2$}\label{sub:rank}
\begin{lemma}\label{l:QU}
Let $U=(u_1,\dots,u_k)$ be a solution of \eqref{vectfb} and $Q\in \mathcal{O}(k)$ be an orthogonal matrix. Then $V:=QU$ is also a solution of \eqref{vectfb} corresponding to the boundary datum $Q\Phi$.
\end{lemma}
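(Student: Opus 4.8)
The plan is to exploit the fact that the functional in \eqref{vectfb} depends on $U$ only through the two quantities $|\nabla U|^2=\sum_i|\nabla u_i|^2$ and $\Omega_U=\{|U|>0\}$, both of which are invariant under the action of the orthogonal group $\mathcal O(k)$ acting on the target $\R^k$. First I would record the pointwise identities: since $Q$ is orthogonal, $|QU(x)|=|U(x)|$ for every $x\in D$, so $\Omega_{QU}=\Omega_U$ and in particular $|\Omega_{QU}|=|\Omega_U|$; and $\nabla(QU)=Q\nabla U$ (applying $Q$ row-wise to the Jacobian), whence $|\nabla(QU)|^2=\sum_{j}|\sum_i q_{ji}\nabla u_i|^2=\trace\big((\nabla U)^T Q^T Q\,\nabla U\big)=\trace\big((\nabla U)^T\nabla U\big)=|\nabla U|^2$ by $Q^TQ=I$. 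Hence the energy is literally unchanged: $\int_D|\nabla(QU)|^2+\Lambda|\Omega_{QU}|=\int_D|\nabla U|^2+\Lambda|\Omega_U|$.

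Next I would check that $V=QU$ is an admissible competitor for the boundary datum $Q\Phi$: since $U\in H^1(D;\R^k)$ and $Q$ is a constant linear map, $V\in H^1(D;\R^k)$, and $U=\Phi$ on $\partial D$ in the trace sense gives $V=QU=Q\Phi$ on $\partial D$. The map $W\mapsto QW$ is a bijection from the admissible class for datum $\Phi$ onto the admissible class for datum $Q\Phi$, with inverse $W\mapsto Q^{-1}W=Q^TW$, and by the computation above it preserves the value of the functional. Therefore it carries minimizers to minimizers: if $U$ solves \eqref{vectfb} with datum $\Phi$, then for any admissible $\widetilde V$ with datum $Q\Phi$ we may write $\widetilde V=Q\widetilde U$ with $\widetilde U=Q^T\widetilde V$ admissible for $\Phi$, and then
\[
\int_D|\nabla V|^2+\Lambda|\Omega_V|=\int_D|\nabla U|^2+\Lambda|\Omega_U|\le \int_D|\nabla\widetilde U|^2+\Lambda|\Omega_{\widetilde U}|=\int_D|\nabla\widetilde V|^2+\Lambda|\Omega_{\widetilde V}|,
\]
so $V$ is optimal for the datum $Q\Phi$.

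This argument is essentially routine; there is no real obstacle, only bookkeeping. The one point that deserves a line of care is the identity $|\nabla(QU)|^2=|\nabla U|^2$ at the level of $H^1$ functions rather than smooth ones: it is immediate because $Q$ has constant entries, so $\partial_\alpha(QU)_j=\sum_i q_{ji}\partial_\alpha u_i$ as weak derivatives, and then the $\trace$ computation above is purely algebraic and holds a.e. in $D$. Likewise $\{|QU|>0\}=\{|U|>0\}$ as sets, so their Lebesgue measures coincide with no subtlety. I would present the proof in exactly this order: pointwise invariance of $|U|$ and $|\nabla U|^2$, hence invariance of the functional; admissibility of $Q\Phi$ and bijectivity of $W\mapsto QW$ between the two admissible classes; conclusion that minimality is preserved.
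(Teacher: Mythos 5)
Your proof is correct and is essentially the same as the paper's, which simply observes that $|QU|=|U|$ and $|\nabla(QU)|^2=|\nabla U|^2$; you have only filled in the routine bookkeeping (the trace identity, admissibility, and the bijection between the two admissible classes) that the paper leaves implicit.
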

\begin{proof}
It is sufficient to notice that for every $U:D\to\R^k$ we have $|QU|=|U|$ and $|\nabla (QU)|^2=|\nabla U|^2$.
\end{proof}
\begin{lemma}\label{l:rank}
Let $U=(u_1,\dots,u_k)$ be a solution of \eqref{vectfb} and $x_0\in\Omega_U^{(1)}\cap\partial\Omega_U$. Then every blow-up $U_0\in \mathcal{BU}_U(x_0)$ is a linear function given by a matrix $A\in M^{d\times k}(\R)$, whose rank does not depend on $U_0$ but only on $x_0$ and $U$. 
\end{lemma}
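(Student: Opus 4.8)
The plan is to show that if $U_0, U_1 \in \mathcal{BU}_U(x_0)$ are two blow-up limits at the same point $x_0$, written as $U_0(x) = A_0 x$ and $U_1(x) = A_1 x$ for matrices $A_0, A_1 \in M^{d\times k}(\R)$, then $\operatorname{rank} A_0 = \operatorname{rank} A_1$. The key observation is that all blow-up limits at $x_0$ have the same Weiss energy density, namely $W(U,x_0,0) = \Lambda\omega_d$ (since $x_0 \in \Omega_U^{(1)}$, by Remark \ref{rem:density}), and moreover by Remark \ref{rem:density} applied to the linear function $U_0(x)=A_0x$ one computes $W(U_0,1,0) = \Lambda|\Omega_{U_0}\cap B_1| = \Lambda\omega_d$, which is consistent but does not by itself pin down the rank. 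So the rank must be extracted from a finer, rotationally-invariant quantity attached to the blow-up.

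First I would recall that the map $r \mapsto U_{r,x_0}$ together with the strong $H^1_{loc}$ convergence of blow-up sequences (Subsection \ref{sub:blowupexistence}(i)) gives that the Dirichlet energy on any fixed ball passes to the limit; combined with the homogeneity (Remark \ref{r:hm}) this means $\int_{B_1}|\nabla U_0|^2\,dx = \lim_{r\to 0}\frac{1}{r^{d-2}}\int_{B_r(x_0)}|\nabla U|^2\,dx$ is independent of the chosen blow-up. For a linear map $U_0(x)=A_0x$, one has $\int_{B_1}|\nabla U_0|^2 = \|A_0\|\,|B_1|$ in the notation of Lemma \ref{l:lin_sol} (i.e.\ $\|A_0\| = \sum_{ij}a_{ij}^2 = \operatorname{trace}(A_0^\top A_0)$), so the Frobenius norm $\|A_0\|$ is the same for every blow-up at $x_0$; call it $q(x_0)$. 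This alone is still not the rank. The rank comes in because a linear function $U_0(x)=A_0x$ is a global minimizer of \eqref{vectfb} (Remark \ref{r:hm}), and by Lemma \ref{l:lin_sol} and its proof the constraint is sharp: the relevant structural fact is that, after an orthogonal change of coordinates in $\R^k$ (allowed by Lemma \ref{l:QU}) and in $\R^d$, $A_0$ can be brought to a diagonal-type form whose nonzero singular values $\sigma_1\ge\dots\ge\sigma_r>0$ satisfy $\sigma_1^2+\dots+\sigma_r^2 = q(x_0) \ge \Lambda$, while being a \emph{minimizer} (not merely stationary) forces each active "direction" to carry enough gradient — concretely, the per-component analysis in the proof of Lemma \ref{l:lin_sol} shows that on each nonzero row direction the scalar slope must be compatible with the one-phase extremality, so the number $r$ of nonzero singular values is determined by $q(x_0)$ and $\Lambda$. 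The cleanest route is: reduce $A_0$ via SVD, use Lemma \ref{l:QU} to rotate in the target so $A_0$ acts into the coordinate subspace $\R^r\subset\R^k$, observe each of the $r$ nonzero components is (a multiple of) a one-dimensional one-phase solution $x\mapsto \sigma_j (x\cdot\nu_j)$, apply the internal-perturbation / one-phase extremality argument componentwise as in the proof of Lemma \ref{l:bw} Step 3 to get that each active slope equals exactly $\sqrt{\Lambda}$ after normalization, whence $q(x_0) = r\Lambda$ and therefore $r = \operatorname{rank} A_0 = q(x_0)/\Lambda$ depends only on $x_0$ and $U$.

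The main obstacle I anticipate is justifying that each nonzero singular direction of a \emph{linear minimizer} must carry gradient of a prescribed size (so that $\|A_0\| = r\Lambda$ exactly, not merely $\ge \Lambda$). For a single row the proof of Lemma \ref{l:lin_sol} already indicates the one-phase extremality $|\text{slope}| = \sqrt\Lambda$ is forced; the subtlety is that when several rows are simultaneously nonzero, a competitor perturbing one component changes $\Omega_{\tilde U}$ only through the union, so one must argue that in the diagonalized form the supports $\{u_j\neq 0\}$ are "aligned" enough that a one-variable internal perturbation in a region where exactly that component vanishes is admissible and decreases the energy unless the slope is critical. This is precisely the kind of argument carried out in \cite{mtv} and in the proof of Lemma \ref{l:bw}; I would adapt it, performing the perturbation in a small ball centered at an interior point of $\{u_j=0\}\cap\Omega_{U_0}$ (which is nonempty and open when $r\ge 2$ since the half-spaces $\{x\cdot\nu_j>0\}$ are in general position after the SVD reduction), thereby isolating one component at a time. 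Once this componentwise extremality is in hand, $\operatorname{rank} A_0 = \frac{1}{\Lambda}\int_{B_1}|\nabla U_0|^2/|B_1|$ is manifestly blow-up independent, and the lemma follows.
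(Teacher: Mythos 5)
Your route does not close, and the central obstacle is exactly the one you flag: the formula $\operatorname{rank} A_0 = \|A_0\|/\Lambda$ is false for linear global minimizers. Lemma \ref{l:lin_sol} shows that $U_0(x)=Ax$ is a minimizer whenever $\|A\|\geq\Lambda$, with $\|A\|=\Lambda$ being a \emph{threshold}, not a forced identity; every rank-one $U_0(x)=a\,x_1\,e_1$ with $a^2>\Lambda$ is a perfectly good global minimizer, so a rank-$1$ blow-up may have $\|A_0\|/\Lambda$ arbitrarily large. The internal-perturbation argument you invoke to force $\sigma_j=\sqrt\Lambda$ simply does not apply at a density-$1$ point: the nodal set $\{u_j=0\}$ of each active component lies measure-theoretically inside $\Omega_{U_0}$, so shrinking the support of one component does \emph{not} shrink $|\Omega_{\tilde U}|$ (the other components still cover it), and there is no measure gain to balance the Dirichlet loss. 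This is precisely the structural difference between $\reg$ (density $\sfrac12$, extremality $|\nabla|U||=\sqrt\Lambda$ active) and $\2$ (density $1$, no such constraint). In addition, the preliminary step, namely that $\|A_0\|=\omega_d^{-1}\int_{B_1}|\nabla U_0|^2$ is the same for all blow-ups at $x_0$, is not justified: strong $H^1$ convergence gives convergence of the Dirichlet energy only along the chosen subsequence, and the Weiss functional $W_0(U_0,1,0)=\int_{B_1}|\nabla U_0|^2-\int_{\partial B_1}|U_0|^2$ vanishes for \emph{every} one-homogeneous $U_0$ harmonic on its positivity set (integrate by parts, using $\partial_\nu U_0=U_0$ on $\partial B_1$), so the Weiss monotonicity does not pin down $\|A_0\|$.

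The paper's proof avoids both issues by using a different monotone quantity: it applies the Alt--Caffarelli--Friedman monotonicity formula \eqref{acffun} to each component of $V=QU$, where $Q\in\mathcal O(k)$ rotates one blow-up matrix so that its last $k-j$ rows vanish. Monotonicity of $r\mapsto\Phi(r,v_i)$ together with $\Phi(0^+,v_i)=0$ for $i>j$ (forced by the existence of a single blow-up whose $i$-th component vanishes) shows that the $i$-th component of \emph{every} blow-up in $\mathcal{BU}_V(0)$ vanishes, giving rank $\leq j$ for all blow-ups; an induction on $j$ then excludes rank $<j$. The rank is detected by \emph{which} components vanish, not by the \emph{size} of the gradient --- information your norm-based formula cannot see.
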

\begin{proof}
Without loss of generality we may assume that $x_0=0$.
Let $U_0\in \mathcal{BU}_U(0)$, $U_0(x)=Ax$, be a blow-up such that $\text{rank}\,A=j$ for some $j\in\{1,\dots,k\}$. We claim that all the blow-ups in $\mathcal{BU}_U(x_0)$ are of rank $j$.

We first prove the claim in the case $j=1$. Indeed, consider a matrix $Q\in\mathcal O(k)$ such that $QAx=(\nu\cdot x,0,\dots,0)$ for some $\nu\in\R^d$ and consider the vector valued function $V=(v_1,\dots,v_k):=QU$, which is also a solution \eqref{vectfb} by Lemma \ref{l:QU}. Now, since each of the components $v_i$ is a harmonic function on the set $\{v_i\neq0\}$, the Alt-Caffarelli-Friedman monotonicity formula (see \cite{acf}) gives that the function
\begin{equation}\label{acffun}
r\mapsto\Phi(r,v_i):=\left(\frac{1}{r^2}\int_{B_r}\frac{|\nabla v_i^+|^2}{|x|^{d-2}}\,dx\right)\left(\frac{1}{r^2}\int_{B_r}\frac{|\nabla v_i^-|^2}{|x|^{d-2}}\,dx\right)=\int_{B_1}\frac{|\nabla (v_i)_r^+|^2}{|x|^{d-2}}\,dx\int_{B_1}\frac{|\nabla (v_i)_r^-|^2}{|x|^{d-2}}\,dx,
\end{equation}
is increasing in $r$, where as usual $(v_i)_r(x):=\frac1r v_i(rx)$. Now, since for $i\in\{2,\dots,k\}$ the $i^{th}$ component of the blow-up $QA\in \mathcal{BU}_V(0)$ constantly vanishes, we have that $\Phi(0,v_i):=\lim_{r\to 0}\Phi(r,v_i)=0$. In particular, the $i^{th}$ component of any blow-up $V_0\in \mathcal{BU}_V(0)$ should vanish and so, the only non-vanishing component of $V_0$ is the first one (recall that the blow-ups are non-trivial by the non-degeneracy of the solutions of \eqref{vectfb}). Now since $\mathcal{BU}_V(0)=Q(\mathcal{BU}_U(0))$ we obtain that the rank of any blow-up $\mathcal{BU}_U(0)$ is precisely one, which proves our claim. 

Let us now suppose that $2\le j\le k$ and that the claim holds for all $i\in\{1,\dots,j-1\}$. We will now prove the claim for $j$. Reasoning as above, we first find a matrix $Q\in\mathcal O(k)$ such that the last $k-j$ components of $QA$ vanish, that is $(QA)_{j+1}=\dots=(QA)_k=0\in\R^k$. Then, we consider the vector valued function $V=(v_1,\dots,v_k):=QU$ and notice that, for all $i=1,\dots,k$, the function $r\mapsto\Phi(r,v_i)$ is increasing in $r$. As above, the strong $H^1$ convergence of the blow-up sequences implies that $\Phi(0,v_{j+1})=\dots=\Phi(0,v_k)=0$ and that the components $j+1,...,k$ of any blow-up $V_0\in \mathcal{BU}_V(0)$ do vanish identically. Thus, the rank of $V_0$ is at most $j$. On the other hand, since the claim does hold for every $i\in\{1,\dots,j-1\}$, the rank of $V_0$ is precisely $j$, which concludes the proof. \end{proof}

Lemma \ref{l:rank} allows us to define, for every $j\in\{1,\dots,d\}$, the stratum 
\begin{equation}\label{e:S_j}
\mathcal S_j:=\Big\{x_0\in \Omega_U^{(1)}\cap \partial\Omega_U\ :\ \text{every blow-up}\ U_0\in \mathcal{BU}_U(x_0)\ \text{has rank}\ j\Big\}.
\end{equation} 
Again, by Lemma \ref{l:rank}, the singular set $\partial\Omega_U\cap \Omega_U^{(1)}$ can be decomposed as a disjoint union
\begin{equation}\label{e:sing2}
\Omega_U^{(1)}\cap \partial\Omega_U=\bigcup_{j=1}^d\mathcal S_j.
\end{equation} 


\subsection{Dimension of the strata}\label{sub:dim}

In this subsection we give an estimate on the Hausdorff dimension, $\text{dim}_{\mathcal H}$ of the stratum $\mathcal S_j$. The proof is based on a well-known technique in Geometric Measure Theory known as Federer Reduction Principle. 

Given $A\in\R^d$, $0\le s<\infty$ and $0<\delta\le\infty$, we recall the notations 
$$\mathcal H^s_\delta(A)=\frac{\omega_s}{2^s}\inf\Big\{\sum_{i=1}^\infty (\text{diam}\,C_i)^s\ :\ A\subset\bigcup_{i=1}^\infty C_i\ ,\ \text{diam}\,C_i<\delta\Big\}\ ,\qquad \mathcal H^s(A)=\sup_{\delta\ge 0} \mathcal H^s_\delta(A)\ ,$$
$$\text{dim}_{\mathcal H} A=\inf\{s\ge 0\ :\ \mathcal H^s(A)=0\}.$$
It is well known that $\mathcal H^s(A)=0$ if and only if $\mathcal H^s_\infty(A)=0$. The other fact (for a proof we refer to \cite[Proposition 11.3]{giusti}) that we will use is that 
\begin{equation}\label{e:giusti-federer}
\limsup_{r\to 0}\frac{\mathcal H^s_\infty(A\cap B_r(x))}{2^{-s}\omega_s r^s}\ge 1\quad\text{for $\mathcal H^s$ - almost every}\ x\in A.
\end{equation}

\begin{teo}\label{t:stratification}
Let $U:\R^d\supset D\rightarrow\R^k$ be a solution of \eqref{vectfb} and $\mathcal S_j$ be as in \eqref{e:S_j}. If $j=d$, then $\mathcal S_j$ is a discrete subset of $D$. More precisely each point of $\mathcal S_d$ is isolated in $\partial\Omega_U$. If $1\le j<d$, then $\mathcal S_j$ is a set of Hausdorff dimension $\dim_{\mathcal H}\mathcal S_j\le d-j$. 
\end{teo}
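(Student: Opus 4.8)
The plan is to prove Theorem~\ref{t:stratification} by a Federer Reduction argument, exactly in the spirit of the classical dimension-reduction lemma for area-minimizing currents. The setup is built from the following two ingredients already available in the excerpt: first, the compactness and strong $H^1$-convergence of blow-up sequences with variable center (Subsection~\ref{sub:blowupexistence}), which guarantees that limits of rescalings of solutions are again one-homogeneous global solutions; second, the monotonicity of the Weiss energy $W(U,x_0,r)$ (Lemma~\ref{mono_weiss}) together with its identification with the Lebesgue density (Remark~\ref{rem:density}), which provides the semicontinuous "density" function on which the stratification is indexed. I would also use Lemma~\ref{l:rank}, so that the rank of the blow-up is a well-defined function $x_0\mapsto \mathrm{rank}$ on $\Omega_U^{(1)}\cap\partial\Omega_U$, constant along blow-ups.

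First I would treat the top stratum $j=d$. If $x_0\in\mathcal S_d$ then every blow-up $U_0(x)=Ax$ has $\mathrm{rank}\,A=d$, so $\Omega_{U_0}=\R^d$ and the Lebesgue density of $\Omega_U$ at $x_0$ equals $1$ with $W(U,x_0,0)=\Lambda\omega_d$, the maximal possible value of the density. By upper semicontinuity of $x\mapsto W(U,x,0)$ (Lemma~\ref{l:open_closed}) together with the \emph{strict} lower density estimate~\eqref{densestbel}: near any free boundary point the density is at least $\eps_0$ bounded away from zero, and in fact by Lemma~\ref{l:bw} the density at a free boundary point is either $\le 1-\delta$ or exactly $1$; so the set where it equals $1$ with a full-rank linear blow-up is both relatively closed and, by a blow-up/contradiction argument, locally isolated in $\partial\Omega_U$. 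Concretely, if $x_n\to x_0$ were a sequence of distinct points of $\partial\Omega_U$, the blow-ups of $U$ at $x_0$ along scales $r_n:=|x_n-x_0|$ would, by the compactness of Subsection~\ref{sub:blowupexistence}, converge to a linear $U_0=Ax$ of rank $d$; but then $\Omega_{U_0}=\R^d$ and the Hausdorff convergence of $\partial\Omega_{U_n}$ to $\partial\Omega_{U_0}=\emptyset$ near the unit sphere contradicts the presence of the rescaled free boundary points $(x_n-x_0)/r_n$, which lie on the unit sphere. Hence $x_0$ is isolated in $\partial\Omega_U$, and $\mathcal S_d$ is discrete.

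For $1\le j<d$ I would run the Federer dimension-reduction machine. The heart of it is the following "no-higher-invariance at a density point" statement: suppose $\mathcal H^s(\mathcal S_j)>0$ for some $s>d-j$; then by~\eqref{e:giusti-federer} there is a point $x_0\in\mathcal S_j$ where $\limsup_{r\to0}\mathcal H^s_\infty(\mathcal S_j\cap B_r(x_0))/(2^{-s}\omega_s r^s)\ge 1$. Take a sequence $r_n\to 0$ realizing the limsup and pass to a blow-up limit $U_0(x)=Ax$ with $\mathrm{rank}\,A=j$ of $U$ at $x_0$; by the Hausdorff-type convergence of the rescaled singular strata (which follows from the strong $H^1$ and Hausdorff convergence in Subsection~\ref{sub:blowupexistence}, combined with the semicontinuity of the density and the constancy of rank under blow-up, Lemma~\ref{l:rank}), the rescaled sets $(\mathcal S_j - x_0)/r_n$ converge to a set $S_\infty$ with $\mathcal H^s_\infty(S_\infty\cap B_1)\ge 2^{-s}\omega_s$, and every point of $S_\infty$ is a point of density $W(U_0,\cdot,0)=\Lambda\omega_d$, i.e.\ a point whose blow-up is again a rank-$j$ linear function. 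But $U_0(x)=Ax$ is translation-invariant along $\ker A$, which has dimension $d-j$; and at any point $y\notin\ker A$ the Weiss density $W(U_0,y,0)$ is \emph{strictly less} than $\Lambda\omega_d$ — because $U_0$ is not linear at scale near $y$ unless $y\in\ker A$, so by the rigidity in Remark~\ref{r:hm}/\eqref{e:derivataW} the blow-up of $U_0$ at $y$ cannot be the full-density linear one. Thus $S_\infty\subset\ker A$, forcing $\mathcal H^s_\infty(\ker A\cap B_1)\ge 2^{-s}\omega_s$ with $s>d-j=\dim\ker A$, which is impossible. This contradiction gives $\dim_{\mathcal H}\mathcal S_j\le d-j$.

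\textbf{The main obstacle} I anticipate is the verification that the singular strata converge under blow-up in the sense needed to transfer the Hausdorff-density lower bound~\eqref{e:giusti-federer} from $\mathcal S_j$ to the blow-up limit $S_\infty$ — that is, the "upper semicontinuity of the stratum" along rescalings. This requires combining the strong $H^1$ convergence and the Hausdorff convergence of $\overline{\Omega}_n$ and $\Omega_n^c$ from Subsection~\ref{sub:blowupexistence} with the upper semicontinuity of $x\mapsto W(U,x,0)$ and the rank-constancy from Lemma~\ref{l:rank}, and it is the one place where one must be careful rather than formal: one needs that if $y_n\in(\mathcal S_j - x_0)/r_n$ and $y_n\to y$, then $y\in S_\infty$ belongs to the full-density set of $U_0$; conversely that $\mathcal H^s_\infty$ is upper semicontinuous under this convergence, which is where the rescaled covers are used. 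Everything else — the rigidity forcing $S_\infty\subset\ker A$, and the contradiction with $s>d-j$ — is then routine.
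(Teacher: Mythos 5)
Your argument is correct and follows the same Federer dimension--reduction scheme as the paper, but for $1\le j<d$ you take a detour that the paper deliberately avoids. You try to propagate the \emph{stratum} $\mathcal S_j$ under blow-up and land on the difficulty you honestly flag as the ``main obstacle'': upper semicontinuity of the stratum along rescalings, which requires upper semicontinuity of the Weiss density together with rank-constancy and a careful argument that any subsequential limit $S_\infty$ consists of full-density points of $U_0$. This can be made to work (indeed, your observation that $y\notin\ker A$ forces $W(U_0,y,0)<\Lambda\omega_d$ --- in fact $W(U_0,y,0)=-\infty$ when $U_0(y)\neq 0$ --- is correct), but it is not needed. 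The paper's proof sidesteps the issue entirely by the trivial inclusion $\mathcal S_j\subset\partial\Omega_U$: from \eqref{e:giusti-federer} one gets a point $x_0\in\mathcal S_j$ at which $\partial\Omega_U$ has positive upper $\mathcal H^{d-j+\eps}_\infty$-density; the Hausdorff convergence $\partial\Omega_{U_n}\to\partial\Omega_{U_0}$ along the blow-up is \emph{already} part of the compactness package of Subsection~\ref{sub:blowupexistence}(iii); and by upper semicontinuity of $\mathcal H^s_\infty$ under Hausdorff convergence one concludes $\mathcal H^{d-j+\eps}_\infty(\partial\Omega_{U_0}\cap B_1)>0$, a contradiction since $\partial\Omega_{U_0}=\ker A$ has dimension $d-j$. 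No convergence of strata, no density transfer, no semicontinuity of the rank function at the blow-up scale. Your argument is not wrong, but it shoulders a burden (the convergence of rescaled strata) that adds risk and length; the paper's enlargement-to-$\partial\Omega_U$ trick removes that burden at no cost because the blow-up boundary already lies in $\ker A$.

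Your treatment of the top stratum $j=d$ is essentially identical to the paper's: a sequence $x_n\to x_0$ in $\partial\Omega_U$, rescaled by $r_n=|x_n-x_0|$, produces a point $\xi_0\in\partial B_1$ in the zero set of the blow-up $U_0=Ax$, so $\dim\ker A\ge 1$, contradicting rank $d$. The phrasing via Hausdorff convergence of $\partial\Omega_{U_n}$ to $\partial\Omega_{U_0}\cap(B_1\setminus\{0\})=\emptyset$ is equivalent and fine.
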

\begin{proof}
We start with the first claim. Suppose that $x_0\in \mathcal S_d$ and there is a sequence $\partial\Omega_U\ni x_n\to x_0$. Taking $r_n=|x_n-x_0|$, $\xi_n:=(x_n-x_0)/r_n$, $\xi_0=\lim_{n\to\infty}\xi_n$ and a blow-up limit $U_0\in\mathcal{BU}_U(x_0)$ of the sequence $U_{r_n,x_0}$ we obtain that $\xi_0\in\partial B_1$, $U_0(\xi_0)=0$ and so $\dim \text{Ker}\, U_0\ge 1$, which is a contradiction with the definition of $\mathcal S_d$. 

Let now $j<d$. Suppose by contradiction that there is $\eps>0$ and a solution $U$ of \eqref{vectfb} such that $\mathcal H^{d-j+\eps}(\mathcal S_j)>0$. Then, by \eqref{e:giusti-federer}, we get that there is a point $x_0\in \mathcal S_j$ such that 
\begin{equation}\label{e:choice_x_0}
\limsup_{r\to 0}\frac{\mathcal H^{d-j+\eps}_\infty(\partial\Omega_U\cap B_r(x_0))}{r^{d-j+\eps}}\ge \limsup_{r\to 0}\frac{\mathcal H^{d-j+\eps}_\infty(\mathcal S_j\cap B_r(x_0))}{r^{d-j+\eps}}\ge 2^{-(d-j+\eps)}\omega_{d-j+\eps}.
\end{equation}
Now let $r_n\to0$ be a sequence realizing the first limsup above and $U_n=U_{r_n,x_0}$ be a blow-up sequence converging to some $U_0\in\mathcal{BU}_U(x_0)$. In particular, $\partial\Omega_{U_n}$ converges in the Hausdorff distance to $\partial\Omega_{U_0}$. Now, since $\mathcal H^{s}_\infty$ is upper semi-continuous with respect to the Hausdorff convergence of sets, \eqref{e:choice_x_0} gives that 
$$\mathcal H^{d-j+\eps}_\infty\big(\partial\Omega_{U_{0}}\cap B_1\big)\ge \lim_{n\to \infty}\mathcal H^{d-j+\eps}_\infty\big(\partial\Omega_{U_{n}}\cap B_1\big)\ge 2^{-(d-j+\eps)}\omega_{d-j+\eps},$$
which is in contradiction with the fact that $\mathcal H^{d-j+\eps}\big(\partial\Omega_{U_{0}}\cap B_1\big)=0$. 
\end{proof}

\begin{oss}
A more refined argument in the spirit of Naber and Valtorta, essentially based on the Weiss' monotonicity formula and the structure of the blow-up limits, can be used to deduce that the set $\mathcal S_j$ has finite $(d-j)$-dimensional Hausdorff measure.  
For more details on this technique in the context of the free-boundary problems considered in this paper we refer the reader to \cite{ee}. 
\end{oss}

\subsection{A density criterion for the uniqueness of the blow-up limit}\label{sub:crit}
The uniqueness of the blow-up limit is a central question in free boundary problems and is strictly related to the $C^1$-rectifiability of the singular set. It remains a major open question even in the case of the two-phase problem corresponding to the case $k=1$. In this last subsection we give a general criterion for the uniqueness of the blow-up at the singular points, which depends only on the Lebesgue density of the positivity set $\Omega_U$ (see Proposition \ref{p:uniqueness}). Now, even if at this point this criterion by itself is not sufficient for the conclusion, it provides a proof of the fact that the regularity of the singular set only reduces to a control over the measure of the nodal set  $B_r\setminus\Omega_U$. We prove the lemma by choosing a power rate of convergence, but the argument can be carried out under more general assumptions. For example, a logarithmic decay of the density still translates into a decay of the Weiss energy. This, again implies a blow-up uniqueness and a logarithmic rate of convergence (see \cite{esv}). 
In this subsection we use the notations $W(U,r):=W(U,0,r)$ and $W_0(U,r):=W_0(U,0,r)$, where
$$W_0(U,x_0,r)=\frac{1}{r^d}\int_{B_r(x_0)}|\nabla U|^2\,dx-\frac{1}{r^{d+1}}\int_{\partial B_r(x_0)}|U|^2\,d\HH^{d-1}.$$
\begin{prop}\label{p:uniqueness}
Suppose that $U$ is a solution of \eqref{vectfb} and $x_0\in\partial\Omega_U$. If there are constants $C>0$ and $\alpha>0$ such that 
$$\frac{|B_r(x_0)\setminus\Omega_U|}{r^d}\le Cr^{\alpha}\quad\text{for every}\quad 0<r<\text{dist}\,(x_0,\partial D),$$
then there is a unique blow-up $U_0\in\mathcal{BU}_U(x_0)$ and we have the estimate $\|U_{r,x_0}-U_0\|_{L^2(\partial B_1)}\le Cr^\beta$ for some $\beta=\beta(\alpha,d)$.
\end{prop}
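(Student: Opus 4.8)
The strategy is the classical Weiss-energy-decay approach to blow-up uniqueness: show that the density hypothesis forces the Weiss energy $W(U,x_0,r)$ to converge to its limit $W(U,x_0,0)$ at a power rate, then convert this energy decay into a power-rate $L^2(\partial B_1)$ Cauchy estimate for the rescalings $U_{r,x_0}$. Throughout we may assume $x_0=0$. The starting point is the differential inequality from Lemma~\ref{mono_weiss}, namely
\[
\frac{d}{dr}W(U,r)\ge \frac{1}{r^{d+2}}\sum_{i=1}^k\int_{\partial B_r}|x\cdot\nabla u_i - u_i|^2\,d\HH^{d-1},
\]
whose right-hand side controls $\frac{d}{dr}\|U_{r}\|_{L^2(\partial B_1)}^2$ (this is the standard identity $\partial_r U_r = r^{-2}(x\cdot\nabla U - U)$ restricted to $\partial B_1$, so that $\|\partial_r U_r\|_{L^2(\partial B_1)}^2$ appears, up to the Cauchy-Schwarz step, on the right). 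Hence it suffices to prove $W(U,r)-W(U,0)\le Cr^{\gamma}$ for some $\gamma>0$.

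**Main steps.** First I would bound $W(U,r)$ from above using the density hypothesis. Writing $W(U,r)=W_0(U,r)+\Lambda\omega_d^{-1}\,|\Omega_U\cap B_r|/r^d$ and recalling from Remark~\ref{rem:density} that $W(U,0)=\Lambda\omega_d\gamma_0$ where $\gamma_0=\lim_{r\to0}|\Omega_U\cap B_r|/|B_r|$ is the Lebesgue density, the measure term satisfies
\[
\frac{\Lambda}{\omega_d}\frac{|\Omega_U\cap B_r|}{r^d} = \Lambda\omega_d\gamma_0 + \frac{\Lambda}{\omega_d}\frac{|\Omega_U\cap B_r|-\gamma_0|B_r|}{r^d},
\]
and the hypothesis $|B_r\setminus\Omega_U|\le Cr^{d+\alpha}$ controls this error by $C r^{\alpha}$ precisely in the branching case $\gamma_0=1$; since the density criterion is designed exactly for branching points (where $\Omega_U^{(1)}$ holds and $W(U,0)=\Lambda\omega_d$), we have $|\Omega_U\cap B_r|-|B_r| = -|B_r\setminus\Omega_U|$ with no sign ambiguity. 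For the $W_0$ part one uses that the blow-up limit $U_0$ is linear, $U_0(x)=Ax$, so $W_0(U_0,r)\equiv 0$; combining with the monotonicity this will give $W_0(U,r)\le o(1)$, but one needs the quantitative version. The cleanest route is: since $W(U,r)$ is monotone and bounded, set $\omega(r):=W(U,r)-W(U,0)\ge 0$; the estimate above gives $W_0(U,r)\le \omega(r)+Cr^\alpha - (\text{nonneg.})$, and one integrates the monotonicity inequality to get
\[
\int_r^{2r}\|\partial_\rho U_\rho\|_{L^2(\partial B_1)}^2\,\rho\,d\rho \le \omega(2r)-\omega(r)\le \omega(2r).
\]

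**Closing the estimate.** The delicate point, and the main obstacle, is upgrading ``$\omega(r)\to0$'' to ``$\omega(r)\le Cr^\gamma$''. This is where the density rate is genuinely used a second time, together with an \emph{epiperimetric-inequality-type} or direct comparison argument: one compares the energy of $U_r$ on $B_1$ with that of its one-homogeneous extension of the boundary trace, or — avoiding epiperimetric inequalities altogether — one exploits that at a branching point the competitor built from the linear blow-up is very efficient, so that $W(U,r)-W(U,0)$ is controlled by the measure defect $|B_r\setminus\Omega_U|/r^d\le Cr^\alpha$ directly, via the same competitor/optimality computation that proves the Weiss monotonicity. Concretely, testing optimality of $U$ on $B_r$ against the one-homogeneous extension $z\mapsto |z|\,U(rz/|z|)$ of the boundary values, together with the Rayleigh-quotient structure on $\partial B_1$, yields an inequality of the form $\omega(r)\le C\big(r^\alpha + \text{(spectral gap)}\cdot\|U_r - (\text{its 1-hom. extension})\|^2\big)$; the spectral gap on the sphere (first eigenvalue $d-1$ being simple in the relevant stratum) then absorbs the last term and closes a Gronwall-type iteration, giving $\omega(r)\le Cr^\gamma$ with $\gamma=\gamma(\alpha,d)>0$. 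Once $\omega(r)\le Cr^\gamma$ is in hand, the Cauchy estimate follows by the usual dyadic summation:
\[
\|U_{r}-U_{2r}\|_{L^2(\partial B_1)}\le \int_r^{2r}\|\partial_\rho U_\rho\|_{L^2(\partial B_1)}\,d\rho \le \Big(\int_r^{2r}\!\!\rho\,\|\partial_\rho U_\rho\|^2\,d\rho\Big)^{1/2}\Big(\int_r^{2r}\!\!\frac{d\rho}{\rho}\Big)^{1/2}\le C\,\omega(2r)^{1/2}\le Cr^{\gamma/2},
\]
and summing over dyadic scales gives a unique limit $U_0$ with $\|U_{r,x_0}-U_0\|_{L^2(\partial B_1)}\le Cr^\beta$, $\beta=\gamma/2$. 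The hardest part is genuinely the production of the power rate for the Weiss energy; everything after that is the routine telescoping argument, and everything before it is bookkeeping with the density hypothesis.
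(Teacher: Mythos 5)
Your overall strategy (power decay of the Weiss energy at $x_0$, followed by the standard dyadic telescoping to produce a Cauchy sequence and hence a unique blow-up) is indeed the one the paper uses, and your bookkeeping of the measure term via the density hypothesis is essentially right. But the step you yourself flag as ``the hardest part'' --- producing the power rate $W(U,r)-W(U,0)\le Cr^\gamma$ --- is exactly where your sketch does not close, and where the paper's mechanism differs from what you describe.

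Your two proposed routes both miss the actual lever. Testing optimality of $U$ against the one-homogeneous extension $Z$ of its own trace is precisely the computation behind the Weiss derivative formula and gives only monotonicity, not a multiplicative gap; ``Rayleigh-quotient structure on $\partial B_1$'' and ``spectral gap'' are gestures in the right direction but you never specify a competitor that produces a quantitative inequality, and the assertion that the first spherical eigenvalue $d-1$ is ``simple'' is false (its eigenspace is the $d$-dimensional space of linear functions). Likewise, ``the competitor built from the linear blow-up'' cannot be used directly since it does not match $U$ on $\partial B_r$. What the paper does instead is quite clean and works entirely at the level of $W_0$ (the boundary-adjusted Dirichlet energy, no measure term): introduce the \emph{harmonic extension} $H$ of $U|_{\partial B_r}$ inside $B_r$, and use the two inequalities
\[
W_0(U,r)\ \le\ W_0(H,r)+\Lambda\,r^{-d}|B_r\setminus\Omega_U|
\qquad\text{and}\qquad
(1+\eps)\,W_0(H,r)\ \le\ W_0(Z,r)
\]
for a dimensional $\eps>0$; the first comes from minimality of $U$ and $\Omega_H=B_r$, and the second is a purely harmonic-function estimate (\cite[Lemma 2.5]{sv}) encoding the multiplicative gap between the Dirichlet energy of the harmonic and of the one-homogeneous extension of the same trace. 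Plugging both into the identity $\partial_r W_0(U,r)\ge\tfrac{d}{r}\big(W_0(Z,r)-W_0(U,r)\big)$ and using the hypothesis $r^{-d}|B_r\setminus\Omega_U|\le Cr^\alpha$ yields the Gronwall-type differential inequality $\partial_r W_0(U,r)\ge \tfrac{d\eps}{r}W_0(U,r)-C'r^{\alpha-1}$, which integrates (with the choice $\eps=\alpha/2d$) to $W_0(U,r)\le C_{U,x_0}r^{\alpha/2}$, and hence $W(U,r)-\Lambda\omega_d\le W_0(U,r)\le C_{U,x_0}r^{\alpha/2}$. That gives the required power decay, after which your telescoping paragraph is fine. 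So the gap in your proposal is concrete: you never obtain the multiplicative energy gap, and the mechanism you propose for it (spectral gap absorbed into a Gronwall iteration, or a direct competitor from the linear blow-up) is not made to work and does not correspond to the paper's argument.

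A minor additional correction: in your decomposition of $W$ you wrote the measure term as $\Lambda\omega_d^{-1}|\Omega_U\cap B_r|/r^d$; from the definition \eqref{weiss_fun} it is $\Lambda\,r^{-d}|\Omega_U\cap B_r|$, so the factor $\omega_d^{-1}$ should not be there.
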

\begin{proof}
Let $x_0=0$ and $r>0$ be fixed. Let $H:B_r\to\R^k$ be the harmonic extension of $U$ in the ball $B_r$. A classical estimate for harmonic functions (see \cite[Lemma 2.5]{sv}) states that there is a dimensional constant $\bar\eps>0$ such that 
\begin{equation}\label{e:harmonic}
(1+\eps)W_0(H,r)\le W_0(Z,r)\quad\text{for every}\quad \eps\in[0,\bar\eps],
\end{equation}
where $Z$ is the one-homogeneous extension of $U$ in the ball $B_r(x_0)$.
On the other hand, $|B_r\setminus\Omega_H|=0$ and so, the optimality of $U$ gives 
\begin{equation}\label{e:UvsH}
W_0(U,r)\le W_0(H,r)+r^{-d}\Lambda |B_r\setminus\Omega_U|.
\end{equation}
Finally, we notice that for every function $U$ we have the formula 
\begin{equation}\label{e:derivataW0}
\frac{\partial}{\partial r}W_0(U,r)=\frac{d}{r}\left(W_0(Z,r)-W_0(U,r)\right)+\frac{1}{r^{d+2}}\sum_{i=1}^k\int_{\partial B_r} |x\cdot\nabla u_i-u_i|^2\,d\HH^{d-1}.
\end{equation}
Now, using \eqref{e:derivataW0}, \eqref{e:harmonic} and \eqref{e:UvsH}, we have  
\begin{align*}
\frac{\partial}{\partial r}W_0(U,r)&\ge \frac{d}{r}\left(W_0(Z,r)-W_0(U,r)\right)\ge \frac{d}{r}\left(W_0(Z,r)-W_0(H,r)-r^{-d}\Lambda |B_r\setminus\Omega_U|\right)\\
&\ge \frac{d}{r}\left(\eps W_0(H,r)-r^{-d}\Lambda |B_r\setminus\Omega_U|\right)\ge \frac{d}{r}\left(\eps W_0(U,r)-(1+\eps)r^{-d}\Lambda |B_r\setminus\Omega_U|\right)\\
&\ge \frac{d\eps}{r} W_0(U,r)-2d\Lambda C r^{\alpha-1}.
\end{align*}
In particular, this implies that the function 
$$r\mapsto \frac{W_0(U,r)}{r^{\eps d}}+\frac{2d\Lambda C}{\alpha-d\eps} r^{\alpha-d\eps}$$
is increasing in $r$ and so, choosing $\eps=\frac{\alpha}{2d}$, we get that there is a constant $C_{U,x_0}$ depending on $U$ and the point $x_0=0\in D$ such that 
$$W_0(U,r)\le C_{U,x_0} r^{\alpha/2}\qquad\text{and}\qquad W(U,r)-\Lambda\omega_d=W_0(U,r)-\Lambda \frac{|B_r\setminus\Omega_U|}{r^d}\le W_0(U,r)\le C_{U,x_0} r^{\alpha/2}.$$
Now, the uniqueness of the blow-up and the convergence rate follow by a standard argument (see \cite{sv}).  
\end{proof}


\begin{thebibliography}{999}



	
	\bibitem{altcaf} H.W.~Alt, L.A.~Caffarelli, Existence and regularity for a minimum problem with free boundary, J. Reine Angew. Math. {\bf325} (1981), 105--144.	
	
	\bibitem{acf} H.W.~Alt, L.A.~Caffarelli, A.~Friedman, Variational problems with two phases and their free boundaries, Trans. Amer. Math. Soc. {\bf282} (1984), no. 2, 431--461. 
	
	
	\bibitem{asw} J.~Andersson, H.~Shahgholian, G.S.~Weiss, A variational linearization technique in free boundary problems applied to a two-phase Bernoulli problem, preprint.

	
	
	
	
 
	
	
	
	\bibitem{bmpv} D.~Bucur, D.~Mazzoleni, A.~Pratelli, B.~Velichkov, Lipschitz regularity of the eigenfunctions on optimal domains, Arch. Ration. Mech. Anal. {\bf 216} (1) 117--151 (2015).
	
	
	\bibitem{cjk} L.~A.~Caffarelli, D.S.~Jerison, C.E.~Kenig, Global energy minimizers for free boundary problems and full regularity in three dimensions, Contemp. Math., {\bf 350} Amer. Math. Soc., Providence RI (2004), 83--97.
	
	
	\bibitem{csy} L.~A.~Caffarelli, H.~Shahgholian, K.~Yeressian, A minimization problem with free boundary related to a cooperative system, preprint arXiv:1608.07689.
	
	
	

	
	
	

\bibitem{det}G. David, M. Engelstein, and T. Toro. Free boundary regularity for almost-minimizers. (2017) arXiv:1702.06580.	
	
\bibitem{desilva}D.~De Silva, Free boundary regularity from a problem with right hand side, Interfaces and Free Boundaries, {\bf13} (2) (2011), 223--238.
    
   \bibitem{dsj} D.~De~Silva, D.~Jerison, A singular energy minimizing free boundary,  J. Reine Angew. Math. {\bf 635} (2009), 1--21.
    
    \bibitem{dss} D.~De~Silva, O.~Savin, A note on higher regularity boundary Harnack inequality,   Discrete and Continuous Dynamical Systems -- Series A, {\bf35} (12) (2015), 6155--6163.
    
    \bibitem{ee} N.~Edelen, M.~Engelstein, Quantitative stratification for some free boundary problems, preprint arXiv:1702.04325.
    
    
    \bibitem{esv}
M.~Engelstein, L.~Spolaor, and B.~Velichkov, Uniqueness of the blow-up at isolated singularities for the Alt-Caffarelli functional. ArXiv e-Prints (2018).

	\bibitem{giusti} E.~Giusti, Minimal surfaces and functions of bounded variations, Birkh\"auser, Boston (1984). 
	
	
	
	
	\bibitem{jk} D.S.~Jerison, C.E.~Kenig, Boundary behavior of harmonic functions in nontangentially accessible domains, Adv. Math. {\bf 46} (1), (1982) 80--147.
	
	\bibitem{js} D.S.~Jerison, O.~Savin, Some remarks on stability of cones for the one phase free boundary problem, Geom. Funct. Anal. {\bf 25} (2015), 1240--1257.


	\bibitem{kt} C.E.~Kenig, T.~Toro, Free boundary regularity for harmonic measures and Poisson kernels, Ann. of Math., {\bf 150} (1999), 369--454.
	
	\bibitem{kt1} C.E.~Kenig, T.~Toro, Harmonic measure on locally flat domains, Duke Math. J. {\bf 87} (3), (1997) 509--551.
	
	\bibitem{kn} D.~Kinderlehrer, L.~Nirenberg, Regularity in free boundary problems, Ann. Scuola Norm. Sup. Pisa (4) {\bf 4} (1977), 373--391.
	
		\bibitem{kl} D.~Kriventsov, F.~H.~Lin, Regularity for shape optimizers: the nondegenerate case, preprint arXiv:1609.02624.
	
	\bibitem{kl1} D.~Kriventsov, F.~H.~Lin, Regularity for shape optimizers: the degenerate case, preprint arXiv:1710.00451.

	
	

	
	\bibitem{mtv} D.~Mazzoleni, S.~Terracini, B.~Velichkov, Regularity of the optimal sets for some spectral functionals, Geom. Funct. Anal. {\bf27} (2) (2017), 373--426.
	
	\bibitem{nv1} A. Naber and D. Valtorta. Rectifiable-Reifenberg and the regularity of stationary and minimizing harmonic maps. Annals of Mathematics {\bf 185}, 1--97, 2017.
	
	\bibitem{nv2} A. Naber and D. Valtorta. Stratification for the singular set of approximate harmonic maps. 2016. arXiv:1611.03008.
	
	\bibitem{sv} L.~Spolaor, B.~Velichkov, An epiperimetric inequality for the regularity of some free boundary problems: the $2$-dimensional case, Comm. Pure Appl. Math., to appear.
	
	
	\bibitem{weiss99} G.~S.~Weiss, Partial regularity for a minimum problem with free boundary, J. Geom. Anal., {\bf 9} (2) (1999), 317--326.
	
\end{thebibliography}
\end{document}